\newcommand{\CP}{\mathbb{CP}}
\newcommand{\la}{\langle}
\newcommand{\ra}{\rangle}
\newcommand{\x}{\times}
\newcommand{\lrightarrow}{\longrightarrow}
\newcommand{\SC}{{\mathcal{C}}}
\newcommand{\ZZ}{\mathbb{Z}}
\newcommand{\CC}{\mathbb{C}}
\newcommand{\RR}{\mathbb{R}}
\DeclareMathOperator{\Id}{Id}
\numberwithin{equation}{section}
\theoremstyle{plain}
\newtheorem{proposition}{Proposition}[section]
\newtheorem{theorem}[proposition]{Theorem}
\theoremstyle{definition}
\newtheorem{definition}[proposition]{Definition}
\theoremstyle{remark}
\newtheorem{remark}[proposition]{Remark}
\newcommand{\PP}{\mathbb{P}}
\newcommand{\SU}{\operatorname{SU}}
\newcommand{\bt}{\mathbf{t}}
\newcommand{\ox}{\otimes}
\newcommand{\orb}{\mathrm{orb}}
\theoremstyle{plain}
\theoremstyle{definition}
\newtheorem{example}[proposition]{Example}
\theoremstyle{remark}
\newtheorem{question}{Question}
\newenvironment{dedication}
    {\vspace{6ex}\begin{quotation}\begin{center}\begin{em}}
    {\par\end{em}\end{center}\end{quotation}}
\title{Some topics in Sasakian geometry, a survey}
\author[A. Tralle]{Aleksy Tralle}
\address{Faculty of Mathematics and Computer Science,\\
University of Warmia and Mazury\\
 S\l\/oneczna 54, 10-710 Olsztyn, Poland}
\email{tralle@matman.uwm.edu.pl}
\subjclass[2010]{53C25, 53D35, 14J27, 14J17}
\keywords{Sasakian, Smale-Barden manifold, Seifert bundle, singular surface}
\begin{document}
\maketitle

\begin{dedication}
To Vladimir Rovenski's 70-th birthday
\end{dedication}

\begin{abstract} In the seminal book of Boyer and Galicki {\it Sasakian Geometry} the authors formulated a research program of studying topological properties and answering questions about the  existence of Sasakian structures. We survey recent progress in this topic.
\end{abstract}

\section{Introduction and preliminaries}\label{sec:intro}
 Sasakian 
geometry has become an important and active subject, especially after the appearance of 
the fundamental treatise of Boyer and Galicki \cite{BG}. The Chapter 7 of this book 
contains an extended discussion on the topological problems in the theory of Sasakian, 
and, more generally, $K$-contact manifolds. This article is a survey based on recent results \cite{BBMT, BFMT, CMST, CMRV, HT, M,M1, MRT, MT, MT1, MST1}. The aim of this survey is to attract the attention of researchers to the theory of Seifert bundles over cyclic symplectic or K\"ahler orbifolds which may be of importance for the development of Sasakian geometry. The foundations of this theory were laid by Koll\'ar \cite{K,K1}. However, in its original version, it imposed  restrictions of smoothness on the base cyclic orbifolds. Papers \cite{CMST,M,M1, MST1} contain further development of Koll\'ar's theory which includes non-smooth orbifolds. This enabled the authors to solve several open problems posed in \cite{BG}. Also, we include into this survey several results on rational homotopy properties of Sasakian manifolds \cite{BBMT, BFMT,CNY,CNMY, HT, MT1} and on the classification of Smale-Barden manifolds with Sasakian strcutures \cite{MT}. These results answer questions posed in \cite{BG}. It should be mentioned that  results on rational homotopy properties of Sasakian manifolds use  Tievsky's model (in the sense of rational homotopy theory) \cite{Tievsky}. This construction is contained in the Ph. D. thesis of Tievsky and may be not well known. In this survey we attract the attention of the reader to the use of this model, which may be of importance in establishing new topological properties of Sasakian manifolds. It should be mentioned that although we concentrate on the particluar articles mentioned above, they are inspired and motivated by results of Boyer, Galicki  and their co-authors \cite{BG,BGK, BGM, BGM1, BGN, BN}.  Also, it would be interesting to think over a more general approach to "Sasaki-like" structures developed by Patra and Rovenski in \cite{PV} in the framework of this article.

Let us mention that we freely use the basics of symplectic topology \cite{McD}, the theory of compact complex surfaces \cite{BPV}, complex and symplectic surgery \cite{GS} and some facts from algebraic geometry \cite{H}. All the required material is contained in these sources, but we do not discuss it.   

Finally, note that  we review known results and proofs.

The structure of this survey is as follows.
\begin{enumerate}
\item In Section \ref{sec:intro} we define Sasakian and K-contact manifolds and formulate Rukimbira's theorem \cite{R} which enables us to consider these manifolds as total spaces of Seifert bundles over singular cyclic orbifolds. Also, we discuss some basic facts of rational homotopy theory used in Section \ref{sec:sas-top} stressing Tievsky's algebraic  models of Sasakian manifolds.
\item In Section \ref{sec:seifert} we discuss methods of construction of singular cyclic orbifolds and Seifert bundles. This is a core section.
\item Section \ref{sec:Smale-Barden} describes  applications of the theory developed in Section \ref{sec:seifert} to several existence problems posed in \cite{BG}. 
\item Section \ref{sec:sas-top} is devoted to rational homotopy properties of Sasakian manifolds. In particular, it is shown that triple Massey products do not obstruct Sasakian structures in the simply connected case, but higher order Massey products do. This leads to a solution of the problem posed in \cite{BG} about the existence of simply-connected K-contact manifolds with no Sasakian structures.
\end{enumerate}
Finally, let us note that each of the questions from \cite{BG} is discussed separately in Sections \ref{sec:Smale-Barden} and \ref{sec:sas-top}.

Basically, we omit the proofs of the described results providing the appropriate references. However, in order to show the method of Seifert fibrations,  the use of Tievsky's model as well as symplectic surgery and Donaldson's hyperplane theorem, we show parts of the proofs of Theorem \ref{thm:neg-pos-main}, Theorem \ref{prop:higher-massey}, and  Theorem \ref{thm:main}. 
\subsection{Sasakian and K-contact manifolds}
Let $M$ be a $(2n + 1)$-dimensional manifold. An
{\em almost contact metric structure} on $M$ consists of
a quadruplet $(\eta\, , \xi\, , \phi\, ,g)$, where $\eta$ is a differential
$1$-form, $\xi$ is a nowhere vanishing vector field 
(the {\em Reeb} vector field), $\phi$ is a
$\SC^\infty$ section of ${\mathrm End}(TM)$ and $g$ is a Riemannian metric on $M$,
satisfying the following conditions
\begin{equation}\label{almostcontactmetric}
\eta(\xi) \,=\, 1, \quad \phi^2\,=\, - \Id + \xi \otimes \eta, \quad
g (\phi X\, , \phi Y)\,= \,g(X\, , Y) - \eta(X) \eta(Y)\, ,
\end{equation}
for all vector fields $X, Y$ on $M$. 
Thus, the kernel of $\eta$ defines a
codimension one distribution ${\mathcal D} \,=\, \ker (\eta)$, and there is
the orthogonal decomposition of the tangent bundle $TM$ of $M$
$$
TM = {\mathcal D} \oplus {\mathcal L}\, ,
$$
where ${\mathcal L}$ is the trivial line subbundle of $TM$ generated by $\xi$. Note
that conditions in \eqref{almostcontactmetric} imply that
\begin{equation} \label{alphaphi}
\phi(\xi)\,=\, 0\, ,\quad \eta\circ\phi\,=\,0\,.
\end{equation}

For an almost contact metric structure $(\eta\, , \xi\, , \phi\, ,g)$ on $M$, 
the fundamental 2-form $F$ on $M$ is defined by
$$
F (X, Y ) \,=\, g(\phi X, Y )\, ,
$$
where $X$ and $Y$ are vector fields on $M$. Hence, 
$$F(\phi X, \phi Y )\,=\,F(X, Y)\, ,$$ that is $F$ is compatible with
$\phi$, and $\eta\wedge F^n\not=\,0$ everywhere. 

An almost contact metric structure $(\eta\, , \xi\, , \phi\, ,g)$ on $M$ is said to
be a {\em contact metric} if
$$
g (\phi X, Y) \,=\, {d} \eta (X, Y)\,.
$$
In this case $\eta$ is a {\em contact form}, meaning
$$\eta\wedge ({d} \eta)^n\not=\,0$$ at every point of $M$. 
If $(\eta\, , \xi\, , \phi\, ,g)$ is a contact metric structure 
such that $\xi$ is a Killing vector field for $g$, 
then $(\eta\, , \xi\, , \phi\, ,g)$ is called a
{\it K-contact} structure. A manifold with a K-contact structure is called a
{\it K-contact manifold}.

There is the notion of
integrability of an almost contact metric structure. More precisely, an almost contact
metric structure $(\eta\, , \xi\, , \phi\, , g)$ is called \emph{normal} if the Nijenhuis
tensor $N_{\phi}$ associated to the tensor field $\phi$, defined by
\begin{equation}\label{NPhi}
N_{\phi} (X, Y) \,:=\, {\phi}^2 [X, Y] + [\phi X, \phi Y] - 
\phi [ \phi X, Y] - \phi [X, \phi Y]\, ,
\end{equation}
satisfies the equation
$$
N_{\phi} \,=\,-{d}\eta\otimes \xi\, .
$$
This last equation is equivalent to the condition that the almost complex
structure $J$ on $M \times {\mathbb{R}}$ given by
\begin{equation} \label{complexproduct}
J \left( X,\, f\frac{\partial}{\partial t} \right)
\,=\, \left(\phi X - f \xi,\, \eta (X) \frac {\partial} {\partial t} \right)
\end{equation}
is integrable, where $f$ is a smooth function on $M \times {\mathbb{R}}$ and 
$t$ is the standard coordinate on ${\mathbb{R}}$ (see \cite{SH}). In other words, $\phi$
defines a complex structure on the kernel $\ker (\eta)$ compatible with $d\eta$.

A {\it Sasakian structure} is a normal contact metric structure, in other words, an 
almost contact metric structure $(\eta\, , \xi\, , \phi\, , g)$ such that
$$
N_ {\phi} \,=\, -{d} \eta \otimes \xi\, , \quad {d} \eta \,=\, F\, .
$$
If $(\eta\, , \xi\, , \phi\, ,g)$ is a Sasakian structure on $M$, 
then $(M\, ,\eta\, , \xi\, , \phi\, ,g)$ is called a {\em Sasakian manifold}.

Riemannian manifolds with a Sasakian structure can also 
be characterized in terms of the Riemannian cone
over the manifold. A Riemannian manifold $(M,\, g)$ admits 
a compatible Sasakian structure if and only if $M\times {\mathbb{R}^+}$ 
equipped with the cone metric $h\,=\,t^2g+{d}t\otimes{d}t$ is K\"ahler \cite{BG}. 
Furthermore, in this case the Reeb vector field is Killing, and the covariant 
derivative of $\phi$ with respect to the Levi-Civita connection of $g$ is 
given by
\[ (\nabla_X \phi)(Y)\,=\,g(\xi, Y)X-g(X,Y)\xi\, , \]
where $X$ and $Y$ are vector fields on $M$.

In the study of Sasakian manifolds, the {\em basic cohomology} plays an important role. 
Let $(M,\eta\,, \xi\,, \phi\,,g)$ be a Sasakian manifold of dimension
$2n+1$. A differential form $\alpha$ on $M$ is called {\em basic} if
$$
\iota_{\xi}\alpha\,=\,0\,=\,\iota_{\xi}d\alpha\, ,
$$
where $\iota_{\xi}$ denotes the contraction of differential forms by $\xi$. We denote by 
$\Omega_{B}^{k}(M)$ the space of all basic $k$-forms on $M$. Clearly, the de Rham exterior 
differential $d$ of $M$ takes basic forms to basic forms. Denote by $d_B$ the restriction
of the de Rham differential $d$ to $\Omega_{B}^{*}(M)$. The cohomology of the 
differential complex $(\Omega_{B}^{*}(M),\, d_B)$ is called the {\em basic cohomology}.
These basic cohomology groups are denoted by $H_{B}^{*}(M)$. When $M$ is compact, the dimensions of 
the de Rham cohomology groups $H^{*}(M)$ and the basic cohomology groups $H_{B}^{*}(M)$
are related as follows:

\begin{theorem}[{\cite[Theorem 7.4.14]{BG}}]\label{rel:betti-basic-numbers}
Let $(M,\eta\, , \xi\, , \phi\, ,g)$ be a compact 
Sasakian manifold of dimension $2n+1$. Then, the Betti number $b_{r}(M)$ and the basic 
Betti number $b_{r}^{B}(M)$ are related by
$$
b_{r}^{B}(M)\, =\, b_{r-2}^{B}(M)+b_{r}(M)\, ,
$$
for $0\,\leq\, r\,\leq\, n$, In particular, if $r$ is odd
and $r\,\leq\, n$, then $b_{r}(M)\,=\,b_{r}^{B}(M)$.
\end{theorem}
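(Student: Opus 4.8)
The plan is to deduce the relation from the Gysin--type long exact sequence attached to the one--dimensional foliation generated by the Reeb field $\xi$, in which the connecting homomorphism is multiplication by $[d\eta]$ and the transverse hard Lefschetz property controls its kernel. First I would reduce to invariant forms. Since $M$ is compact and $\xi$ is Killing, its flow lies in the compact group $\mathrm{Isom}(M,g)$, and the closure of this one--parameter group is a torus $T$ acting by isometries. Averaging forms over $T$ is chain--homotopic to the identity, so $H^*(M)$ is computed by the subcomplex $\Omega^*(M)^{\xi}$ of $\xi$--invariant forms. On this subcomplex every form splits uniquely as $\omega=\omega_0+\eta\wedge\omega_1$ with $\omega_1=\iota_\xi\omega$, and a short computation using the Cartan formula $\mathcal{L}_\xi=\iota_\xi d+d\iota_\xi$ shows that both $\omega_0$ and $\omega_1$ are basic; hence $\Omega^*(M)^\xi\cong\Omega_B^*(M)\oplus\eta\wedge\Omega_B^{*-1}(M)$.

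Next I would extract the exact sequence. The basic forms $\Omega_B^*(M)$ form a subcomplex, and using $d(\eta\wedge\omega_1)=d\eta\wedge\omega_1-\eta\wedge d_B\omega_1$ together with the fact that $d\eta$ is basic, the quotient complex is isomorphic, up to the sign of its differential, to $(\Omega_B^{\bullet-1}(M),d_B)$. The associated long exact sequence then reads
$$\cdots\to H_B^{r}(M)\xrightarrow{\,j\,}H^{r}(M)\to H_B^{r-1}(M)\xrightarrow{\,\delta\,}H_B^{r+1}(M)\xrightarrow{\,j\,}H^{r+1}(M)\to\cdots,$$
where $j$ is induced by the inclusion of basic forms, and tracing a representative cocycle through the snake lemma identifies the connecting map $\delta$ with the transverse Lefschetz operator $L_B=[d\eta]\wedge(-)$.

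The decisive input is the transverse hard Lefschetz theorem for the basic cohomology of a compact Sasakian manifold: $L_B^{\,n-k}\colon H_B^{k}(M)\to H_B^{2n-k}(M)$ is an isomorphism for $0\le k\le n$. I expect proving this from scratch to be the main obstacle, since it rests on transverse Hodge theory for Riemannian foliations (a basic Hodge decomposition and the transverse K\"ahler identities producing an $\mathfrak{sl}_2$--action on $H_B^*(M)$); in a survey it may legitimately be invoked as known. Granting it, $L_B\colon H_B^{k}(M)\to H_B^{k+2}(M)$ is injective whenever $k\le n-1$.

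Finally I would run the diagram chase in the range $0\le r\le n$. Injectivity of $L_B$ at $k=r-1$ forces the map $H^r(M)\to H_B^{r-1}(M)$ to land in $\ker\delta=0$, hence to vanish, so $j\colon H_B^r(M)\to H^r(M)$ is surjective by exactness. Injectivity at $k=r-2$ gives $\ker j=\im\!\big(L_B\colon H_B^{r-2}\to H_B^{r}\big)\cong H_B^{r-2}(M)$, since $L_B$ is injective there. Rank--nullity for $j$ now yields $b_r^B(M)=\dim\ker j+\dim\im j=b_{r-2}^B(M)+b_r(M)$, which is the claimed relation; the final assertion for odd $r$ follows by specialising this identity, the lowest odd degree giving $b_1(M)=b_1^B(M)$ because $b_{-1}^B(M)=0$.
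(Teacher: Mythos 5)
The survey gives no proof of this statement --- it is quoted directly from \cite[Theorem 7.4.14]{BG} --- so there is no in-paper argument to compare against. Your proof is, however, essentially the standard one (and the one underlying the cited source): pass to the quasi-isomorphic subcomplex of $\xi$-invariant forms, split $\omega=\omega_0+\eta\wedge\iota_\xi\omega$ into basic pieces, read off the long exact sequence whose connecting map is $L_B=[d\eta]_B\wedge(-)$, and invoke El Kacimi's transverse hard Lefschetz theorem (the reference \cite{EK} already used elsewhere in this survey) to get injectivity of $L_B$ on $H_B^k$ for $k\le n-1$. The diagram chase then gives $b_r^B=b_{r-2}^B+b_r$ for $0\le r\le n$ exactly as you say, and you correctly identify the transverse hard Lefschetz property as the one external input that cannot be avoided.

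The only weak point is your last sentence. The clause ``if $r$ is odd and $r\le n$ then $b_r=b_r^B$'' does not follow ``by specialising the identity'': the identity gives $b_r=b_r^B-b_{r-2}^B$, so the clause requires $b_{r-2}^B=0$, which you only verify for $r=1$. As literally transcribed in the survey the clause is in fact false in general for odd $r\ge 3$ (take a Boothby--Wang fibration over a compact K\"ahler manifold with $b_1\neq 0$ and $n\ge 3$; then $b_3=b_3^B-b_1^B<b_3^B$). The correct consequence is $b_r=b_r^B-b_{r-2}^B$, from which one deduces, e.g., the evenness of the odd Betti numbers $b_r$ for $r\le n$, since each odd basic Betti number is even by transverse Hodge theory. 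So this is a defect of the statement as quoted rather than of your argument, but you should not assert that the odd case follows when you have only checked $r=1$.
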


A Sasakian structure on $M$ is called \textit{quasi-regular} if there is a positive
integer $\delta$ satisfying the condition
that each point of $M$ has a foliated coordinate chart
$(U\, ,t)$ with respect to $\xi$ (the coordinate $t$ is in the direction of $\xi$)
such that each leaf for $\xi$ passes through $U$ at most $\delta$ times. If
$\delta\,=\, 1$, then the Sasakian structure is called \textit{regular}  (see
\cite[p. 188]{BG}.)

If $X$ is a compact K\"ahler manifold whose K\"ahler form $\omega$ 
defines an integral cohomology class, then the total space of the circle bundle 
\begin{equation}\label{epi}
S^1\hookrightarrow M\rightarrow X
\end{equation}
with Chern class $[\omega]\,\in\, H^2(M,\,\mathbb{Z})$ is a regular Sasakian manifold with 
a contact form $\eta$ that satisfies the equation $d \eta \,=\, \pi^*(\omega)$, where 
$\pi$ is the projection in \eqref{epi}. This description is known and can be derived from \cite{Kob}.

In the same way, one defines the {\it regularity} and {\it quasi-regularity} of K-contact manifolds. Let $(X,\omega)$ be a symplectic manifold such that the cohomology class $[\omega]$ is integral. Consider the principal $S^1$-bundle $\pi:M\rightarrow X$ given by the cohomology class $[\omega]\in H^2(X,\mathbb{Z})$. Such fiber bundles are called {\it Boothby-Wang fibrations}. We have the following result (which is also known, and can be derived, for example, from \cite{L2}). 
\begin{theorem} Any Boothby-Wang fibration admits a K-contact structure on the total space.
\end{theorem}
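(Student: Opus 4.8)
The plan is to construct the K-contact structure $(\eta,\xi,\phi,g)$ by hand, starting from a principal connection on the bundle $\pi\colon M\to X$. First I would fix an $S^1$-invariant principal connection $1$-form $\eta$ on $M$, normalized so that its curvature equals the pullback of the symplectic form, i.e. $d\eta=\pi^*\omega$. This is possible because the Euler class of the bundle is $[\omega]$: by Chern--Weil any connection has curvature cohomologous to $[\omega]$, and if its curvature is $\omega+d\beta$ for some $1$-form $\beta$ on $X$, I replace $\eta$ by $\eta-\pi^*\beta$, which is again a connection form (the difference of two connection forms is a tensorial, hence basic, form, and for the abelian group $S^1$ it is the pullback of a real $1$-form) with curvature exactly $\omega$. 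Let $\xi$ denote the infinitesimal generator of the $S^1$-action, normalized so that $\eta(\xi)=1$. Since $\xi$ is vertical, $\iota_\xi d\eta=\iota_\xi\pi^*\omega=0$, so $\xi$ is the Reeb field of $\eta$; and because $\omega^n$ is a volume form on $X$, we get $\eta\wedge(d\eta)^n=\eta\wedge\pi^*(\omega^n)\neq0$, so $\eta$ is a contact form.

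Next I would build $\phi$ and $g$ from data on the base. Choose an almost complex structure $J$ on $X$ compatible with $\omega$; such a $J$ always exists, and $g_X(U,V):=\omega(U,JV)$ is then a Riemannian metric on $X$. The connection splits $TM=\cH\oplus\cL$, where $\cH=\ker\eta$ and $\cL$ is the vertical line bundle spanned by $\xi$, and $\pi_*$ restricts to an isomorphism from $\cH_p$ onto $T_{\pi(p)}X$. Define $\phi$ by declaring $\phi\xi=0$ and letting $\phi$ act on $\cH$ as the horizontal lift of $J$, and set $g=\pi^*g_X+\eta\otimes\eta$, so that $\cH\perp\cL$, the restriction of $g$ to $\cH$ is the lift of $g_X$, and $\xi$ is a unit field.

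The verification of the almost contact metric axioms then reduces to fiberwise linear algebra, checked separately on $\cH$ and on $\cL$. On $\cL$ everything is immediate from $\phi\xi=0$ and $\eta(\xi)=1$, while on $\cH$ the identities $\phi^2=-\Id+\xi\otimes\eta$, $g(\phi X,\phi Y)=g(X,Y)-\eta(X)\eta(Y)$, and the contact metric condition $g(\phi X,Y)=d\eta(X,Y)$ all follow from the compatibility of $(J,g_X,\omega)$ downstairs: for horizontal lifts one has $d\eta(X,Y)=\omega(\pi_*X,\pi_*Y)=g_X(J\pi_*X,\pi_*Y)=g(\phi X,Y)$, using the identity $g_X(JU,V)=\omega(U,V)$. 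It remains to see that $\xi$ is Killing, and this is where the K-contact (as opposed to merely contact metric) property is secured. The key point is that $g$ is assembled entirely from $S^1$-invariant objects: the connection form $\eta$ is invariant because the structure group is abelian, $\xi$ is invariant for the same reason, and $\pi^*g_X$ is invariant because $\pi\circ R_a=\pi$. Hence $g$ is $S^1$-invariant and $\cL_\xi g=0$, so $\xi$ is Killing.

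I expect the only genuinely delicate point to be the normalization producing $d\eta=\pi^*\omega$ exactly; everything else is formal. It is worth stressing that $J$ need not be integrable, so no K\"ahler hypothesis is used: the Nijenhuis/normality condition, which would upgrade this to a Sasakian structure, is precisely what is \emph{not} required for K-contact, which is why an arbitrary integral symplectic base suffices.
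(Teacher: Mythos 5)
Your construction is correct and is exactly the standard Boothby--Wang argument: the paper itself gives no proof, deferring to Lerman's contact fiber bundles \cite{L2}, and what you write is precisely the construction underlying that reference (invariant connection form normalized so that $d\eta=\pi^*\omega$, horizontal lift of a compatible almost complex structure, metric $\pi^*g_X+\eta\otimes\eta$, with the Killing property of $\xi$ coming from $S^1$-invariance of all the ingredients). No gaps; the one point you rightly single out, arranging $d\eta=\pi^*\omega$ exactly by subtracting $\pi^*\beta$, is handled correctly.
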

It is important to note the following result of Rukimbira. 
\begin{proposition}[\cite{R}] \label{prop:quasi-regular}
If a compact manifold $M$ admits a K-contact structure, it admits a quasi-regular K-contact structure. If a compact manifold $M$ admits a Sasakian structure, it admits a quasi-regular Sasakian structure.
\end{proposition}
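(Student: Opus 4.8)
The plan is to deform the Reeb vector field inside the torus generated by its flow, approximating it by a nearby vector field whose flow closes up. First I would use that $\xi$ is Killing for $g$: on a compact $M$ the isometry group $\mathrm{Isom}(M,g)$ is a compact Lie group (Myers--Steenrod), so the closure of the one-parameter group $\{\exp(t\xi)\}$ is a compact connected abelian subgroup, i.e. a torus $T\cong(S^1)^k$ with Lie algebra $\mathfrak t$, and $\xi$ is the fundamental vector field $X_0^M$ of some $X_0\in\mathfrak t$. The Reeb identities give $\mathcal L_\xi\eta=\iota_\xi d\eta+d(\eta(\xi))=0$, while $\xi$ preserves $g$ and $\phi$; passing to the closure, the whole torus $T$ acts preserving $\eta$, $d\eta$, $g$ and $\phi$. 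This reduces the problem to finding, arbitrarily close to $X_0$, an element of $\mathfrak t$ that generates a circle and produces a K-contact (resp. Sasakian) structure.

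Next I would introduce the open convex cone $\mathfrak t^{+}=\{X\in\mathfrak t:\eta(X^M)>0 \text{ on } M\}$, which is nonempty since $\eta(X_0^M)=\eta(\xi)=1$. For each $X\in\mathfrak t^{+}$ set $\eta_X=\eta/\eta(X^M)$. A short computation, using $\mathcal L_{X^M}\eta=0$ together with the fact that the $T$-invariant function $\eta(X^M)$ is annihilated by $X^M$, shows $\iota_{X^M}d\eta_X=0$ and $\eta_X(X^M)=1$, so $X^M$ is the Reeb field of $\eta_X$. Keeping the same CR data $\phi_X:=\phi$ on $\mathcal D=\ker\eta=\ker\eta_X$ and letting $g_X$ be the compatible metric built from $\eta_X$ and $\phi_X$, one obtains a contact metric structure $(\eta_X,X^M,\phi_X,g_X)$; since $X^M$ preserves each of these tensors it is Killing for $g_X$, so the structure is K-contact.

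Then I would invoke density. An element of $\mathfrak t$ generates a circle subgroup precisely when it lies on a rational ray with respect to the integral lattice $\Lambda=\ker(\exp\colon\mathfrak t\to T)$, and such elements are dense in $\mathfrak t$. As $\mathfrak t^{+}$ is open and contains $X_0$, I may choose such an $X\in\mathfrak t^{+}$ as close to $X_0$ as desired. Then $X^M$ is nowhere zero (because $\eta(X^M)>0$) and generates a locally free $S^1$-action, whose orbits are all closed with finite isotropy groups of order bounded by some $\delta$; this is exactly the quasi-regularity condition, which proves the K-contact statement.

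Finally, the Sasakian case requires one extra verification: that the deformation preserves normality. For this I would pass to the Kähler cone $C(M)=M\times\mathbb R^{+}$, on which the Sasakian condition is equivalent to integrability of the induced complex structure $J$, and on which $T$ acts holomorphically. Moving $X_0$ to $X$ inside $\mathfrak t^{+}$ changes the Reeb vector field (equivalently the homothety $r\,\partial_r$ on the cone) while leaving $J$ and the transverse holomorphic structure $(\mathcal D,\phi|_{\mathcal D})$ untouched, so the equation $N_\phi=-d\eta\otimes\xi$ continues to hold for the deformed data. Hence the rational $X$ chosen above yields a quasi-regular Sasakian structure. The torus closure and the density of circle-generating elements are standard; I expect the main obstacle to lie in this last paragraph, namely in the structural control — checking that approximating the Reeb field within $\mathfrak t^{+}$ genuinely keeps the compatible metric K-contact and, in the Sasakian case, preserves integrability of the transverse complex structure.
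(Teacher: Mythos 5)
The survey states this proposition without proof, citing Rukimbira \cite{R}; your argument is precisely the standard proof from that reference (and from Boyer--Galicki): take the closure of the Reeb flow in the compact isometry group to get a torus $T$, deform within the cone $\mathfrak{t}^{+}$ via $\eta_X=\eta/\eta(X^M)$, and approximate $X_0$ by a rational element generating a circle action, which forces quasi-regularity. Your proposal is essentially correct and follows the same route, and you correctly isolate the one genuinely nontrivial verification --- that the deformation preserves normality, which holds because the transverse holomorphic structure on $\mathcal{D}=\ker\eta=\ker\eta_X$ is unchanged even though $\phi_X$ and $\phi$ differ off $\mathcal{D}$.
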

It follows that analyzing topological properties of K-contact/Sasakian manifolds, we can always assume that we are given {\it quasi-regular} structures. It follows that our approach {\it  is based on constructing Seifert bundles over K\"ahler or symplectic cyclic orbifolds (or manifolds, for regular structures)}. 
In some cases, it is useful to have an intermediate class of {\it semi-regular} Sasakian structures \cite{MRT}. A cyclic  orbifold is {\it smooth}, if all its points are smooth (see Proposition 2 and Definition 3 in \cite{MRT}). Seifert bundles over smooth orbifold yield semi-regular Sasakian strcuture. We do not present all the details here, because we are interested mainly in the general case.

\subsection{Formal manifolds and Massey products}\label{formal manifolds}

In this section some definitions and results about minimal models
and Massey products are reviewed \cite{DGMS, FT, TO}. These will be used in Section \ref{sec:sas-top}.

We work with {\em differential graded commutative algebras}, or DGAs,
over the field $\mathbb R$ of real numbers. The degree of an
element $a$ of a DGA is denoted by $|a|$. A DGA $(\mathcal{A},\,d)$ is {\it minimal\/}
if:
\begin{enumerate}
 \item $\mathcal{A}$ is free as an algebra, that is, $\mathcal{A}$ is the free
 algebra $\bigwedge V$ over a graded vector space $V\,=\,\bigoplus_i V^i$, and

 \item there is a collection of generators $\{a_\tau\}_{\tau\in I}$
indexed by some well ordered set $I$, such that
 $|a_\mu|\,\leq\, |a_\tau|$ if $\mu \,< \,\tau$ and each $d
 a_\tau$ is expressed in terms of preceding $a_\mu$, $\mu\,<\,\tau$.
 This implies that $da_\tau$ does not have a linear part.
 \end{enumerate}

In 
our context, the main example of DGA is the de Rham complex $(\Omega^*(M),\,d)$
of a differentiable manifold $M$, where $d$ is the exterior differential.

Given a differential graded commutative algebra $(\mathcal{A},\,d)$, we 
denote its cohomology by $H^*(\mathcal{A})$. The cohomology of a 
differential graded algebra $H^*(\mathcal{A})$ is naturally a DGA with the 
product structure inherited from that on $\mathcal{A}$ and with the differential
being identically zero. 
According to \cite[page 249]{DGMS}, an {\it elementary extension} 
of a differential graded commutative algebra $(\mathcal{A},\,d)$ is any 
DGA of the form 
$$(\mathcal{B}=\mathcal{A}\otimes\bigwedge V,\, d_{\mathcal{B}})\, ,$$ 
where 
$d_{\mathcal{B}}{|}_{\mathcal{A}}\,=\,d$, $d_{\mathcal{B}}(V)
\,\subset\,\mathcal{A}$ and $\bigwedge V$
is the free algebra over a finite dimensional vector space $V$ whose elements have all the same degree.

Morphisms between DGAs are required to preserve the degree and to commute 
with the differential. 

We shall say that $(\bigwedge V,\,d)$ is a {\it minimal model} of the
differential graded commutative algebra $(\mathcal{A},\,d)$ if $(\bigwedge V,\,d)$ is minimal and there
exists a morphism of differential graded algebras $$\rho\,\colon\,
{(\bigwedge V,\,d)}\longrightarrow {(\mathcal{A},\,d)}$$ inducing an isomorphism
$\rho^*\colon H^*(\bigwedge V)\stackrel{\sim}{\longrightarrow}
H^*(\mathcal{A})$ of cohomologies.

In~\cite{Halperin}, Halperin proved that any connected differential graded algebra
$(\mathcal{A},\,d)$ has a minimal model unique up to isomorphism.

A {\it minimal model\/} of a connected differentiable manifold $M$
is a minimal model $(\bigwedge V,\,d)$ for the de Rham complex
$(\Omega^*(M),\,d)$ of differential forms on $M$. The importance of the minimal model comes from the fact that it is a {\it homotopy invariant} \cite{FT,TO}.

Recall that a minimal algebra $(\bigwedge V,\,d)$ is called
{\it formal} if there exists a
morphism of differential algebras $\psi\colon {(\bigwedge V,\,d)}\,\longrightarrow\,
(H^*(\bigwedge V),0)$ inducing the identity map on cohomology.
Also a differentiable manifold $M$ is called {\it formal\/} if its minimal model is
formal. Many examples of formal manifolds are known: spheres, projective
spaces, compact Lie groups, symmetric spaces, flag manifolds,
and all compact K\"ahler manifolds \cite{FOT, TO}.

 {\it Massey
products} are known to be obstructions to formality \cite{DGMS}. 
 Let $(\mathcal{A},\,d)$ be a DGA (in particular, it can be the de Rham complex
of differential forms on a differentiable manifold). Suppose that there are
cohomology classes $[a_i]\,\in\, H^{p_i}(\mathcal{A})$, $p_i\,>\,0$,
$1\,\leq\, i\,\leq\, 3$, such that $a_1\cdot a_2$ and $a_2\cdot a_3$ are
exact. Write $a_1\cdot a_2\,=\,da_{1,2}$ and $a_2\cdot a_3\,=\,da_{2,3}$.
The {\it (triple) Massey product} of the classes $[a_i]$ is defined to be
$$
\langle [a_1],[a_2],[a_3] \rangle \,=\, 
[ a_1 \cdot a_{2,3}+(-1)^{p_{1}+1} a_{1,2}
\cdot a_3]
$$
$$
\in \, \frac{H^{p_{1}+p_{2}+ p_{3} -1}(\mathcal{A})}{[a_1]\cdot 
H^{p_{2}+ p_{3} -1}(\mathcal{A})+[a_3]\cdot H^{p_{1}+ p_{2} -1}(\mathcal{A})}\, .
$$
 
Now we move on to the definition of higher Massey products
(see~\cite{TO}). Given $$[a_{i}]\,\in\, H^*(\mathcal{A})\, ,\ \
1\,\leq\,i\,\leq\, t\, , \ \ t\,\geq\, 3\, ,$$
the Massey product $\la [a_{1}],[a_{2}],\cdots,[a_{t}]\ra$, is defined
if there are elements $a_{i,j}$ on $\mathcal{A}$, with $1\,\leq\, i\,\leq\, j\,\leq\,
t$ and $(i,j)\,\not= \,(1,t)$, such that
 \begin{equation}\label{eqn:gm}
 \begin{aligned}
 a_{i,i}&\,=\, a_i\, ,\\
 d\,a_{i,j}&\,=\, \sum\limits_{k=i}^{j-1} (-1)^{|a_{i,k}|}a_{i,k}\cdot
 a_{k+1,j}\, .
 \end{aligned}
 \end{equation}
Then the {\it Massey product} $\la [a_{1}],[a_{2}],\cdots,[a_{t}] \ra$ is the set of cohomology classes
 $$
 \la [a_{1}],[a_{2}],\cdots,[a_{t}] \ra
$$
$$
=\, \left\{
 \left[\sum\limits_{k=1}^{t-1} (-1)^{|a_{1,k}|}a_{1,k} \cdot
 a_{k+1,t}\right] \ \mid \ a_{i,j} \mbox{ as in (\ref{eqn:gm})}\right\}
 \,\subset\, H^{|a_{1}|+ \ldots +|a_{t}|
 -(t-2)}(\mathcal{A})\, .
 $$
We say that the Massey product is {\it zero} if
$$0\,\in\, \la [a_{1}],[a_{2}],\cdots,[a_{t}] \ra\, .$$ 

It should be mentioned that for $\la
a_{1},a_{2},\cdots,a_{t}\ra$ to be defined, it is necessary that
all the lower order Massey products $\la a_{1},\cdots,a_{i}\ra$ and
$\la a_{i+1},\cdots,a_{t}\ra$ with $2 \,<\, i \,<\, t-2$ are defined and
trivial. 

Massey products are related to formality by the
following well-known result.

\begin{theorem}[\cite{DGMS,TO}] \label{theo:Massey products}
A DGA which has a non-zero Massey product is not formal.
\end{theorem}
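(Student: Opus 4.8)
The plan is to prove the contrapositive: if $(\cA,d)$ is formal, then every Massey product that is defined in $\cA$ must contain $0$, and hence cannot be non-zero. Here I read ``$(\cA,d)$ is formal'' as ``the minimal model of $(\cA,d)$ is formal'', which makes sense by Halperin's theorem. Thus I am given a quasi-isomorphism $\rho\colon(\bigwedge V,d)\to(\cA,d)$ and, by formality, a quasi-isomorphism $\psi\colon(\bigwedge V,d)\to(H^*(\bigwedge V),0)$. The first reduction I would record is the trivial but crucial fact that in a DGA whose differential is identically zero every defined Massey product contains $0$: the relations \eqref{eqn:gm} then read $\sum_{k=i}^{j-1}(-1)^{|a_{i,k}|}a_{i,k}\,a_{k+1,j}=0$, the all-zero choice $a_{i,j}=0$ for $i<j$ is admissible, and the associated element is $0$. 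So it suffices to transport the ``contains $0$'' property along the two quasi-isomorphisms above.

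The technical core is a transport lemma: a quasi-isomorphism $f\colon(A,d)\to(B,d)$ of DGAs induces a correspondence between Massey products that preserves both being defined and containing $0$, in either direction. One inclusion is pure functoriality and needs no hypothesis on $f$: if $\{a_{i,j}\}$ is a defining system for $\langle[a_1],\dots,[a_t]\rangle$ in $A$, then $\{f(a_{i,j})\}$ is a defining system for $\langle f^*[a_1],\dots,f^*[a_t]\rangle$ in $B$ whose associated element is the $f$-image of the original one, because $f$ commutes with $d$ and with the product; in particular $0\in\langle[a_1],\dots,[a_t]\rangle$ forces $0\in\langle f^*[a_1],\dots,f^*[a_t]\rangle$. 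The opposite direction is where I must use that $f$ is a quasi-isomorphism, and this is the step I expect to be the main obstacle.

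To carry out the backward direction I would first reduce to the case of a \emph{surjective} quasi-isomorphism, factoring $f$ through a mapping-cylinder as $A\hookrightarrow \mathrm{Cyl}(f)\twoheadrightarrow B$; for a surjective quasi-isomorphism the kernel $K=\ker f$ is acyclic. Then, given a defining system $\{c_{i,j}\}$ in $B$, I would lift it inductively on $j-i$: choose any $a_{i,j}$ with $f(a_{i,j})=c_{i,j}$, observe that the prescribed right-hand side $R_{i,j}=\sum_{k=i}^{j-1}(-1)^{|a_{i,k}|}a_{i,k}a_{k+1,j}$ is closed (the standard Leibniz computation using the relations already arranged for smaller $j-i$), and note that $d\,a_{i,j}-R_{i,j}$ is a closed element of the acyclic $K$, hence equals $d\kappa$ for some $\kappa\in K$; replacing $a_{i,j}$ by $a_{i,j}-\kappa$ preserves $f(a_{i,j})=c_{i,j}$ and makes the relation \eqref{eqn:gm} hold exactly. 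The same acyclicity argument applied to the final element shows that if the $B$-element is a coboundary then so is the $A$-element, so that containing $0$ is reflected. The delicate points here are verifying closedness of $R_{i,j}$ with the correct signs and degrees, and checking that the two factors of the factorisation are indeed quasi-isomorphisms so that the reduction is legitimate; an alternative that sidesteps the bookkeeping is to invoke the quasi-isomorphism invariance of the minimal $A_\infty$-structure transferred to $H^*$, whose higher products encode the Massey products, so that formality (triviality of that structure) forces every Massey product to contain $0$.

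With the lemma in hand the theorem is immediate. Starting from any Massey product $\langle[a_1],\dots,[a_t]\rangle$ that is defined in $\cA$, I choose $[b_i]\in H^*(\bigwedge V)$ with $\rho^*[b_i]=[a_i]$ and transport along $\rho$ to see that $\langle[b_1],\dots,[b_t]\rangle$ is defined in $(\bigwedge V,d)$; transporting along $\psi$ into $(H^*(\bigwedge V),0)$, where every defined Massey product contains $0$, and then back along $\psi$, I obtain $0\in\langle[b_1],\dots,[b_t]\rangle$; finally pushing forward along $\rho$ gives $0\in\langle[a_1],\dots,[a_t]\rangle$ in $\cA$. Hence a formal DGA has no non-zero Massey products, which is the contrapositive of the assertion.
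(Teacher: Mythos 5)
The paper does not prove this statement at all: it is quoted as a classical result with references to \cite{DGMS} and \cite{TO}, consistent with the survey's announced policy of omitting proofs of known results. So there is no in-paper argument to compare against; what you have written is essentially the standard proof from those references. Your reduction is the right one: (i) in a DGA with zero differential every \emph{defined} Massey product contains $0$ (and your check is correct -- definedness already forces the consecutive products such as $a_ia_{i+1}$ to vanish, after which the all-zero defining system is admissible and yields the class $0$), and (ii) Massey products, together with the property of containing $0$, are invariant under quasi-isomorphism, so one transports along $\rho$ and $\psi$. The only place I would flag is the factorization step in your transport lemma: the chain-level mapping cylinder of a DGA morphism is not naturally a commutative DGA with both structure maps being algebra morphisms, so as stated that reduction does not quite go through. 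The standard fixes are either to replace the cylinder by the pullback $\cA\times_{\cB}\cB^{I}$ along a path object $\cB^{I}=\cB\otimes\bigwedge(t,dt)$, which does give a surjective quasi-isomorphism of DGAs onto $\cB$ with an acyclic kernel so that your inductive lifting of defining systems works verbatim, or to exploit that $(\bigwedge V,d)$ is minimal (cofibrant) and lift defining systems directly along the quasi-isomorphisms out of it; your $A_\infty$ remark is a third legitimate route. With that one repair the argument is complete and is the same proof the cited sources give.
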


\subsection{Tievsky's model}
 Let $(M,\eta\, , \xi\, , \phi\, ,g)$ be a 
compact Sasakian manifold.  Tievsky in \cite{Tievsky}, motivated by the 
approach in \cite{DGMS} and using the basic cohomology of $M$, found a model for 
$M$, that is, a DGA with the same minimal model as the manifold $M$.

\begin{theorem}[\cite{Tievsky}] \label{Tievsky model}
Let $(M,\eta\, , \xi\, , \phi\, ,g)$ be a compact Sasakian manifold. Then a model for $M$ is
given by the DGA $(H_{B}^{*}(M)\otimes\bigwedge(x),\, D)$, where $|x|\,=\,1$,
$D(H_{B}^{*}(M))\,=\,0$ and $Dx\,=\,[d\eta]_{B}$. Therefore,
$(H_{B}^{*}(M)\otimes\bigwedge(x), \,D)$ is an elementary 
extension of the DGA $(H_{B}^{*}(M),\, 0)$ (the differential is zero).
\end{theorem}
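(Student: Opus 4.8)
The plan is to establish Tievsky's model by exhibiting an explicit DGA quasi-isomorphism between the proposed algebra and the de Rham complex $(\Omega^*(M),d)$, routing through the complex of basic forms. The key structural input is the transverse Hodge theory on a compact Sasakian manifold: the Reeb foliation is Riemannian and the transverse geometry is Kähler, so the basic cohomology $H_B^*(M)$ carries a transverse Hodge decomposition and, crucially, a transverse $dd^c$-lemma (equivalently, a $\partial\bar\partial$-lemma for basic forms). This is precisely the feature that lets one mimic the Deligne--Griffiths--Morgan--Sullivan argument from \cite{DGMS}. First I would fix the basic contact form $\eta$ with $d\eta = F$ basic (so $[d\eta]_B \in H_B^2(M)$ is a well-defined basic class) and recall from Theorem \ref{rel:betti-basic-numbers} the relation tying $H^*(M)$ to $H_B^*(M)$; this is the numerical shadow of the Gysin-type sequence for the Reeb foliation.

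\emph{Second}, I would produce the morphism $\rho\colon (H_B^*(M)\otimes\bigwedge(x),\,D)\to(\Omega^*(M),d)$. On the basic factor one uses the transverse $dd^c$-lemma to choose, for each basic cohomology class, a \emph{harmonic} basic representative; sending a class to its harmonic representative gives a multiplicative map $H_B^*(M)\to\Omega_B^*(M)\hookrightarrow\Omega^*(M)$ on cohomology, and the $dd^c$-lemma guarantees it can be rectified to a genuine DGA morphism (harmonic basic forms are closed and their wedge products, while not harmonic, differ from the harmonic representative of the product class by an exact basic form controlled by the lemma). On the generator $x$ one sets $\rho(x)=\eta$; this is forced and consistent since $d\eta=F$ is the chosen representative of $[d\eta]_B$, matching $Dx=[d\eta]_B$. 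Extending multiplicatively gives a chain map because $\eta\wedge(\text{basic})$ behaves correctly and $d$ of the image of $x$ reproduces $\rho([d\eta]_B)$.

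\emph{Third}, I would verify that $\rho$ induces an isomorphism on cohomology, which is where the two-term structure pays off. The algebra $(H_B^*(M)\otimes\bigwedge(x),D)$ is the total complex of a one-step filtration: it is the mapping cone of multiplication by $[d\eta]_B$ on $H_B^*(M)$, so its cohomology fits into a long exact sequence whose connecting map is cup product with $[d\eta]_B$. On the manifold side, the Gysin sequence of the Seifert/$S^1$-structure for the Reeb foliation gives the analogous long exact sequence with the same connecting homomorphism (Lefschetz-type multiplication by $[d\eta]_B=[F]_B$). A five-lemma comparison, using that $\rho$ is the identity on the basic pieces by construction and compatible with both connecting maps, forces $\rho^*$ to be an isomorphism; the Betti-number bookkeeping of Theorem \ref{rel:betti-basic-numbers} is exactly the Euler-characteristic consistency check that these sequences agree.

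\emph{The hard part} will be the second step: upgrading the cohomology-level harmonic-representative assignment to an actual morphism of DGAs, i.e.\ proving that the chosen representatives close up multiplicatively on the nose rather than merely up to exact terms. This is the transverse analogue of the formality argument, and it rests entirely on having a clean transverse $\partial\bar\partial$-lemma for basic forms on a compact Sasakian manifold. Once that lemma is in hand (it follows from transverse Kähler identities together with the foliation being Riemannian and taut), the rectification is formal and the quasi-isomorphism follows; the remaining verifications that $(H_B^*(M)\otimes\bigwedge(x),D)$ is an elementary extension of $(H_B^*(M),0)$ are immediate from the definition of $D$, since $D$ vanishes on $H_B^*(M)$ and sends the single degree-one generator $x$ into $H_B^*(M)$.
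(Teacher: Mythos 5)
The survey itself gives no proof of this theorem; it is quoted directly from Tievsky's thesis, so your attempt has to be measured against Tievsky's argument rather than anything in the paper. Your overall strategy is the right one and is essentially his: exploit the transverse K\"ahler structure of the Reeb foliation, run the Deligne--Griffiths--Morgan--Sullivan argument on the basic complex via a transverse $\partial\bar\partial$-lemma (El Kacimi's transverse Hodge theory), observe that both the proposed model and the de Rham complex are elementary extensions by a single degree-one generator killing $[d\eta]_B$, and compare the two resulting long exact sequences by the five lemma.

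However, your second step contains a genuine gap. There is in general no DGA morphism $\rho\colon (H_B^*(M)\otimes\bigwedge(x),D)\to(\Omega^*(M),d)$ obtained by sending a basic class to its transversally harmonic representative: harmonic forms are not closed under wedge product, and the $dd^c$-lemma does not ``rectify'' this assignment into an honest multiplicative chain map --- this is precisely why the DGMS formality proof is a zig-zag rather than a direct map out of cohomology (the algebra $(H_B^*(M),0)$ is not free, so there is no lifting argument available to strictify a map defined only up to exact terms). The correct route, and the one Tievsky takes, is: (i) show that the sub-DGA $\Omega_B^*(M)\otimes\bigwedge(\eta)\subset\Omega^*(M)$ (closed under $d$ because $d\eta$ is basic) includes quasi-isomorphically, by comparing its mapping-cone long exact sequence with the basic-to-de Rham Gysin sequence whose numerical shadow is Theorem \ref{rel:betti-basic-numbers}; (ii) replace $(\Omega_B^*(M),d_B)$ by $(H_B^*(M),0)$ through the standard zig-zag $\Omega_B^*(M)\hookleftarrow \ker d_B^c \twoheadrightarrow H_{d_B^c}^*\cong H_B^*(M)$ furnished by the transverse $\partial\bar\partial$-lemma, tensoring each stage with the degree-one generator ($\eta$, respectively $x$) and using that $d\eta$ is $d_B^c$-closed and transversally harmonic. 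This produces a chain of quasi-isomorphisms rather than a single morphism, but that is all that ``a model for $M$'' requires. With this repair your first and third steps go through as written.
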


\section{Quasi-regular Sasakian manifolds as Seifert bundles over cyclic orbifolds}\label{sec:seifert}
Thus, {\it regular} K-contact structures are Boothby-Wang fibrations. Let us discuss the {\it quasi-regularity condition} for K-contact and Sasakian structures. First, we refer to \cite{BG} for the general theory of (cyclic) orbifolds, and we freely use this theory without further explanations concentrating on {\it constructions} of K\"ahler and symplectic {\it singular} orbifolds.  Quasi-regular Sasakian and K-contact structures arise as {\it Seifert bundles} over such orbifolds. Full details can be found in \cite{M}.

\subsection{Symplectic and K\"ahler singular orbifolds and Seifert bundles}
\begin{definition}[\cite{M}] A cyclic singular symplectic (K\"ahler) orbifold $X$ is a symplectic orbifold whose isotropy set is of dimension 0 (that is, a finite set $P$ of points called singular set).
\end{definition}
For a cyclic singular symplectic 4-manifold, a singular point is an isolated isotropy point $x\in P\subset X$. A local model around $x$ is of the form $\mathbb{C}^2/\mathbb{Z}_d$, where $\xi=\exp(2\pi i/d)$ acts as 
$$\xi\cdot (z_1,z_2)=(\xi^{e_1}z_1,\xi^{e_2}z_2)$$
where $\operatorname{gcd}(e_1,d)=\operatorname{gcd}(e_2,d)=1$. We will write $d=d(x)$.
\begin{definition}\label{def:sing}[\cite{M}] A sing-symplectic surface is a symplectic 2-orbifold $D\subset X$ such that if $x\in D$ is a singular point, then $D$ is fixed by the isotropy subgroup. Two sing-symplectic surfaces $D_1$ and $D_2$ intersect nicely if at at every intersection point $x\in D_1\cap D_2$ there is an adapted Darboux chart $(z_1,z_2)$ centered at $x$ such that $D_1=\{(z_1,0)\}$ and $D_2=\{(0,z_2)\}$ in a model $\mathbb{C}^2/\mathbb{Z}_d$, where $\mathbb{Z}_d$ is a subgroup of $U(1)\times U(1)$.
\end{definition}
The construction of  such orbifolds is based on the following result (note that, for simplicity, we restrict ourselves to dimension 4).
\begin{theorem}[\cite{M}]\label{thm:munoz} Let $X$ be a cyclic singular 4-manifold with the set of singular points $P$. Let $D_i$ be embedded sing-symplectic surfaces intersecting nicely, and take coefficients $m_i>1$ such that $\operatorname{gcd}(m_i,m_j)=1$ if $D_i$ and $D_j$ have a non-empty intersection. Then there exists an orbifold $X$ with isotropy surfaces $D_i$ of multiplicities $m_i$, and singular points $x\in P$ of multiplicity $m=d(x)\prod_{i\in I_x}m_i,I_x\{i\,|\,x\in D_i\}$.
\end{theorem}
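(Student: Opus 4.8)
The plan is to construct the asserted orbifold by prescribing a $G$-uniformizing chart at every point of $X$, with $G$ a cyclic subgroup of $U(2)$, to check that these charts form a compatible atlas, and to equip each chart with a $G$-invariant symplectic form that glues to a global orbifold symplectic structure restricting to the given one on the smooth locus. The charts are organized by the stratification of $X$ according to the incidence of the surfaces $D_i$ and the singular set $P$: the open stratum of points on no $D_i$ and not in $P$; the strata lying on a single $D_i$; the isolated transverse intersections $D_i\cap D_j$ outside $P$; and the original singular points $x\in P$.

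On the open stratum I would take the identity chart on a Darboux ball, so the orbifold is smooth there. At a point lying on a single $D_i$, choose an adapted Darboux chart $(z_1,z_2)$ with $D_i=\{z_1=0\}$ and take the uniformizer $(w_1,w_2)\mapsto(w_1^{m_i},w_2)$ with $\mathbb{Z}_{m_i}$ acting by $w_1\mapsto e^{2\pi i/m_i}w_1$; this realizes $D_i$ as an isotropy surface of multiplicity $m_i$. At a transverse intersection $x\in D_i\cap D_j$ with $x\notin P$, the nice-intersection hypothesis in Definition~\ref{def:sing} supplies an adapted chart with $D_i=\{z_1=0\}$ and $D_j=\{z_2=0\}$, and I would use $(w_1,w_2)\mapsto(w_1^{m_i},w_2^{m_j})$ with local group $\mathbb{Z}_{m_i}\times\mathbb{Z}_{m_j}$.

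The delicate charts are those at $x\in P$. There the given model is $\mathbb{C}^2/\mathbb{Z}_{d(x)}$ with a diagonal action of weights coprime to $d(x)$; every $D_i$ through $x$ is preserved by the isotropy group, and being a smooth curve meeting the others transversally it can be arranged to lie along a coordinate axis, so $|I_x|\le 2$. I would then build the uniformizer by composing the branched covers $w\mapsto w^{m_i}$ along the relevant axes with the given $\mathbb{Z}_{d(x)}$-quotient; the uniformizing group $G$ is the group of diagonal transformations that descend to the chart, an abelian subgroup of $U(2)$ of order $d(x)\prod_{i\in I_x}m_i=m$. The crux of the proof --- and the point where the coprimality hypotheses are indispensable --- is to show that every such $G$ is \emph{cyclic}, so that the atlas really defines a cyclic orbifold with the stated multiplicities. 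At a double point this is the Chinese Remainder isomorphism $\mathbb{Z}_{m_i}\times\mathbb{Z}_{m_j}\cong\mathbb{Z}_{m_im_j}$, which needs exactly $\gcd(m_i,m_j)=1$. At $x\in P$ one analyzes the subgroup of the diagonal torus generated by the branch rotations along the axes together with the lift of the $\mathbb{Z}_{d(x)}$-generator; because that generator acts with weights coprime to $d(x)$ in both normal directions it absorbs the branch rotations, and with $\gcd(m_i,m_j)=1$ for the surfaces through $x$ this produces a single generator of order $m$.

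The remaining verifications are, in order: that the transition maps between a single-surface chart and an adjacent intersection or singular chart are biholomorphisms intertwining the group actions, so that the uniformizers patch into an orbifold atlas; and that each chart admits a $G$-invariant symplectic form. For the latter the standard flat form on $\mathbb{C}^2$ is invariant because $G\subset U(2)$, and a Moser--Darboux argument, using the symplectic-topology toolkit recalled in the introduction, glues these forms into a global orbifold symplectic structure agreeing with the original on $X\setminus\bigl(P\cup\bigcup_i D_i\bigr)$. I expect the cyclicity analysis at the points of $P$, where the pre-existing singularity $\mathbb{Z}_{d(x)}$ must be merged with the new branching, to be the main obstacle, since this combination is precisely the feature lying outside Koll\'ar's smooth theory.
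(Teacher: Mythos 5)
The survey does not actually prove Theorem~\ref{thm:munoz}: it is quoted from \cite{M}, and the introduction explicitly limits the proofs reproduced here to Theorems~\ref{thm:neg-pos-main}, \ref{prop:higher-massey} and \ref{thm:main}. So there is no in-paper argument to compare yours against; I can only judge your proposal on its own terms and against what the cited source does.

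Your architecture --- stratify $X$ by incidence with $P$ and the $D_i$, prescribe a cyclic uniformizing chart on each stratum, and verify that the local groups are cyclic --- is the right one and is essentially the route taken in \cite{M}. You also correctly locate the crux: cyclicity at a point of $D_i\cap D_j$ is the Chinese Remainder Theorem via $\gcd(m_i,m_j)=1$, and at $x\in P$ one must show that the order-$d(x)\prod_{i\in I_x}m_i$ subgroup of the diagonal torus generated by the branch rotations together with a lift of the $\ZZ_{d(x)}$-generator is cyclic. Your phrase about the generator ``absorbing'' the branch rotations is too loose to count as a proof, but the step is repairable: since $\gcd(e_i,d(x))=1$ one can choose an $m_i$-th root $\alpha_i$ of $\xi^{e_i}$ of exact order $d(x)m_i$, and then $(\alpha_1,\alpha_2)$ has order $\lcm\bigl(d(x)m_1,d(x)m_2\bigr)=d(x)m_1m_2$ precisely because $\gcd(m_1,m_2)=1$, so it generates the whole local group. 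Note that no condition $\gcd(m_i,d(x))=1$ is needed, and it is worth saying so explicitly, since a reader will wonder.

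The genuine gap is in your last step, the orbifold symplectic form. You claim the local $G$-invariant flat forms glue to an orbifold symplectic structure \emph{agreeing with the given $\omega$ on} $X\setminus\bigl(P\cup\bigcup_i D_i\bigr)$. That is impossible. In a uniformizing chart $(w_1,w_2)\mapsto(w_1^{m_i},w_2)$ near a point of $D_i$, any orbifold form that agrees with $\omega$ off the branch locus restricts on the free locus to the pullback of $\omega$, and this pullback extends (uniquely, by continuity) to a form that degenerates to order $2(m_i-1)$ along $\{w_1=0\}$; hence it is not symplectic on the chart. The orbifold symplectic form on the new orbifold therefore necessarily differs from $\omega$ in a neighbourhood of each $D_i$ (and of $P$), and constructing it is a real step, not a Moser--Darboux formality: one uses the symplectic neighbourhood theorem for each $D_i$ to build a closed $2$-form supported near $D_i$ whose pullback to the charts restores nondegeneracy, and adds a small multiple of it to $\omega$ (the cohomology class changes by a multiple of $[D_i]$, or not at all if the correction is exact). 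Without this your construction produces a cyclic orbifold with the right isotropy data but no symplectic structure on it, which is the content actually needed downstream (Theorem~\ref{thm:sas-orb} requires a symplectic or K\"ahler orbifold, not a bare one).
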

Note that if one replaces the word "symplectic" by "K\"ahler" one obtains cyclic singular K\"ahler orbifold. If one takes $P=\emptyset$ one obtains smooth cyclic orbifolds as in \cite{K1}. Finally, it is important to understand that this theorem, indeed, shows how to construct K\"ahler or symplectic orbifolds which are non-smooth: one must construct the data
\begin{equation}
X,P,D_i\subset X, m_i, m=d(x)\prod_{i\in I_x}m_i,I_x=\{i\,|\,x\in D_i\},
\end{equation}\label{eqn:orb}
satisfying the restrictions given by Theorem \ref{thm:munoz}.  
\begin{definition}\label{def:seifert} Let $X$ be a cyclic $n$-orbifold. A Seifert bundle over $X$ is an oriented $(n+1)$--manifold endowed with a smooth $S^1$-action and a continuous map $\pi:M\rightarrow X$ such that for an orbifold chart $(U,\tilde U,\mathbb{Z}_m,\varphi)$ there is a commutative diagram 
$$
\CD
(S^1\times \tilde U)/\mathbb{Z}_m @>{\cong}>>\pi^{-1}(U)\\
@VVV @V{\pi}VV\\
\tilde U/\mathbb{Z}_m @>{\cong}>>U
\endCD
$$
where the action of $\mathbb{Z}_m$ on $S^1$ is by multiplication by $\exp(2\pi i/m)$, and the top diffeomorphism is $S^1$-equivariant.
\end{definition} 
The description of Sasakian manifolds is contained in the following result.

\begin{theorem}[{\cite[Theorems 7.5.1 and 7.5.2]{BG}}]\label{thm:sas-orb} 
Let $M$ be a manifold endowed with a quasi-regular Sasakian structure $(\eta,\xi, \phi,g)$. Then the space of leaves of the foliation $\mathcal{F}_{\xi}$  determined by the Reeb vector field has a natural structure of a cyclic K\"ahler orbifold, and the projection $M\rightarrow X$ is a Seifert bundle. Conversely, if $(X,\omega)$ is a cyclic K\"ahler orbifold and $M$ is the total space of the Seifert bundle
 determined by the class $[\omega]$ of the K\"ahler form, then $M$ admits a quasi-regular Sasakian structure.
\end{theorem}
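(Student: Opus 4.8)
The plan is to prove this as a two-way correspondence that extends the Boothby--Wang construction transversally to the orbifold setting: in both directions the essential object is the one-dimensional foliation $\mathcal{F}_{\xi}$ generated by the Reeb field, together with the geometry it carries in the transverse (contact) directions $\mathcal{D}=\ker\eta$. The two halves of the statement are then \emph{local} refinements of the classical fact, recorded around \eqref{epi}, that a transversely K\"ahler $S^1$-foliation with $d\eta=\pi^*\omega$ encodes a Sasakian metric.

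For the forward direction, I would first invoke quasi-regularity together with the fact that $\xi$ is Killing (which holds for any K-contact, hence Sasakian, structure): the flow of $\xi$ consists of isometries, and quasi-regularity forces every leaf to be a closed circle through which the flow passes at most $\delta$ times. This upgrades the flow to a \emph{locally free} $S^1$-action whose isotropy groups are finite subgroups of $S^1$, hence cyclic. The leaf space $X=M/S^1$ therefore inherits the structure of a cyclic orbifold, with each orbifold chart of the form $\tilde U/\mathbb{Z}_m$. To see that $X$ is K\"ahler, I would descend the transverse data: the tensor $\phi$ restricted to $\mathcal{D}$ is $\xi$-invariant and, by \eqref{alphaphi}, annihilates $\xi$, so it projects to an almost complex structure $J$ on $X$; the normality condition $N_\phi=-d\eta\otimes\xi$ is precisely what guarantees that the Nijenhuis tensor of $J$ vanishes, so $J$ is integrable. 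Since $d\eta$ satisfies $\iota_\xi d\eta=0$ (as $\phi\xi=0$) and is $\xi$-invariant, it descends to a closed $(1,1)$-form $\omega$ with $d\eta=\pi^*\omega$, and the compatibility $d\eta(\cdot,\phi\cdot)=g(\cdot,\cdot)$ on $\mathcal{D}$ shows $\omega$ is positive, i.e. K\"ahler. Finally, the local models $(S^1\times\tilde U)/\mathbb{Z}_m\to\tilde U/\mathbb{Z}_m$ produced by the slice theorem for the $S^1$-action are exactly the charts in Definition~\ref{def:seifert}, so $\pi:M\to X$ is a Seifert bundle.

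For the converse, I would reverse this construction. Starting from a cyclic K\"ahler orbifold $(X,\omega)$ with integral orbifold class $[\omega]$, one builds the Seifert bundle $M\to X$ classified by $[\omega]$ orbifold-locally as $(S^1\times\tilde U)/\mathbb{Z}_m$, and the $S^1$-action defines the Reeb field $\xi$. Choosing an orbifold connection whose curvature represents $\omega$ produces a $1$-form $\eta$ on $M$ with $\eta(\xi)=1$ and $d\eta=\pi^*\omega$. The integrable complex structure $J$ on $X$ pulls back to a complex structure on $\mathcal{D}=\ker\eta$, which together with $\phi(\xi)=0$ defines $\phi$, and I would set $g=\pi^*g_X+\eta\otimes\eta$. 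It then remains to verify the axioms \eqref{almostcontactmetric}: the contact-metric condition $g(\phi X,Y)=d\eta(X,Y)$ follows from $d\eta=\pi^*\omega$ and the K\"ahler compatibility downstairs, while normality $N_\phi=-d\eta\otimes\xi$ follows from the integrability of $J$ by the standard computation comparing $N_\phi$ with the Nijenhuis tensor of $J$. Quasi-regularity is automatic because all $S^1$-orbits are closed.

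I expect the main obstacle to be not the transverse K\"ahler bookkeeping — which is formally identical to the smooth Boothby--Wang case carried out on a chart $\tilde U$ — but the care required \emph{at the singular points}, where one must check that the chosen connection, the descended complex structure, and the metric are genuinely $\mathbb{Z}_m$-invariant, hence define honest orbifold tensors, and that the local Seifert charts glue $S^1$-equivariantly across the singular set $P$. In particular, verifying that the isotropy representation is by $\exp(2\pi i/m)$ as in Definition~\ref{def:seifert}, so that $X$ is \emph{cyclic} in the required sense, is where the argument genuinely uses that the isotropy of a locally free $S^1$-action is cyclic; the remaining verifications reduce, on each chart, to the regular case.
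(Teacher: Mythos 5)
Your proposal is correct and follows essentially the same route as the source: the paper does not prove this theorem but quotes it from Boyer--Galicki (Theorems 7.5.1 and 7.5.2), and the argument there is exactly the one you sketch --- quasi-regularity plus the Killing property of $\xi$ yields a locally free $S^1$-action with cyclic isotropy whose transverse K\"ahler data ($\phi|_{\mathcal D}$, $d\eta$, normality) descend to the leaf space, and conversely the orbifold Boothby--Wang construction with a connection form satisfying $d\eta=\pi^*\omega$ reassembles the Sasakian structure. Your closing remark correctly identifies where the orbifold case genuinely differs from the regular one (invariance of the tensors and equivariant gluing of the local models $(S^1\times\tilde U)/\ZZ_m$ near the isotropy set).
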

In the same way, one can describe quasi-regular K-contact manifolds replacing cyclic K\"ahler orbifolds by cyclic symplectic orbifolds (see \cite{M} for the details).

\subsection{Constructions of cyclic K\"ahler orbifolds and Seifert bundles}
An important basic tool of constructing cyclic K\"ahler  orbifolds in this work is by blowing-down complex surfaces along chains of smooth rational curves of negative self-intersection $<-1$
(i.e.\ no $(-1)$-curves). We freely use the theory of complex surfaces referring to \cite{BPV}, \cite{GS}, \cite{H}.

\begin{proposition}[\cite{BPV}] \label{prop:Hirz-Jung}
Consider the action of the cyclic group $\ZZ_m$ on $\CC^2$ given by $(z_1,z_2)\mapsto (\eta z_1,\eta^r z_2)$, where
$\eta=e^{2\pi i /m}$, $0<r<m$ and $\gcd(r,m)=1$.
 Then write a continuous fraction
 $$
  \frac{m}{r}=[b_1,\ldots, b_l]=b_1- \frac{1}{b_2-\frac{1}{b_3- \ldots}}
  $$
The resolution of $\CC^2/\ZZ_m$ has an exceptional divisor formed by a chain of 
smooth rational curves of self-intersection numbers
$-b_1,-b_2,\ldots,-b_l$.
\end{proposition}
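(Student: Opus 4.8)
The action of $\mathbb{Z}_m$ on $\mathbb{C}^2$ is $(z_1,z_2) \mapsto (\eta z_1, \eta^r z_2)$, $\eta = e^{2\pi i/m}$, $0 < r < m$, $\gcd(r,m)=1$. We expand $m/r = [b_1,\dots,b_l]$ as a (modified, all-minus) continued fraction, and claim the minimal resolution of $\mathbb{C}^2/\mathbb{Z}_m$ has exceptional divisor a chain of smooth rational curves with self-intersections $-b_1,\dots,-b_l$.

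Let me think about how I'd prove this.

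**Setup.** The quotient singularity $\mathbb{C}^2/\mathbb{Z}_m$ with this weight structure — this is the cyclic quotient singularity of type $\frac{1}{m}(1,r)$. The resolution is explicitly toric, so the cleanest approach is the toric one. Let me recall the toric picture.

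**Toric approach.** The group $\mathbb{Z}_m$ acting as $(z_1,z_2)\mapsto(\eta z_1,\eta^r z_2)$ — this corresponds to a lattice $N = \mathbb{Z}^2 + \mathbb{Z}\cdot\frac{1}{m}(1,r)$ containing the standard lattice $\mathbb{Z}^2$ as a sublattice of index $m$. The cone is $\sigma = \text{Cone}(e_1, e_2)$ (first quadrant). The affine toric variety $U_\sigma$ with respect to this enlarged lattice $N$ is precisely $\mathbb{C}^2/\mathbb{Z}_m$.

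To resolve: I need to subdivide $\sigma$ by adding rays through all primitive lattice points of $N$ on the boundary/interior, producing a smooth fan. The minimal resolution corresponds to taking the rays generated by the lattice points on the boundary of the convex hull of $(\sigma \cap N)\setminus\{0\}$ — the "Newton boundary."

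**Continued fractions enter.** This is where the continued fraction $\frac{m}{r}=[b_1,\dots,b_l]$ (with the all-minus convention shown) comes in. The primitive generators $v_0 = e_2, v_1, \dots, v_l, v_{l+1}=e_1$ of the new rays (ordered around the cone) satisfy the recursion
$$
v_{k-1} + v_{k+1} = b_k\, v_k,
$$
where the $b_k$ are exactly the continued-fraction coefficients. This recursion is the combinatorial heart of the matter: it's equivalent to the continued-fraction expansion, and establishing it is really the same computation as running the Euclidean-type algorithm on $(m,r)$.

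Let me sketch the steps.

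**Plan.**

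1. Identify $\mathbb{C}^2/\mathbb{Z}_m$ as the affine toric variety for the cone $\sigma$ = first quadrant in the over-lattice $N = \mathbb{Z}^2 + \mathbb{Z}\cdot\frac{1}{m}(1,r)$. I'd verify the coordinate ring of invariants matches $\sigma^\vee \cap M$ where $M$ is the dual lattice.

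2. Determine the minimal smooth subdivision: take all primitive $N$-points on the lower boundary of $\text{conv}((\sigma\cap N)\setminus\{0\})$. Order them $v_0=e_2, v_1,\dots,v_l, v_{l+1}=e_1$. Each adjacent pair $\{v_k,v_{k+1}\}$ generates a smooth cone (a $\mathbb{Z}$-basis of $N$) — this is what makes the resolution smooth, and it's forced by the boundary-of-convex-hull construction.

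3. Establish the recursion $v_{k-1}+v_{k+1}=b_k v_k$ with integers $b_k\geq 2$, and show $(b_1,\dots,b_l)$ is the Hirzebruch–Jung (all-minus) continued fraction of $m/r$. This is the main computational engine; I'd set up coordinates, observe $v_k = (p_k, q_k)$ and run the integer recursion, matching it to the Euclidean algorithm producing the $b_k$.

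4. Translate fan data to geometry. Each interior ray $v_k$ ($1\le k\le l$) gives an exceptional curve $E_k\cong\mathbb{P}^1$; adjacent rays share a point so the $E_k$ form a chain; and the toric self-intersection formula is exactly $E_k^2 = -b_k$, which is a direct reading of the recursion coefficient in step 3.

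**Where the difficulty sits.** The conceptual content is entirely in step 3: showing the convex-hull lattice points satisfy $v_{k-1}+v_{k+1}=b_k v_k$ with the $b_k$ matching the continued fraction, and that all $b_k\ge 2$ (no $(-1)$-curves, i.e.\ minimality). Steps 1, 2, 4 are the standard toric dictionary. For a survey of this type, though, the honest move — and the one Proposition~\ref{prop:Hirz-Jung} is attributed as — is to \emph{cite} \cite{BPV}, since this is the classical Hirzebruch–Jung resolution and the combinatorial verification, while elementary, is lengthy. So in practice I would give the toric reduction above as the conceptual skeleton and defer the continued-fraction recursion to the reference.
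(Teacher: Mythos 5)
The paper does not prove this proposition at all: it is stated as a classical result with a citation to \cite{BPV}, so there is no in-paper argument to compare against. Your toric outline is a correct and standard route to the result (identify $\CC^2/\ZZ_m$ with the affine toric surface of the first-quadrant cone in the over-lattice $N=\ZZ^2+\ZZ\cdot\frac1m(1,r)$, subdivide along the boundary of the convex hull of the nonzero lattice points, and read off the chain of rational curves with $E_k^2=-b_k$ from the recursion $v_{k-1}+v_{k+1}=b_kv_k$); for the record, \cite{BPV} itself establishes the statement by the original Hirzebruch--Jung method of explicit coordinate charts and iterated modifications rather than in toric language, but the two are equivalent. The one caveat is that, as you yourself note, your step~3 --- verifying that the convex-hull generators satisfy the recursion with $b_k\ge 2$, that consecutive pairs form $\ZZ$-bases of $N$, and that the $b_k$ are precisely the coefficients of the all-minus continued fraction of $m/r$ --- carries essentially all of the mathematical content, and you defer it to the reference; so what you have written is a correct proof skeleton with the engine left uninstalled, which is a perfectly reasonable stance for a survey-level cited proposition but would need that computation spelled out (e.g.\ by running the subtractive Euclidean algorithm on the coordinates of the $v_k$) to count as a self-contained proof.
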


\begin{proposition}[{\cite[Lemma 15]{CMST}}] \label{prop:constr-orbi} 
Conversely, let $X$ be a smooth complex surface containing a chain of smooth rational curves $E_1,\ldots, E_l$ of self-intersections
$-b_1,-b_2,\ldots$, $-b_l$, with all $b_i\geq 2$, intersecting transversally (so that $E_i\cap E_{i+1}$ are nodes, $i=1,\ldots, l-1$).
Let $\pi:X\to \bar X$ be the contraction of $E=E_1\cup\ldots \cup E_l$. Then $\bar X$ has a cyclic singularity at $p=\pi(E)$,
with an action given by Proposition \ref{prop:Hirz-Jung}. Moreover, if $D$ is a curve
intersecting transversally a tail of the chain (that is, either $E_1$ or $E_l$ at a non-nodal point),
then the push down curve $\bar D=\pi(D)$ is a sing-complex curve curve in $\bar X$ (in the sense of Definition \ref{def:sing}, with appropriate modifications in the complex case). 
\end{proposition}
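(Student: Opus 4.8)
The plan is to reduce the statement to the classical Grauert contraction criterion together with the Hirzebruch--Jung classification already recorded in Proposition \ref{prop:Hirz-Jung}. First I would invoke Grauert's criterion: a connected configuration $E=E_1\cup\cdots\cup E_l$ of compact curves on a smooth surface $X$ can be contracted to a normal point $p$ of an analytic surface $\bar X$ precisely when the intersection matrix $(E_i\cdot E_j)$ is negative definite. So the initial step is to verify that the tridiagonal matrix
$$
Q=\begin{pmatrix}
-b_1 & 1 & & \\
1 & -b_2 & 1 & \\
& 1 & \ddots & \\
& & & -b_l
\end{pmatrix}
$$
determined by the chain (off-diagonal entries $1$ because consecutive curves meet transversally at a single node, and $0$ for non-adjacent curves) is negative definite. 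Under the hypothesis $b_i\geq 2$ this is standard: one checks that $-Q$ is a symmetric, diagonally dominant matrix with positive diagonal, irreducible, hence positive definite by the irreducible-diagonal-dominance form of the Gershgorin argument. This produces the contraction $\pi\colon X\to\bar X$ and a normal singular point $p=\pi(E)$.

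Next I would identify the singularity as cyclic with the correct action. The point is that a normal surface singularity whose minimal resolution is a chain (an $A_l$-type dual graph) of rational curves with self-intersections $-b_1,\ldots,-b_l$, $b_i\geq 2$, is exactly a cyclic quotient singularity $\CC^2/\ZZ_m$; indeed the data $[b_1,\ldots,b_l]$ is the Hirzebruch--Jung continued-fraction expansion of $m/r$, which recovers $m$ and $r$ with $\gcd(r,m)=1$. This is the converse reading of Proposition \ref{prop:Hirz-Jung}: that proposition says the resolution of $\CC^2/\ZZ_m$ produces such a chain, and the uniqueness of the minimal resolution of a normal surface singularity (no $(-1)$-curves occur since all $b_i\geq2$, so the given chain is already minimal) guarantees that $\bar X$ has analytically the germ $\CC^2/\ZZ_m$ at $p$ with the diagonal action $(z_1,z_2)\mapsto(\eta z_1,\eta^r z_2)$.

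For the last assertion about the push-down curve, suppose $D$ meets a tail, say $E_1$, transversally at a non-nodal smooth point $q$ of $E_1$. Resolving the image of $q$ under $\pi$, the curve $\bar D=\pi(D)$ passes through $p$ and, in the local uniformizing chart $\CC^2\to\CC^2/\ZZ_m$, lifts to the smooth coordinate axis fixed by the isotropy $\ZZ_m$. I would verify this by tracking the strict transform: because $D$ meets only the extremal curve $E_1$ and does so transversally at one point away from the node $E_1\cap E_2$, the branch of $\bar D$ at $p$ corresponds to one of the two coordinate directions in the local model, which is precisely the fixed locus of the cyclic action. This exhibits $\bar D$ as a sing-complex curve in the sense of Definition \ref{def:sing}, since along it the isotropy subgroup fixes the curve pointwise.

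The main obstacle I expect is the second step, namely pinning down the precise cyclic action $(z_1,z_2)\mapsto(\eta z_1,\eta^r z_2)$ rather than merely asserting that the singularity is a cyclic quotient: one must match the combinatorial invariant $r$ read off from the continued fraction with the analytic normal form, and argue that the germ is analytically (not just topologically) isomorphic to $\CC^2/\ZZ_m$. This rests on the classification of Hirzebruch--Jung singularities and the uniqueness of the minimal good resolution, for which I would cite \cite{BPV}; the transversality bookkeeping for $\bar D$ is then routine once the local model is fixed.
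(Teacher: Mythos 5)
The survey does not reproduce a proof of this proposition---it is quoted from [CMST, Lemma~15]---but your argument is the standard and correct one for that lemma: Grauert/Artin contractibility from negative definiteness of the irreducibly diagonally dominant chain matrix, identification of the resulting germ as the Hirzebruch--Jung singularity $\CC^2/\ZZ_m$ with $m/r=[b_1,\ldots,b_l]$ via minimality of the chain and tautness of cyclic quotient singularities, and tracking the strict transform of $D$ to a coordinate axis of the local uniformizing chart. The only point to fix is your final sentence: the isotropy group $\ZZ_m$ does \emph{not} fix the pushed-down curve pointwise (it acts on the axis $\{z_2=0\}$ by $z_1\mapsto\eta z_1$, which is nontrivial since $\gcd(r,m)=1$); Definition~\ref{def:sing} requires only that the local lift of $\bar D$ be \emph{invariant} under the isotropy subgroup, which is what your coordinate-axis identification actually establishes.
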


 Let us mention the following. For the convenience of references, we will interchangebly follow the terminology and notation of \cite{BG}, \cite{K}, \cite{K1} and \cite{M}. If $X$ is a cyclic orbifold with singular set $P$ and a family of surfaces $D_i$  with multiplicities $m_i$ we will say that we are given a divisor $\mathop{\cup} D_i$ with multiplicities $m_i>1$. The formal sum
  \begin{equation}\label{eqn:Delta}
  \Delta=\sum_i\left(1-{\frac 1m_i}\right)D_i
  \end{equation}
will be called the {\it branch divisor}.

Given a cyclic K\"ahler orbifold $X$, with singular points $P$, and branch divisor (\ref{eqn:Delta}), we need an extra piece of information in order
to determine a Seifert bundle $M\to X$. 
For each point $x\in P$ with multiplicity $m=d m_1m_2$ ,
we have an adapted chart $U\subset \CC^2/\ZZ_m$ with action 
   \begin{equation}\label{eqn:Zm}
  \exp(2\pi i/m)(z_1,z_2)=(e^{2\pi i j_1/m}z_1,e^{2\pi i j_2/m}z_2),
  \end{equation}
so that the local model of the Seifert fibration is as in Definition \ref{def:seifert}. 
We call $j_x=(m,j_1,j_2)$ the {\it local invariants} at $x\in P$. Note that 
$j_1=m_1e_1$ and $j_2=m_2e_2$.
Assume that $D_1=\{(z_1,0)\}$ is one of the isotropy surfaces with multiplicity $m_1$. 
The local invariant of $D_1$ is by definition, $j_{D_1}=(m_1,j_2)$, where $j_2$ is considered modulo $m_1$. 
This also yields a {\it compatibility condition} for the local invariants of singular points and isotropy surfaces.

In order to construct Seifert bundles, we need to assign local invariants to each of the singular points $x\in P$ and each
of the isotropy surfaces $D_i\subset X$, in a compatible way. For this we need to choose for each $D_i$ some $j_i$
with $\gcd(j_i,m_i)=1$. Then we can assign local invariants at the singular points using the following result.

\begin{proposition}[{\cite[Proposition 25]{M}}] \label{prop:25-M}
Suppose that $X$ is a cyclic $4$-orbifold 
such that each singular point $x\in P$ lies in a single isotropy surface $D_i$,
if any.
Take integers $j_i$ with $\gcd(m_i,j_i) = 1$ for each $D_i$.  Then there exist local invariants for $X$.
\end{proposition}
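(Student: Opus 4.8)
The plan is to reduce the global assignment of local invariants to an independent computation at each point of $P$, exploiting the hypothesis that every $x\in P$ meets at most one isotropy surface. First I would partition $P$ into the points with $I_x=\emptyset$ and the points lying on a single surface $D_i$. For a point $x$ with $I_x=\emptyset$ the multiplicity is $m=d(x)$, no surface imposes any constraint, and one simply takes $(m,j_1,j_2)$ to be the exponents $(d(x),e_1,e_2)$ already supplied by the orbifold chart \eqref{eqn:Zm}. Since $\ZZ_{d(x)}$ acts freely off the origin we have $\gcd(m,j_1)=\gcd(m,j_2)=1$, so this triple defines an admissible Seifert model in the sense of Definition \ref{def:seifert}.

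The substantive case is a point $x$ lying on exactly one surface, say $D_i=\{(z_1,0)\}$, so that $m=d(x)\,m_i$ with $\gcd(d(x),m_i)=1$ by the structure of the multiplicity. Writing the sought invariant as $(m,j_1,j_2)$ with $j_1=m_i e_1$ and $j_2=e_2$ (here the second surface multiplicity is $1$), the requirement that the $z_1$-axis carry generic isotropy of order exactly $m_i$ reads $\gcd(m,j_1)=m_i$, i.e.\ $\gcd(e_1,d(x))=1$; the absence of a second isotropy surface reads $\gcd(m,j_2)=1$; and compatibility with $D_i$ demands $j_2\equiv j_i\pmod{m_i}$, where $j_i$ is the prescribed surface invariant with $\gcd(j_i,m_i)=1$. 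I would then solve for $j_2$ by the Chinese Remainder Theorem: since $\gcd(m_i,d(x))=1$, one may prescribe $j_2\equiv j_i\pmod{m_i}$ together with $j_2\equiv c\pmod{d(x)}$ for any unit $c$ modulo $d(x)$, and the resulting residue is automatically coprime to $m=d(x)m_i$, so $\gcd(m,j_2)=1$ holds for free. A parallel choice of $e_1$ a unit modulo $d(x)$ produces $j_1=m_ie_1$ with $\gcd(m,j_1)=m_i$.

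The point that needs care — and exactly where the single-surface hypothesis enters — is that this congruence is the \emph{only} constraint tying the point invariant to the surface data, so it can never conflict with a second condition. Were two isotropy surfaces to pass through $x$, their invariants would impose congruences on $j_1$ and $j_2$ simultaneously against the same residual $\ZZ_{d(x)}$-action, and disentangling those together with the coprimality requirements is precisely what the hypothesis sidesteps. I therefore expect the main (though essentially routine) obstacle to be the bookkeeping of normalization: matching the exponents $(j_1,j_2)$ of the chosen triple to the fixed orbifold action of \eqref{eqn:Zm} up to the allowed reparametrizations, and checking that the per-point choices, being mutually independent, do assemble into the single compatible system of local invariants asserted. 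Concatenating the choices over all $x\in P$ then yields local invariants for $X$, completing the argument.
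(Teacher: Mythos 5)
The survey itself does not prove Proposition \ref{prop:25-M}: it is quoted from \cite{M} and the text explicitly omits proofs of cited results, so your argument can only be measured against the intended construction rather than against a written proof here. Your overall strategy --- decoupling the assignment into an independent congruence problem at each $x\in P$, with the single-surface hypothesis guaranteeing that at most one surface congruence constrains the point --- is the right one and is surely the skeleton of the argument in \cite{M}.

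Two steps, however, do not hold up as written. First, the residue of $j_2$ modulo $d(x)$ is not a free unit $c$: the $\ZZ_m$-chart must induce, after dividing by the ramification subgroup $\ZZ_{m_i}\subset\ZZ_m$ (i.e.\ passing to the coordinates $(z_1,z_2^{m_i})$), exactly the $\ZZ_{d(x)}$-action on the underlying singularity that is already fixed by the orbifold; this forces the pair $(e_1,\ j_2 \bmod d(x))$ to equal $(\lambda e_1^0,\lambda e_2^0)$ for some unit $\lambda$ mod $d(x)$, where $(e_1^0,e_2^0)$ are the given exponents of \eqref{eqn:Zm}. Second, and more seriously, $\gcd(d(x),m_i)=1$ is not ``part of the structure of the multiplicity'': Theorem \ref{thm:munoz} only demands $\gcd(m_i,m_j)=1$ for intersecting surfaces, and the proof of Proposition \ref{prop:1-neg-pos-main} in this very survey applies Proposition \ref{prop:25-M} to a surface $D$ of multiplicity $m=2$ through the singular point $p$ of order $n$ with, explicitly, possibly $\gcd(n,m)>1$. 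So your two congruences $j_2\equiv j_i\pmod{m_i}$ and $j_2\equiv \lambda e_2^0\pmod{d(x)}$ live over moduli that need not be coprime, and plain CRT does not apply. The repair is to spend the remaining freedom, namely the unit $\lambda$ (equivalently the choice of $e_1$): pick $\lambda\equiv j_i\,(e_2^0)^{-1}$ modulo $g=\gcd(d(x),m_i)$, which is possible because $(\ZZ/d(x))^{*}\to(\ZZ/g)^{*}$ is onto; the two congruences then agree modulo $g$ and admit a common solution $j_2$, which is automatically a unit modulo both $m_i$ and $d(x)$ and hence modulo $m$. Without this adjustment your proof does not cover the cases in which the proposition is actually invoked.
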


Once compatible local invariants are fixed, a Seifert bundle is determined by its orbifold Chern class.
Given a Seifert bundle $\pi: M\rightarrow X$, the order of a stabilizer (in $S^1$) of any point $p$ in the fiber over $x\in X$ 
is denoted by $m=m(x)$, as in (\ref{eqn:Zm}).

\begin{definition} For a Seifert bundle $M\rightarrow X$ define the first Chern class as follows. Let $l=\operatorname{lcm}(m(x)\,|\,x\in X)$. 
Denote by $M/l$ the quotient of $M$ by $\ZZ_l\subset S^1$. Then $M/l\rightarrow X$ is a circle bundle with the first Chern class $c_1(M/l)\in H^2(X,\ZZ)$. 
Define
$$c_1(M)={\frac 1l}c_1(M/l)\in H^2(X,\mathbb{Q}).$$
\end{definition}
 
The first orbifold Chern class of the Seifert bundle can be calculated using the formula given by the following result, that we state
only for complex orbifolds.

\begin{proposition}[\cite{M}, Proposition 35]\label{prop:c1} Let $X$ be a cyclic $4$-orbifold with a complex structure 
and $D_i\subset X$ complex curves of $X$ which intersect transversally. 
Let $m_i>1$ such that $\gcd(m_i,m_j)=1$ if $D_i$ and $D_j$ intersect. Suppose that there are given local invariants $(m_i,j_i)$ for each $D_i$ and $j_p$ for every singular point $p$, which are compatible. Choose any $0<b_i<m_i$ such that $j_ib_i\equiv 1 (\operatorname{mod}\,m_i).$   Let ${B}$ be a complex line bundle over $X$. Then there exists a Seifert bundle $M\rightarrow X$ with the given local invariants and the first orbifold  Chern class
  $$c_1(M)=c_1({B})+\sum_i\frac{b_i}{m_i}[D_i].$$
The set of all such Seifert bundles forms a principal homogeneous space under\linebreak $H^2(X,\ZZ)$, where the action corresponds to changing ${B}$.

Moreover, if $X$ is a K\"ahler cyclic orbifold and $c_1(M)=[\omega]$ for the orbifold K\"ahler form, then $M$ is Sasakian.
\end{proposition}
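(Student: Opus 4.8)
The plan is to realize $M$ as the unit circle bundle of an orbifold complex line bundle and to classify such bundles through their local Seifert invariants, following Koll\'ar's scheme adapted to the cyclic singular setting. First I would record the local models: the data $(m_i,j_i)$ and $j_p$ prescribe, via Definition \ref{def:seifert}, the germ of the Seifert fibration along each isotropy surface $D_i$ and over each singular point $p$, and the assumed compatibility guarantees that these germs agree on the overlaps (the surfaces $D_i$ passing through the $\ZZ_m$-chart at $p$). Since $X$ carries a complex structure, a Seifert $S^1$-bundle arises as the unit circle bundle of an orbifold complex line bundle $L$, so it suffices to produce $L$ with the prescribed local behaviour and to read off its orbifold first Chern class in $H^2(X,\QQ)$.

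Next I would compute the fractional part of the Chern class. Restricting $L$ to a small disk transverse to $D_i$, the isotropy group $\ZZ_{m_i}$ acts on the fibre with weight $j_i$; in the standard presentation $(S^1\times\tilde U)/\ZZ_{m_i}$ of Definition \ref{def:seifert} this weight corresponds, after passing from the group action on the fibre to the degree of the line bundle, to the $\QQ$-divisor contribution $\frac{b_i}{m_i}[D_i]$, where $b_i$ is the inverse weight fixed by $j_i b_i\equiv 1\ (\operatorname{mod}\,m_i)$. At each singular point $p$, with chart $\CC^2/\ZZ_m$ and $m=d\,m_1 m_2$, the compatibility of the local invariants ensures that the contributions read off from the chart and from the two surfaces through $p$ agree, so the global class $\sum_i\frac{b_i}{m_i}[D_i]\in H^2(X,\QQ)$ is well defined.

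I would then add the integral part and establish existence together with the torsor structure. Gluing the prescribed local models---possible precisely because the invariants are compatible (such compatible data being supplied in the basic case by Proposition \ref{prop:25-M})---yields one reference Seifert bundle carrying exactly the fractional data; tensoring its line bundle by a genuine complex line bundle $B$ pulled back from $X$ shifts the Chern class by the integral class $c_1(B)\in H^2(X,\ZZ)$, giving $c_1(M)=c_1(B)+\sum_i\frac{b_i}{m_i}[D_i]$. For the classification I would use the exact sequence relating orbifold circle bundles to ordinary ones: two Seifert bundles with identical local invariants differ by a genuine complex line bundle, while a nontrivial $B$ yields a non-isomorphic bundle, so the set of such Seifert bundles is a principal homogeneous space under $H^2(X,\ZZ)$, the action being the change of $B$. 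The final assertion is then immediate from Theorem \ref{thm:sas-orb}: when $X$ is K\"ahler and $c_1(M)=[\omega]$, its converse direction equips the total space $M$ with a quasi-regular Sasakian structure.

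I expect the principal obstacle to be the fractional Chern-class computation of the second step --- establishing that the action weight $j_i$ produces exactly the coefficient $b_i/m_i$ with $j_i b_i\equiv 1\ (\operatorname{mod}\,m_i)$, and verifying that the orientation and weight conventions at the singular points (where $m=d\,m_1 m_2$) are mutually consistent so that the $\QQ$-class descends globally. Everything else reduces to formal properties of orbifold line bundles and to the previously established Theorem \ref{thm:sas-orb}.
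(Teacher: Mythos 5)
The survey does not actually prove this proposition: it is quoted verbatim from \cite{M} (Proposition 35 there), and the text explicitly states that proofs of such background results are omitted, so there is no in-paper argument to compare yours against. Judged against the standard argument in the cited source (which follows Koll\'ar's Seifert-bundle theory adapted to orbifolds with isolated singular points), your outline is the right one: build the circle bundle on the smooth locus, glue in the local models $(S^1\times\tilde U)/\ZZ_{m_i}$ prescribed by the compatible invariants, read off the fractional contribution $\frac{b_i}{m_i}[D_i]$ from the inverse weight $b_i$ with $j_ib_i\equiv 1\ (\mathrm{mod}\ m_i)$, and obtain the torsor structure by twisting with an honest line bundle $B$; the Sasakian statement is indeed just the converse direction of Theorem \ref{thm:sas-orb} applied when $c_1(M)=[\omega]$. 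Two points deserve more care than your sketch gives them. First, you defer exactly the computation that carries the content of the formula --- that the quotient construction converts the action weight $j_i$ into the coefficient $b_i/m_i$ (this is where the definition $c_1(M)=\frac{1}{l}c_1(M/l)$ must actually be unwound on a transverse disk). Second, the freeness half of the torsor claim (``a nontrivial $B$ yields a non-isomorphic bundle'') cannot be read off from $c_1(M)\in H^2(X,\QQ)$ alone, since torsion classes in $H^2(X,\ZZ)$ die there; it requires the classification of Seifert bundles by the cohomology of the complement of the isotropy locus together with the local data, as in Koll\'ar's exact sequence. Neither issue is a wrong turn, but both are the places where a full write-up would have to do real work.
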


Here $[D_i]$ are understood as cohomology classes in $H^2(X,\mathbb{Q})$.

\begin{proposition}[{\cite[Lemma 34]{M}}]\label{prop:lemma34} 
Assume that we are given a Seifert bundle $M\rightarrow X$ over a cyclic orbifold with the set of isotropy points $P$ and 
branch divisor $\sum (1-{\frac 1{m_i}})D_i$. Let $\mu=\operatorname{lcm}(m_i)$. Then $c_1(M/\mu)=\mu \, c_1(M)$ 
is integral in $H^2(X-P,\ZZ)$.
\end{proposition}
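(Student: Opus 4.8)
The plan is to push everything onto the open set $X-P$, where the only isotropy of the Seifert $S^1$-action comes from the branch surfaces $D_i$, whose orders all divide $\mu$ by construction. First I would pin down the stabilizer orders $m(x)$ over $X-P$. By the local model in Definition \ref{def:seifert}, over an orbifold chart with uniformizing group $\ZZ_m$ the fibre is $(S^1\times\tilde U)/\ZZ_m$, and the $S^1$-stabilizer of a point over the centre is exactly the subgroup $\ZZ_m\subset S^1$; thus $m(x)$ is the orbifold multiplicity at $x$. Over a point lying on no isotropy surface this is $1$, over a point of $D_i\setminus P$ it is $m_i$, while the larger orders $m=d(x)\prod_{i\in I_x}m_i$ occur only at the finitely many singular points $x\in P$. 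Hence for every $x\in X-P$ the order $m(x)$ equals $1$ or some $m_i$, and in either case divides $\mu=\operatorname{lcm}(m_i)$.

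Since $\ZZ_\mu\subset S^1$ then contains every point stabilizer of the action on $\pi^{-1}(X-P)$, passing to the quotient by $\ZZ_\mu$ removes all isotropy: the residual circle $S^1/\ZZ_\mu$ acts freely, so $(M/\mu)|_{X-P}\to X-P$ is an honest principal $S^1$-bundle. Its first Chern class is therefore a genuine integral class in $H^2(X-P,\ZZ)$, which is precisely the integrality assertion of the proposition.

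It then remains to identify this class with $\mu\,c_1(M)$. With $l=\operatorname{lcm}(m(x)\mid x\in X)$ as in the definition of $c_1(M)$ one has $\mu\mid l$, and over $X-P$ the circle bundle $M/l$ is the further quotient $(M/\mu)/\ZZ_{l/\mu}$. For circle bundles, quotienting by $\ZZ_k\subset S^1$ raises the transition functions to the $k$-th power (under the identification $S^1/\ZZ_k\cong S^1$, $[z]\mapsto z^k$) and therefore multiplies $c_1$ by $k$; the model case is that $S^3/\ZZ_k$ is the circle bundle of Euler number $k$ over $S^2$. Thus $c_1(M/l)|_{X-P}=(l/\mu)\,c_1(M/\mu)|_{X-P}$, and combining this with $c_1(M)=\tfrac1l\,c_1(M/l)$ gives $c_1(M/\mu)|_{X-P}=\mu\,c_1(M)|_{X-P}$.

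The only genuinely delicate points are the local bookkeeping in the first step — verifying that, once $P$ is deleted, no stabilizer order fails to divide $\mu$ — and keeping the direction of the Chern-class scaling correct in the last step (multiplication by $k$, not division). Both reduce to unwinding the local Seifert models and the definition of $c_1$, so I expect the proposition to be a careful restriction argument rather than one requiring new input.
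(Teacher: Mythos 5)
Your proof is correct, and it is the standard argument for this statement; the survey itself states the result as \cite[Lemma 34]{M} without reproducing a proof, so there is no in-paper argument to diverge from. The three ingredients you use --- (i) every stabilizer order over $X-P$ divides $\mu$, so $(M/\mu)|_{X-P}$ is a genuine principal $S^1$-bundle with integral Chern class; (ii) quotienting a principal circle bundle by $\ZZ_k\subset S^1$ multiplies $c_1$ by $k$ (with the correct direction of scaling, as in $S^3/\ZZ_k\to S^2$); and (iii) the definition $c_1(M)=\tfrac1l c_1(M/l)$ --- are exactly what the cited lemma rests on. One small refinement: over $X-P$ the stabilizer at a point of $D_i\cap D_j$ (when these surfaces meet nicely away from $P$) has order $m_im_j$ rather than $m_i$, so your case list is not quite exhaustive; but the standing hypothesis $\gcd(m_i,m_j)=1$ for intersecting surfaces gives $m_im_j\mid\mu$, so the divisibility you need, and hence the whole argument, is unaffected.
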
  
Also, we need a method of calculating the cohomology of the total spaces of Seifert fibrations, in particular, we need to have a condition which ensures $H_1(M,\mathbb{Z})=0$. Such method is contained in the following theorem.
 Recall that an element $a$ in a free abelian group $A$ is called {\it primitive} if it cannot be represented 
 as $a=kb$ with non-trivial $b\in A$, $k\in\mathbb{N}$, $k>1$. 
\begin{theorem}[\cite{M}]\label{thm:H1(M)} Suppose that $\pi: M\rightarrow X$ is a quasi-regular Seifert bundle over a cyclic orbifold $X$ with isotropy surfaces $D_i$ and set of singular points $P$. Let $\mu=\operatorname{lcm}(m_i)$. Then $H_1(M,\ZZ)=0$ if and only if 
\begin{enumerate}
\item $H_1(X,\ZZ)=0,$
\item $H^2(X,\ZZ)\rightarrow \oplus H^2(D_i,\ZZ_{m_i})$ is onto,
\item $c_1(M/\mu)\in H^2(X-P,\ZZ)$ is primitive.
\end{enumerate}
Moreover, $H_2(M,\ZZ)=\ZZ^k\oplus(\, \mathop{\oplus}\limits_i\ZZ_{m_i}^{2g_i})$, $g_i=\text{genus of}\,\,D_i$, $k+1=b_2(X).$
\end{theorem}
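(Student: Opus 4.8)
The plan is to compute $H_1(M)$ and $H_2(M)$ by first passing to an honest circle bundle, where the Gysin sequence applies, and then patching in the orbifold corrections coming from the multiple fibres along the branch divisor. Throughout set $X^\circ=X\setminus P$ and $M^\circ=\pi^{-1}(X^\circ)$. Since $P$ is finite, $\pi^{-1}(P)$ is a union of circles, a subset of real dimension $\le 1$ inside the $5$-manifold $M$; removing a subset of real codimension $\ge 4$ leaves $H_k$ unchanged for $k\le 2$, so $H_1(M)\cong H_1(M^\circ)$, $H_2(M)\cong H_2(M^\circ)$, and likewise $H_j(X)\cong H_j(X^\circ)$ for $j\le 2$. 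Hence I may work entirely over $X^\circ$, where the only isotropy is the cyclic stabilizer $\ZZ_{m_i}$ along each $D_i$ and the coprimality $\gcd(m_i,m_j)=1$ governs the intersections.

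First I would analyze the associated honest circle bundle. Over $X^\circ$ every stabilizer order divides $\mu=\lcm(m_i)$, so the quotient $N:=M^\circ/\ZZ_\mu$ carries a free residual $S^1$-action and is a genuine circle bundle $S^1\to N\to X^\circ$ whose Euler class is $e=c_1(M/\mu)\in H^2(X^\circ,\ZZ)$, integral by Proposition \ref{prop:lemma34}. The Gysin sequence
$$0\to H^1(X^\circ)\xrightarrow{\pi^*}H^1(N)\to H^0(X^\circ)\xrightarrow{\cup e}H^2(X^\circ)\to H^2(N)\to H^1(X^\circ)$$
reads off the behaviour of $N$: the class $e$ is non-torsion iff $\cup e\colon H^0\to H^2$ is injective, and the torsion introduced into $\coker(\cup e)$ is exactly $\ZZ/k$ when $e$ is $k$ times a primitive class, so primitivity of $e$ (condition (3)) is equivalent to the absence of that torsion. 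Rationally the same sequence fixes the free ranks: with $e\ne 0$ one gets $b_2(N)=b_2(X)-1=k$, which will account for the free summand $\ZZ^k$ of $H_2(M)$, while the surjection $\pi_*\colon H_1(M)\twoheadrightarrow H_1(X)$ (connected fibres) shows that $H_1(M)=0$ forces $H_1(X)=0$ (condition (1)).

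Next I would recover $M^\circ$ from $N$. The quotient $M^\circ\to N$ is free off $\bigcup_i\pi^{-1}(D_i)$ and has stabilizer $\ZZ_{m_i}$ over $D_i$; equivalently $M^\circ\to X^\circ$ is a free circle bundle over the orbifold-smooth locus $X^\circ\setminus\bigcup_iD_i$ with a multiple fibre of multiplicity $m_i$ inserted along each $D_i$. I would compute the effect of these multiple fibres by a Mayer--Vietoris decomposition of $M^\circ$ into $V=\pi^{-1}(X^\circ\setminus\bigcup_iD_i)$, where the action is free, and tubular pieces $U_i=\pi^{-1}(\text{nbhd of }D_i)$, each of which deformation retracts onto a circle bundle over $D_i$ whose monodromy transverse to $D_i$ has order $m_i$. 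The mod-$m_i$ reduction forced by the multiple fibre produces the groups $H^*(D_i,\ZZ_{m_i})$: the terms $H^2(D_i,\ZZ_{m_i})$ are hit by the restriction $H^2(X,\ZZ)\to\bigoplus_iH^2(D_i,\ZZ_{m_i})$, and its surjectivity (condition (2)) is precisely what forces the corresponding torsion to die in $H_1(M)$ rather than survive, whereas the complementary $H^1(D_i,\ZZ_{m_i})\cong\ZZ_{m_i}^{2g_i}$ survives into $H_2(M)$ and yields the claimed torsion summand $\bigoplus_i\ZZ_{m_i}^{2g_i}$. Assembling the three inputs gives $H_1(M)=0$ iff (1), (2), (3) hold, and simultaneously pins down $H_2(M)=\ZZ^k\oplus\bigoplus_i\ZZ_{m_i}^{2g_i}$.

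The main obstacle is the integral bookkeeping in this last step, above all at the points where two surfaces $D_i,D_j$ meet. There the local model is the link in $\CC^2/\ZZ_d$ and two multiple fibres collide, so one must check that their contributions do not interfere; the hypothesis $\gcd(m_i,m_j)=1$ at intersection points is exactly what makes the local torsion split as an internal direct sum of independent $\ZZ_{m_i}$ and $\ZZ_{m_j}$ pieces with no cross terms, so that the global answer organizes cleanly into one summand per $D_i$. Verifying these local links and confirming that the Mayer--Vietoris connecting homomorphisms reproduce precisely the maps appearing in conditions (2) and (3) is where the real work lies; once that is in place, the remainder is the formal Gysin and long-exact-sequence machinery assembled above.
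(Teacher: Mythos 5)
Your overall strategy (pass to $X^\circ=X\setminus P$, use the genuine circle bundle $M/\mu\to X^\circ$ and its Gysin sequence for the free part, then correct for the multiple fibres along the $D_i$) is a reasonable one and is close in spirit to the argument of Koll\'ar as extended by Mu\~noz, which the survey quotes without proof; there the multiple-fibre corrections are packaged not by Mayer--Vietoris but by the Leray spectral sequence of $\pi$ together with a local computation of $R^1\pi_*\ZZ_M$ (a sheaf that is $\ZZ$ generically and gets multiplied by $m_i$ along $D_i$), which is what makes the map $H^2(X,\ZZ)\to\oplus H^2(D_i,\ZZ_{m_i})$ of condition (2) appear as an honest differential rather than by pattern-matching. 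However, as written your proposal has a concrete error and a genuine gap.

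The error is the claim that $H_j(X)\cong H_j(X^\circ)$ for $j\le 2$. The points of $P$ are cyclic quotient singularities with local model $\CC^2/\ZZ_d$, $d=d(p)>1$, so their links are lens spaces $S^3/\ZZ_d$ with $H_1=\ZZ_d$; the underlying space of $X$ is not a manifold there, and the long exact sequence of the pair $(X,X^\circ)$ gives $H_2(X,X^\circ)\cong\oplus_p\ZZ_{d(p)}$, so removing $P$ can change both $H_1$ and $H_2$ integrally (only rationally is the removal harmless, since lens spaces are rational homology spheres). This is not a cosmetic point: the theorem deliberately states condition (1) and (2) on $X$ but condition (3) on $X-P$, and in applications (see the proof of Proposition \ref{prop:1-neg-pos-main}, where $H^2(X,\ZZ)=\ZZ\langle E_0\rangle$ while $H^2(X-\{p\},\ZZ)=\ZZ\langle C\rangle$ with $E_0=nC$) the two groups genuinely differ; your reduction erases the very distinction the statement depends on. Relatedly, your Gysin analysis of primitivity of $e=c_1(M/\mu)$ in terms of the torsion of $\coker(\cup\, e)$ tacitly assumes $H^2(X^\circ,\ZZ)$ is torsion-free, which fails for the same reason. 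The second issue is that the heart of the theorem --- the precise appearance of conditions (2) and (3) in the exact sequences and the identification of the torsion of $H_2(M)$ with $\oplus_i\ZZ_{m_i}^{2g_i}$ --- is asserted rather than derived: the Mayer--Vietoris connecting maps are never computed, the passage from $H_*(N)$ back to $H_*(M^\circ)$ (a quotient by a non-free $\ZZ_\mu$-action, hence not an isomorphism integrally) is not addressed, and you yourself flag the local analysis at points of $D_i\cap D_j$ and of $P$ as ``where the real work lies.'' As it stands this is a plausible plan of proof, not a proof.
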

Thus, if one wants to check the assumptions of Theorem \ref{thm:H1(M)}, 
one in particular calculates $H^2(X-P,\ZZ)$ and checks the primitivity of $c_1(M/\mu)$ 
in $H^2(X-P,\ZZ)$. 

\subsection{Fundamental groups of total spaces of Seifert bundles}\label{subsec:fund}
Many questions in topology of Sasakian manifolds are much more difficult to answer under the condition of 1-connectedness. Since the basic tool of the studying Sasakian manifolds is via Seifert fibration $M\rightarrow X$, one needs methods of calculating the fundamental group of $M$. In partuclar we need to check that $\pi_1(M)=1$. We will systematically use the following. 
\begin{definition} The orbifold fundamental group $\pi_1^{\orb}(X)$ is defined as 
   $$
   \pi_1^{\orb}(X)=\pi_1(X-(\Delta\cup P))/\langle \gamma_i^{m_i}=1\rangle,
   $$
where $\langle\gamma_i^{m_i}=1\rangle$ denotes the following relation on $\pi_1(X-(\Delta\cup P))$: for any small loop $\gamma_i$ around a surface $D_i$ in the branch divisor, one has $\gamma_i^{m_i}=1$.
\end{definition}

We will use without further notice the following exact sequence
  $$
  \cdots\rightarrow \pi_1(S^1)=\ZZ\rightarrow\pi_1(M)\rightarrow\pi_1^{\orb}(X)\rightarrow 1.
  $$
It can be found in \cite[Theorem 4.3.18]{BG}. It is easy to see that if $H_1(M,\ZZ)=0$ and $\pi_1^{\orb}(X)$ is abelian, then  
$\pi_1(M)$ must be trivial.  This holds since if $H_1(M,\ZZ)=0$, then $\pi_1(M)$ has no abelian quotients. As $\pi_1^{\orb}(X)$ is assumed abelian,
we find that $\pi_1(M)$ is a quotient of $\ZZ$, hence again abelian. This implies that $\pi_1(M)=0$.

The following result will be used in the calculations of the orbifold fundamental groups.
\begin{proposition}[\cite{N}]\label{prop:nori}  If $Z$ is a smooth simply-connected projective surface with smooth complex curves $C_i$ intersecting transversally and satisfying $C_i^2>0$, then $\pi_1(Z-(C_1\cup \ldots\cup C_r))$
is abelian.
\end{proposition}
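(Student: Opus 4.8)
\emph{The plan is to reduce the statement to the centrality of meridians and then to extract that centrality from the positivity hypothesis $C_i^2>0$.} Write $D=C_1\cup\dots\cup C_r$ and $G=\pi_1(Z\setminus D)$. Since the components meet transversally, $D$ has simple normal crossings, and the standard description of the fundamental group of the complement of a normal crossing divisor gives that $G\to\pi_1(Z)$ is surjective with kernel normally generated by the meridians $\gamma_1,\dots,\gamma_r$ (a small loop $\gamma_i$ encircling $C_i$). As $Z$ is simply connected, $\pi_1(Z)=1$, so $G$ is the normal closure of $\{\gamma_1,\dots,\gamma_r\}$. Consequently, \emph{it suffices to prove that each $\gamma_i$ is central in $G$}: once every meridian is central its conjugates coincide with itself, so $G$ is generated by the pairwise commuting elements $\gamma_1,\dots,\gamma_r$ and is therefore abelian. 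At a transversal intersection $C_i\cap C_j$ the local model of the complement is $(\CC^*)^2$, so $[\gamma_i,\gamma_j]=1$ already holds locally; the real content is \emph{global} centrality.

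To prove that $\gamma_i$ is central I would work with a tubular neighbourhood $N_i$ of $C_i$. Its boundary $\partial N_i\to C_i$ is the circle bundle with Euler number $C_i^2$, and in $\pi_1(\partial N_i)=\langle\alpha_1,\beta_1,\dots,\alpha_{g_i},\beta_{g_i},\gamma_i\mid \gamma_i\ \text{central},\ \prod_k[\alpha_k,\beta_k]=\gamma_i^{\,C_i^2}\rangle$ the fibre class $\gamma_i$ is central by construction. The key geometric input is a Lefschetz-type surjectivity statement: the map $\pi_1(\partial N_i)\to G$ induced by $\partial N_i\hookrightarrow Z\setminus D$ should hit a generating set, so that centrality of $\gamma_i$ upstairs descends to centrality of $\gamma_i$ in $G$. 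This is exactly where $C_i^2>0$ enters: positivity makes $C_i$ behave like an ample (at least big) divisor, so the complement is ``affine-like'' near $C_i$ and its $\pi_1$ is controlled from the boundary at infinity. To make this precise I would choose a Lefschetz pencil in a very ample system $|NH|$ with $N\gg0$; after blowing up its base points one obtains a fibration $\hat Z\to\PP^1$ whose general fibre $F$ is a smooth curve meeting each $C_i$ transversally in $N(H\cdot C_i)>0$ points, and the Zariski--van Kampen theorem presents the fundamental group of the fibred complement (which surjects onto $G$) from that of a general fibre complement together with the monodromy relations around the finitely many singular fibres. Simple connectivity of $Z$ is then used once more: each base loop $\alpha_k,\beta_k$, being null-homotopic in $Z$, bounds a disc meeting $D$ in finitely many points, hence equals a product of conjugates of meridians; combined with centrality this collapses the commutator $\prod_k[\alpha_k,\beta_k]$ and turns the bundle relation into $\gamma_i^{\,C_i^2}=1$, consistent with $G$ being finite abelian.

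\emph{The main obstacle is the Lefschetz/positivity surjectivity step}, that is, showing that the boundary circle bundles (equivalently, a general fibre of the pencil) surject onto $\pi_1(Z\setminus D)$ and that the accompanying monodromy relations suffice to force every meridian to be central. All the weight of the hypothesis $C_i^2>0$ sits here: for a curve of non-positive self-intersection the complement can carry a large, even free, nonabelian fundamental group, so no such surjectivity and no centrality can hold. Controlling the monodromy of the pencil carefully enough to deduce centrality for every component simultaneously --- rather than merely the local commutation at the nodes --- is the crux of the argument.
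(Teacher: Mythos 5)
The paper does not prove this statement at all: it is quoted verbatim from Nori's paper \cite{N}, where it is a deep theorem (the weak Lefschetz theorem for complements, whose general form requires $C^2>2r(C)$ for each component, $r(C)$ being the number of nodes of $C$; for smooth components this is exactly $C_i^2>0$). Your reduction is correct and is indeed the right skeleton of Nori's argument: since $\pi_1(Z)=1$, the group $G=\pi_1(Z-D)$ is normally generated by the meridians, so it suffices to show each meridian is central, and centrality would follow from centrality of the fibre class in a boundary circle bundle together with a surjectivity statement of Lefschetz type. But that surjectivity (equivalently, the global centrality of the meridians) \emph{is} the theorem; everything else in your write-up is formal. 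You openly defer it to ``a Lefschetz pencil plus Zariski--van Kampen plus controlling the monodromy,'' without carrying out the monodromy analysis or explaining how $C_i^2>0$ (as opposed to ampleness of $C_i$, or positivity of the pencil class $NH$) actually enters that computation. As it stands the proposal reduces Nori's theorem to a statement equivalent to Nori's theorem and stops, so there is a genuine gap exactly where you say the crux is.

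Two secondary points. First, for $r\geq 2$ the boundary $\partial N_i$ is \emph{not} contained in $Z-D$: it meets each $C_j$ ($j\neq i$) in the fibres over $C_i\cap C_j$, so you must work with $\partial N_i$ minus those fibres. This is a circle bundle over a punctured surface, hence trivial; $\gamma_i$ is still central there, but the relation $\prod_k[\alpha_k,\beta_k]=\gamma_i^{\,C_i^2}$ disappears. Second, and relatedly, your concluding claims that $\gamma_i^{\,C_i^2}=1$ and that $G$ is finite are false in general: for two smooth conics $Q_1,Q_2\subset\CP^2$ one has $\pi_1(\CP^2-(Q_1\cup Q_2))\cong\ZZ\oplus\ZZ_2$, which is infinite and in which no meridian is torsion of order dividing $Q_i^2=4$. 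Neither error affects the (correct) main reduction, but both should be repaired if you intend to turn the outline into a proof; in practice the efficient course is simply to cite \cite{N}, as the paper does.
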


\subsection{The sum up of the construction} Thus, to construct a Seifert bundle $M\rightarrow X$ determining a quasi-regular Sasakian structure on $M$ one needs to choose:
\begin{itemize}
\item orbifold data (\ref{eqn:orb})
\item compatible local data $(m_i,j_i)$ and $j_p$ for each singular point $p\in P$, as in Proposition \ref{prop:25-M},
\item any integers $0<b_i<m_i,j_ib_i\cong 1 (\operatorname{mod} m_i)$. 

\end{itemize}
These data will determine a Seifert bundle with $c_1(M)$ given by the formula in Proposition \ref{prop:c1}. Choosing these data in an appropriate way, one constructs quasi-regular Sasakian manifolds. The authors of \cite{CMST},\cite{M},\cite{M1}, \cite{MRT}, \cite{MST1} solve some problems posed in \cite{BG}    choosing  these data in a way to get the required conditions on the second Betti number $b_2(M)$ and on the triviality of $H_1(M,\mathbb{Z})$ using Theorem \ref{thm:H1(M)}. By the method of Subsection \ref{subsec:fund} one derives $\pi_1(M)=0$.

\section{ Existence problems for Smale-Barden manifolds}\label{sec:Smale-Barden}
\subsection{Smale-Barden manifolds} 
A $5$-dimensional simply connected manifold $M$ is called a {\it Smale-Barden manifold}. These manifolds are classified by their second homology group over $\ZZ$ and the so-called {\it Barden invariant} \cite{BG}. 
 In more detail,
let $M$ be a compact smooth oriented simply connected $5$-manifold. 
Let us write $H_2(M,\ZZ)$ as a direct sum of cyclic groups of prime  power order
  \begin{equation*} 
  H_2(M,\ZZ)=\ZZ^k\oplus \big( \mathop{\oplus}\limits_{p,i}\, \ZZ_{p^i}^{c(p^i)}\big),
  \end{equation*}
where $k=b_2(M)$. Choose this decomposition in a way that the second Stiefel-Whitney class map
  $w_2: H_2(M,\ZZ)\rightarrow\ZZ_2$
is zero on all but one summand $\ZZ_{2^j}$. The value of $j$ is unique, it
is denoted by $i(M)$ and is called the Barden invariant. The fundamental question arises, 
{\it which Smale-Barden manifolds admit Sasakian structures?} In the subsequent sections we present results of \cite{CMST}, \cite{MT},\cite{M1},\cite{MRT},\cite{MST1} in the context of this general problem. These articles solve several particular problems posed in \cite{BG}. Each of these questions is discussed separately.  

\subsection{Definite Sasakian structures} 
Recall that the Reeb vector field $\xi$ on a co-oriented contact manifold $(M,\eta)$ determines a $1$-dimensional foliation 
$\mathcal{F}_{\xi}$ called the {\it characteristic foliation}. If we are given a  manifold $M$ with a Sasakian structure $(\eta,\xi,\phi, g)$, then 
one can define  {\it basic Chern classes} $c_k(\mathcal{F}_{\xi})$ of $\mathcal{F}_{\xi}$ which are elements of the 
basic cohomology $H^{2k}_B(\mathcal{F}_{\xi})$ (see \cite[Theorem/Definition 7.5.17]{BG}).     
We say that a Sasakian structure is positive (negative) if $c_1(\mathcal{F}_{\xi})$ can be represented by a positive 
(negative) definite $(1,1)$-form. A Sasakian structure is called null, if $c_1(\mathcal{F}_{\xi})=0$. If none of these, it is called indefinite. 

\subsubsection{Rational homology spheres}
\begin{question}[{\cite[Open Problems 10.3.3 and 10.3.4]{BG}}]\label{quest:bg-main}  
Which simply connected rational homology $5$-spheres admit negative Sasakian structures?
\end{question}

Contrary to this question, there are very few rational homology spheres which admit positive Sasakian structures.

\begin{theorem}[{\cite[Theorem 10.2.19]{BG}, \cite[Theorem 1.4]{K}}]\label{thm:pos-sas} 
Suppose that a rational homology sphere $M$ admits a positive Sasakian structure. 
Then $M$ is spin and $H_2(M,\ZZ)$ is one of the following:
 $$
 0,\,\, \ZZ_m^2,\,\,  \ZZ_5^4,\,\, \ZZ_4^4,\,\,
 \ZZ_3^4,\,\,\ZZ_3^6,\,\, \ZZ_3^8,\,\, \ZZ_2^{2n},\,\, 
 $$
where $n>0$, and $m\geq 2$, $m$ not divisible by $30$. Conversely, all these cases do occur.
\end{theorem}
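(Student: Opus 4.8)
The plan is to pass from the contact-topological data to the transverse Kähler geometry of the Reeb foliation and then read off $H_2(M,\ZZ)$ from the Seifert data. By Proposition~\ref{prop:quasi-regular} we may assume the positive Sasakian structure is quasi-regular, so Theorem~\ref{thm:sas-orb} presents $M$ as a Seifert bundle $\pi\colon M\to X$ over a cyclic Kähler orbifold $X$ of complex dimension $2$. Positivity of $c_1(\mathcal{F}_{\xi})$ means precisely that the basic first Chern class is represented by a positive $(1,1)$-form; under the identification of basic cohomology with orbifold cohomology this says that the orbifold anticanonical class $-K_X^{\mathrm{orb}}$ is positive, i.e. $X$ is a log del Pezzo (Fano) orbifold surface. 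The rational homology sphere hypothesis is the statement $b_i(M)=0$ for $0<i<5$; in particular $b_2(M)=0$, so $H_2(M,\ZZ)$ is a finite group.

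First I would extract the shape of this finite group from Theorem~\ref{thm:H1(M)}, which gives $H_2(M,\ZZ)=\ZZ^{k}\oplus\big(\bigoplus_i\ZZ_{m_i}^{2g_i}\big)$ with $k+1=b_2(X)$, where the $D_i$ are the branch curves of multiplicity $m_i$ and $g_i$ is the genus of $D_i$. Since $b_2(M)=k=0$ we obtain $b_2(X)=1$: the underlying surface of $X$ has Picard rank one. Hence the entire group is the torsion $\bigoplus_i\ZZ_{m_i}^{2g_i}$, and the problem reduces to enumerating the branch data $(D_i,m_i)$ on a Fano orbifold surface of Picard rank one.

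The \emph{core} of the argument is then the geometric classification. A Fano orbifold surface of Picard rank one has underlying space either $\CP^2$ or a surface with cyclic quotient singularities (a weighted projective plane or a suitable quotient), and ampleness of $-K_X^{\mathrm{orb}}=-K_S-\sum_i(1-\tfrac1{m_i})D_i$ bounds the branch data. On $\CP^2$ this reads $\sum_i(1-\tfrac1{m_i})\deg D_i<3$; since a smooth plane curve contributes torsion only when $\deg D_i\ge 3$ (genus $\ge 1$), the bound leaves finitely many admissible configurations — a single cubic of arbitrary multiplicity (giving $\ZZ_m^2$), a quartic of multiplicity $2$ or $3$ (giving $\ZZ_2^6$ or $\ZZ_3^6$), and a quintic of multiplicity $2$ (a $\ZZ_2^{2n}$ term) — while the singular del Pezzo bases account for the remaining sporadic groups $\ZZ_5^4,\ZZ_4^4,\ZZ_3^4,\ZZ_3^8$ through curves of genus $2$, $3$, $4$ on quotient surfaces. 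The restriction that $m$ not be divisible by $30$ is forced at this stage: it reflects that the quotient singularities compatible with a positive $-K_X^{\mathrm{orb}}$ and with the primitivity condition of Theorem~\ref{thm:H1(M)} cannot simultaneously involve the primes $2,3,5$, mirroring the inequality $\tfrac12+\tfrac13+\tfrac15>1$.

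Finally I would settle the spin assertion and the converse. Along the Reeb direction $TM\cong\pi^{*}TX\oplus\underline{\RR}$, so $w_2(M)=\pi^{*}w_2(X)$ with $w_2(X)$ the mod-$2$ reduction of $-K_X^{\mathrm{orb}}$; since $H^2(M,\ZZ_2)\cong\Hom(H_2(M,\ZZ),\ZZ_2)$ is governed entirely by the torsion, I would verify $w_2(M)=0$ directly from the Seifert data on each entry of the finite list. For the converse one realizes each group by constructing the corresponding Fano orbifold together with a Seifert bundle whose first orbifold Chern class equals the orbifold Kähler class, invoking Proposition~\ref{prop:c1} to produce the Sasakian structure and Theorem~\ref{thm:H1(M)} to confirm the prescribed $H_2$. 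The \textbf{main obstacle} is the classification step: matching every Fano orbifold of Picard rank one to its torsion group, and in particular pinning down exactly which multiplicities occur (the $30\nmid m$ condition), is the genuinely hard part and is where the bulk of Kollár's analysis is concentrated.
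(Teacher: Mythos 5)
First, a point of order: the paper does not prove Theorem~\ref{thm:pos-sas} at all --- it is quoted from Boyer--Galicki and from Koll\'ar, so there is no internal proof to compare you against. Your outline does reproduce the correct architecture of Koll\'ar's argument: pass to a quasi-regular structure (Proposition~\ref{prop:quasi-regular}), realize $M$ as a Seifert bundle over a cyclic K\"ahler orbifold $X$ (Theorem~\ref{thm:sas-orb}), identify positivity with ampleness of $-K_X^{\orb}$, use $b_2(M)=0$ together with Theorem~\ref{thm:H1(M)} to force $b_2(X)=1$ and to read off $H_2(M,\ZZ)=\bigoplus_i\ZZ_{m_i}^{2g_i}$, and then classify the admissible branch data on a log del Pezzo orbifold of Picard number one. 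That is the right skeleton, and you correctly identify where the difficulty sits.

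But the two places where the theorem actually lives are not established. (1) The classification step: you assert the base is $\CP^2$, a weighted projective plane, ``or a suitable quotient,'' but a log del Pezzo surface of Picard number one with cyclic quotient singularities is a priori a far larger class, and bounding the genus of a branch curve of given multiplicity on such a surface is precisely the hard content of Koll\'ar's proof; your inequality $\sum_i(1-\tfrac1{m_i})\deg D_i<3$ covers only the $\CP^2$ case, and even there the bookkeeping is slightly off (a quartic with $m=2$ yields $\ZZ_2^6$, which belongs to the $\ZZ_2^{2n}$ family rather than being a separate entry, while the sporadic groups $\ZZ_5^4,\ZZ_4^4,\ZZ_3^4,\ZZ_3^8$ need genus-$2$ and genus-$4$ curves, which do not exist as smooth plane curves). (2) The condition $30\nmid m$: the heuristic ``$\tfrac12+\tfrac13+\tfrac15>1$'' is not an argument, and the mechanism you propose (incompatibility with the primitivity condition of Theorem~\ref{thm:H1(M)}) is neither shown to be necessary nor sufficient; in Koll\'ar's treatment this drops out of an explicit determination of which log del Pezzo bases can carry an elliptic branch curve whose class surjects onto $H^2(D,\ZZ_m)$ for $m$ divisible by $2$, $3$ and $5$ simultaneously, together with explicit constructions for the converse. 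The spin claim is likewise asserted (``verify $w_2(M)=0$ on each entry'') rather than derived --- in \cite{BG} it is a structural consequence of positivity, not a case check --- and the converse direction (that every listed group is realized) is only gestured at. So: right strategy, but the proposal defers exactly the steps that constitute the proof.
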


\begin{theorem}[{\cite[Theorem 10.3.14]{BG}}]\label{thm:pos-sphere} 
Let $M$ be a rational homology sphere. If it admits a Sasakian structure, then it is either positive, 
and the torsion in $H_2(M,\ZZ)$ is restricted by Theorem \ref{thm:pos-sas}, or it is negative. There exist infinitely many such manifolds which admit negative Sasakian structures but no positive Sasakian structures. There exist infinitely many positive rational homology spheres which also admit negative Sasakian structures.
\end{theorem}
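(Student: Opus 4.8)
The plan is to deduce the dichotomy from transverse Hodge theory and then to produce both families of examples as Seifert bundles over plane-curve orbifolds. By Proposition \ref{prop:quasi-regular} I may assume the structure is quasi-regular, so by Theorem \ref{thm:sas-orb} the manifold is a Seifert bundle $\pi\colon M\to X$ over a compact cyclic K\"ahler $4$-orbifold $X$; the basic cohomology $H_B^\ast(M)$ is the orbifold cohomology of $X$, and the basic first Chern class $c_1(\mathcal F_\xi)$ is the orbifold first Chern class $c_1^{\orb}(X)$, a $(1,1)$-class. Since $M$ is a rational homology sphere, $b_2(M)=0$, and Theorem \ref{rel:betti-basic-numbers} (with $n=2$, $r=2$) gives $b_2^B(M)=b_0^B(M)+b_2(M)=1$. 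Thus $H_B^2(M)\cong H^2(X;\RR)$ is one-dimensional; it is spanned by the transverse K\"ahler class $[d\eta]_B\ne 0$ and carries a positive-definite intersection form.

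It follows that $c_1^{\orb}(X)=\lambda\,[d\eta]_B$ for a unique $\lambda\in\RR$. If $\lambda>0$ the class is represented by a positive $(1,1)$-form, so the structure is positive; if $\lambda<0$ it is negative; the indefinite case is impossible, because a nonzero class in a one-dimensional definite space is automatically definite. The remaining case $\lambda=0$ (null) would force $c_1^{\orb}(X)=0$, i.e.\ $X$ of Kodaira dimension $0$; but every compact K\"ahler (orbifold) surface with numerically trivial canonical class has $b_2\ge 2$ (the minimum being the bi-elliptic type), contradicting $b_2(X)=1$. Hence the structure is positive or negative, and in the positive case the list of admissible torsion groups of $H_2(M;\ZZ)$ is exactly the content of the cited Theorem \ref{thm:pos-sas}, which I invoke directly.

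For infinitely many negative rational homology spheres admitting no positive Sasakian structure, I would build Seifert bundles over $X=\CP^2$ (so $b_2(X)=1$) branched along a single smooth quartic $D$ ($\deg D=4$, genus $g=3$) with multiplicity $m$. The orbifold canonical class $K_{\CP^2}+(1-\tfrac1m)D$ is ample precisely when $4(1-\tfrac1m)>3$, i.e.\ $m\ge 5$, so the structure is negative by the dichotomy above. Choosing local invariants and the line bundle $B$ in Proposition \ref{prop:c1} so that $c_1(M)=[\omega]$ is an orbifold K\"ahler class makes $M$ Sasakian, and Theorem \ref{thm:H1(M)} yields $H_1(M;\ZZ)=0$ once $\gcd(4,m)=1$ (for the surjectivity onto $H^2(D;\ZZ_m)=\ZZ_m$) and $c_1(M/m)$ is primitive; since $\pi_1(\CP^2\setminus D)$ is abelian (Proposition \ref{prop:nori}), the method of Subsection \ref{subsec:fund} gives $\pi_1(M)=1$. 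Then $H_2(M;\ZZ)=\ZZ_m^{2g}=\ZZ_m^{6}$, and for (infinitely many) odd $m\ge 5$ this group does not occur in the list of Theorem \ref{thm:pos-sas}; these manifolds are therefore negative but not positive.

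Finally, to obtain infinitely many manifolds carrying Sasakian structures of both signs, I would realize one group from the positive list, say $H_2=\ZZ_m^2$ with $m\equiv 5\pmod 6$, in two ways over $\CP^2$. A single branch cubic ($\deg 3$, genus $1$) of multiplicity $m$ gives $c_1^{\orb}=\tfrac3m H>0$, hence a positive structure with $H_2(M;\ZZ)=\ZZ_m^2$. Keeping this cubic but adding auxiliary branch lines $L_1,L_2$ (genus $0$, multiplicities $2$ and $3$, so that the $\gcd$-conditions of Theorem \ref{thm:munoz} hold) contributes no torsion yet makes $(1-\tfrac1m)3+(1-\tfrac12)+(1-\tfrac13)>3$, so $K+\Delta$ becomes ample and the structure negative, while still $H_2(M;\ZZ)=\ZZ_m^2$ and the (spin) Barden invariant is unchanged. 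By the Smale--Barden classification the two total spaces are the same $5$-manifold, which thus admits both a positive and a negative Sasakian structure; letting $m$ range gives infinitely many. The dichotomy is soft once $b_2^B(M)=1$ is known; the real obstacle is the existence half, namely choosing the branch data, compatible local invariants and orbifold Chern class so that all three conditions of Theorem \ref{thm:H1(M)} hold at once --- especially the primitivity of $c_1(M/\mu)$ in $H^2(X-P;\ZZ)$ at the singular points created by intersecting branch curves --- while simultaneously steering the sign of $c_1^{\orb}(X)$ and the precise torsion of $H_2(M;\ZZ)$.
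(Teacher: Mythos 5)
The paper does not actually prove this statement: it is imported verbatim from \cite[Theorem 10.3.14]{BG}, and the dichotomy part is separately quoted as Proposition \ref{prop:can-sphere}. Your reconstruction of the two families of examples is essentially sound and matches the Seifert-bundle strategy of Section \ref{sec:seifert} and of Theorem \ref{thm:neg-pos-main}: the quartic in $\CP^2$ with odd multiplicity $m\geq 5$ gives negative structures with $H_2(M,\ZZ)=\ZZ_m^6$, which is absent from the list of Theorem \ref{thm:pos-sas}, and the device of adding genus-zero branch curves with small pairwise coprime multiplicities to push $K_X^{\orb}$ into the ample range without changing $H_2(M,\ZZ)$ is exactly how \cite{MST1} produces negative structures on positive manifolds. (Two technical caveats there: Proposition \ref{prop:25-M} as stated requires each point of higher isotropy to lie on a single isotropy surface, which fails at $C\cap L_1$, $C\cap L_2$, $L_1\cap L_2$, so you need the general compatibility condition for local invariants from \cite{M}; and primitivity of $c_1(M/\mu)$ must be arranged with $\mu=6m$, which does work since $\gcd(18,m)=1$.)

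The genuine gap is in the dichotomy, specifically in your exclusion of the null case. The claim that every compact K\"ahler orbifold surface with numerically trivial canonical class has $b_2\geq 2$ is false: take $X=\CP^2$ with branch divisor $\Delta=\frac12\,C$ for a smooth sextic $C$, so that $K_X^{\orb}=-3H+\frac12\cdot 6H=0$ while $b_2(X)=1$. (The parenthetical about bi-elliptic surfaces is also off target, since those have $b_1=2$, which is already excluded by $b_1(M)=b_1^B(M)=0$.) This is not a repairable slip within your argument: the Seifert bundle over $(\CP^2,\frac12 C)$ with $c_1(M)=H$ is the free $\ZZ_2$-quotient of the circle bundle over the K3 double cover of $\CP^2$ branched in $C$, hence a rational homology $5$-sphere with $\pi_1(M)=\ZZ_2$ carrying a \emph{null} Sasakian structure. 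So the trichotomy genuinely degenerates without the simple connectivity that is implicit in the Smale--Barden context, and ruling out the null case must use $H_1(M,\ZZ)=0$ in an essential way; in \cite{BG} this goes through the classification of null Sasakian $5$-manifolds as $\#_k(S^2\times S^3)$ with $2\leq k\leq 21$, hence $b_2(M)\geq 2$, a substantive input rather than a one-line consequence of $b_2(X)=1$. The first half of your dichotomy --- that $b_2^B(M)=1$ forces $c_1(\mathcal{F}_{\xi})=\lambda\,[d\eta]_B$, so the structure is definite or null --- is correct and is the content of Proposition \ref{prop:can-sphere}.
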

One can see that  any torsion group 
which is realizable by a simply connected  Sasakian rational homology sphere $M$ but 
which does not appear in the list given by Theorem \ref{thm:pos-sas} gives an example of an answer to Question \ref{quest:bg-main}.

 The first result on negative Sasakian structures on rational homotopy spheres was obtained by R. Gomez \cite{G}.
\begin{theorem}\label{thm:g} Any simply connected rational homotopy sphere from the list in Theorem \ref{thm:pos-sas} except, possibly, $\mathbb{Z}_m^2,m<5$ and $\mathbb{Z}_2^{2n}$, admits both negative and positive Sasakian structures.
\end{theorem}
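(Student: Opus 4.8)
The positive half of the statement requires no new work: the converse direction of Theorem~\ref{thm:pos-sas} asserts that every group in that list is realised by a simply connected rational homology $5$-sphere carrying a positive Sasakian structure, and since such manifolds are spin, the Smale--Barden classification identifies them (up to diffeomorphism, via $H_2$ and the vanishing Barden invariant) with the manifolds named in Theorem~\ref{thm:g}. Thus the entire content lies in producing \emph{negative} Sasakian structures, and the plan is to build them as Seifert bundles over cyclic K\"ahler orbifolds of \emph{general type}, using the machinery of Section~\ref{sec:seifert}. The organising principle is the dictionary of Theorem~\ref{thm:sas-orb} between the sign of a quasi-regular Sasakian structure on $M$ and the sign of the orbifold first Chern class of its base $X$: the transverse Chern class $c_1(\mathcal{F}_\xi)$ equals $c_1^{\orb}(X)=-K_X^{\orb}$, where $K_X^{\orb}=K_X+\sum_i(1-\tfrac1{m_i})D_i$, so $M$ is negative precisely when $K_X^{\orb}$ is ample (the orbifold is of general type).

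For each admissible torsion group $G$ it therefore suffices to exhibit a cyclic K\"ahler orbifold $X$ with (i) $b_2(X)=1$, so that the formula $H_2(M,\ZZ)=\ZZ^k\oplus\bigoplus_i\ZZ_{m_i}^{2g_i}$ of Theorem~\ref{thm:H1(M)} has $k=b_2(X)-1=0$ and $M$ is a rational homology sphere; (ii) isotropy surfaces $D_i$ of genera $g_i$ and multiplicities $m_i$ with $\bigoplus_i\ZZ_{m_i}^{2g_i}=G$; and (iii) $K_X^{\orb}$ ample. Reading off (ii), the cases $\ZZ_5^4,\ZZ_4^4,\ZZ_3^4$ need a single isotropy curve of genus $2$ (with $m=5,4,3$), $\ZZ_3^6$ one of genus $3$, $\ZZ_3^8$ one of genus $4$, the group $\ZZ_m^2$ one of genus $1$, and the group $0$ no torsion-contributing curve at all. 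The excluded families $\ZZ_m^2$ with $m<5$ and $\ZZ_2^{2n}$ lie outside the range where (iii) can be met, for the reason explained at the end.

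To realise this I would start from a smooth simply connected projective surface $Z$ containing a smooth curve $D$ of the prescribed genus together with disjoint chains $E_1\cup\cdots\cup E_l$ of smooth rational curves of self-intersection $\le-2$, arranged so that $D$ meets only a tail of each chain transversally at a non-nodal point, as in Proposition~\ref{prop:constr-orbi}. Contracting the chains via Propositions~\ref{prop:Hirz-Jung} and~\ref{prop:constr-orbi} yields a cyclic orbifold $X$ with isolated cyclic quotient singularities and with $\bar D$ a sing-complex isotropy curve; each contraction lowers $b_2$ by the length of its chain, and $Z$ together with the chains is chosen so that $b_2(X)=1$. The singular points, through the discrepancies they contribute to $K_X^{\orb}$, are exactly the freedom used to force $K_X^{\orb}$ ample even when $D$ alone (e.g.\ an elliptic $D$ in the genus-$1$ case) would leave the bare log-canonical class non-positive. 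Having fixed $X$, I assign compatible local invariants by Proposition~\ref{prop:25-M} and choose the Seifert bundle by Proposition~\ref{prop:c1} with $c_1(M)$ proportional to $K_X^{\orb}$, an orbifold K\"ahler class by (iii); the last clause of Proposition~\ref{prop:c1} then makes $M$ Sasakian, and negative since $c_1(\mathcal{F}_\xi)=-K_X^{\orb}<0$.

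Finally I would verify that $M$ is simply connected. For $H_1(M,\ZZ)=0$ I check the three conditions of Theorem~\ref{thm:H1(M)}: $H_1(X,\ZZ)=0$ (inherited from $Z$, as contraction creates no new $1$-cycles), surjectivity of $H^2(X,\ZZ)\to\bigoplus_iH^2(D_i,\ZZ_{m_i})$, and primitivity of $c_1(M/\mu)$ in $H^2(X-P,\ZZ)$ — the last arranged by a suitable choice of the line bundle $B$ in Proposition~\ref{prop:c1}. To pass from $H_1=0$ to $\pi_1=0$, the exact sequence of Subsection~\ref{subsec:fund} shows that once $\pi_1^{\orb}(X)$ is abelian (via Proposition~\ref{prop:nori}, applied to the configuration of curves of positive self-intersection inside the simply connected surface, together with the relations $\gamma_i^{m_i}=1$), $\pi_1(M)$ is an abelian quotient of $\ZZ$, hence trivial. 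The genuine obstacle is the simultaneous realisation of steps (ii) and (iii): producing the \emph{exact} genera $2$ and $4$ (not genera of smooth plane curves) by smooth isotropy curves inside a surface that, after the contractions needed to reach $b_2=1$, still carries an ample $K_X^{\orb}$ and an abelian $\pi_1^{\orb}$. Balancing the discrepancies of the cyclic singularities against the multiplicity $m$ and the genus of $D$ to keep $K_X^{\orb}$ positive is the crux, and it is precisely the failure of this balance for genus $1$ with $m<5$ and for $m=2$ of arbitrary genus that accounts for the exceptions in the statement.
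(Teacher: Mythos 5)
Your strategy---realise each torsion group in Theorem \ref{thm:pos-sas} as a Seifert bundle over a cyclic K\"ahler orbifold with ample $K_X^{\orb}$---is viable in principle, and your treatment of the positive half (quoting the converse of Theorem \ref{thm:pos-sas} plus the Smale--Barden classification of spin simply connected $5$-manifolds) is fine. But this is not the route behind Theorem \ref{thm:g}: that result is Gomez's \cite{G}, and his construction goes through links of isolated weighted homogeneous hypersurface singularities (equivalently, Lorentzian Sasaki--Einstein metrics), where negativity is detected by comparing the degree of the polynomial with the sum of the weights. The orbifold/Seifert machinery of Section \ref{sec:seifert} is what \cite{MST1} (reproduced here as Proposition \ref{prop:1-neg-pos-main}) later uses to settle precisely the cases Gomez's method left open. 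So what you have written is essentially a proof plan for Theorem \ref{thm:neg-pos-main} transplanted onto Theorem \ref{thm:g}.

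Judged as a proof, the proposal has two genuine gaps. First, the orbifolds are never actually produced: for each group one must exhibit a concrete surface $Z$, a curve of the exact required genus, and chains of $(-b_i)$-curves to contract, and then verify all three hypotheses of Theorem \ref{thm:H1(M)}, the ampleness of $K_X^{\orb}$, and the abelianness of $\pi_1^{\orb}(X)$. In the paper's one worked example (a Hirzebruch surface with $E_\infty$ blown down) each of these is a nontrivial computation --- $C^2=\frac1n$, $K_X=-(n+2)C$, the condition $\gcd(1+n\beta,m)=1$ for surjectivity onto $H^2(D,\ZZ_m)$, and a B\'ezout argument to make $c_1(M/\mu)$ primitive --- and asserting that such things ``can be arranged'' is where all the work lies. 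Second, your closing explanation of the exceptions is incorrect: you claim the ampleness balance fails for $\ZZ_2^{2n}$ and $\ZZ_m^2$, $m<5$, yet Proposition \ref{prop:1-neg-pos-main} achieves exactly this balance for $\ZZ_2^{2n}$ by adjoining auxiliary \emph{rational} isotropy curves $D_i\equiv E_0$ of large pairwise coprime multiplicities, which push $K_X^{\orb}$ positive without adding torsion since $g_i=0$. The exceptions in Theorem \ref{thm:g} mark the limits of Gomez's link construction, not of the orbifold method; your own method, correctly executed, proves the stronger Theorem \ref{thm:neg-pos-main}, and a proof of Theorem \ref{thm:g} that purports to explain why those cases must be excluded cannot be right.
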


In the smaller class of semi-regular Sasakian structures, Question \ref{quest:bg-main} is answered by the following result. 

\begin{theorem}[\cite{MT}]\label{thm:main2} 
Let $m_i\geq 2$ be pairwise coprime, and $g_i={\frac 12}(d_i-1)(d_i-2)$. 
Assume that $\gcd(m_i,d_i)=1$. Let $M$ be a Smale-Barden manifold with $H_2(M,\ZZ)
=\mathop{\oplus}\limits_{i=1}^r\ZZ_{m_i}^{2g_i}$ and spin, with the exceptions
$\ZZ_m^2, \ZZ_2^{2n}, \ZZ_3^6$. Then $M$ admits a negative semi-regular Sasakian structure. 
Conversely, if $M$ is a simply-connected rational homology $5$-sphere admitting a 
semi-regular Sasakian structure, then it must satisfy the above assumptions.
\end{theorem}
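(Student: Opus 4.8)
The plan is to realize the prescribed Smale-Barden manifold $M$ as the total space of a Seifert bundle over a carefully chosen cyclic K\"ahler orbifold, and then to arrange the branch divisor so that the negativity of the basic first Chern class is forced. First I would construct the base orbifold concretely. The natural choice is to start from $\CP^2$ (or a rational surface of large enough $b_2$) and to take as isotropy surfaces a collection of smooth curves $D_i$ of degree $d_i$, pairwise intersecting transversally, with multiplicities $m_i$. The condition $g_i=\frac12(d_i-1)(d_i-2)$ is exactly the genus of a smooth plane curve of degree $d_i$, which matches the torsion summand $\ZZ_{m_i}^{2g_i}=\ZZ_{m_i}^{2g_i}$ predicted for $H_2(M,\ZZ)$ by the last clause of Theorem \ref{thm:H1(M)}. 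The pairwise coprimality $\gcd(m_i,m_j)=1$ is imposed so that Theorem \ref{thm:munoz} applies and so that the intersection points of the $D_i$ become honest cyclic singular points of controlled multiplicity $m=\prod_{i\in I_x}m_i$.

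Next I would equip this orbifold with a Seifert bundle whose orbifold Chern class is negative. Using Proposition \ref{prop:c1}, one chooses the auxiliary line bundle $B$ and the local data $(m_i,j_i)$ together with the residues $b_i$ so that
$$
c_1(M)=c_1(B)+\sum_i\frac{b_i}{m_i}[D_i]
$$
is a negative multiple of the orbifold canonical class; concretely one wants $c_1(M)$ to be represented by a negative definite $(1,1)$-form so that, by the definition of negativity and the final sentence of Proposition \ref{prop:c1}, the resulting Sasakian structure is negative. Because the base here is a blow-up of $\CP^2$ with an ample anticanonical-type choice reversed in sign, arranging negativity reduces to an inequality on $c_1(B)$ and the degrees $d_i$; I would solve this inequality by taking $c_1(B)$ a sufficiently negative multiple of the hyperplane class. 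The semi-regularity is guaranteed because the base is a \emph{smooth} cyclic orbifold in the sense of \cite{MRT} (the isotropy locus has the nice normal-crossing structure), as noted after Proposition \ref{prop:quasi-regular}.

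I would then verify the simple-connectedness and the homology. For $H_1(M,\ZZ)=0$ I would check the three conditions of Theorem \ref{thm:H1(M)}: condition (1) holds since the base is rational, condition (2) is a surjectivity statement on $H^2$ that one checks cohomologically from the degrees $d_i$, and condition (3), primitivity of $c_1(M/\mu)$ in $H^2(X-P,\ZZ)$, is the delicate point that must be engineered by the choice of $b_i$ and $c_1(B)$. To pass from $H_1=0$ to $\pi_1=0$ I would use Proposition \ref{prop:nori} to conclude that $\pi_1^{\orb}(X)$ is abelian (the $D_i$ have $D_i^2>0$ as plane curves of positive degree), and then invoke the exact sequence of Subsection \ref{subsec:fund} exactly as explained there. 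The homology formula in Theorem \ref{thm:H1(M)} then delivers $H_2(M,\ZZ)=\ZZ^k\oplus\bigoplus_i\ZZ_{m_i}^{2g_i}$, and by arranging $b_2(X)=1$ one kills the free part, leaving precisely the prescribed torsion. The spin condition is a parity statement on the Barden invariant that I would match separately by controlling $w_2$.

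The hard part will be the converse. Here one assumes $M$ is a simply-connected rational homology $5$-sphere carrying a semi-regular Sasakian structure and must show its torsion has the stated form with the listed exceptions removed. The strategy is to run the forward construction in reverse: by Theorem \ref{thm:sas-orb} the structure presents $M$ as a Seifert bundle over a \emph{smooth} cyclic K\"ahler orbifold, so the isotropy surfaces $D_i$ are smooth curves on a smooth complex surface; the homology formula of Theorem \ref{thm:H1(M)} forces each torsion summand to be of the form $\ZZ_{m_i}^{2g_i}$ with $g_i$ the genus of $D_i$. The genuine difficulty is excluding $\ZZ_m^2$, $\ZZ_2^{2n}$ and $\ZZ_3^6$: these require a finer analysis showing that no smooth curve of the requisite genus can sit inside a smooth cyclic orbifold realizing a rational homology sphere with those specific torsion groups, presumably via an adjunction/positivity obstruction together with the coprimality constraints $\gcd(m_i,d_i)=1$. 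I expect this exclusion, rather than the existence half, to be the main obstacle, since it is a negative statement that must rule out \emph{every} semi-regular construction at once.
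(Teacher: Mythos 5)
The survey does not reproduce a proof of Theorem \ref{thm:main2} (it is quoted from \cite{MT}), but the machinery it sets up --- and the worked example in the proof of Proposition \ref{prop:1-neg-pos-main} --- makes clear how negativity is actually achieved, and here your proposal contains a genuine error. You write that one should choose the auxiliary line bundle $B$ so that $c_1(M)=c_1(B)+\sum_i\frac{b_i}{m_i}[D_i]$ is ``represented by a negative definite $(1,1)$-form,'' taking $c_1(B)$ a sufficiently negative multiple of the hyperplane class. This conflates two different Chern classes. The first Chern class of the Seifert bundle must be \emph{positive}: by Proposition \ref{prop:c1} one needs $c_1(M)=[\omega]$ for an orbifold K\"ahler form in order for $M$ to be Sasakian at all (and, by Theorem \ref{thm:H1(M)}(3), $\mu\,c_1(M)$ must moreover be primitive). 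Negativity of the Sasakian structure is instead the statement that $c_1(\mathcal{F}_\xi)=\pi^*c_1^{\orb}(X)$ is negative, i.e.\ that $K_X^{\orb}=K_X+\sum_i\bigl(1-\frac{1}{m_i}\bigr)D_i$ is ample (Proposition \ref{prop:c1neg}). On $X=\CP^2$ this is the purely numerical condition $-3+\sum_i\bigl(1-\frac{1}{m_i}\bigr)d_i>0$, which is controlled by the degrees $d_i$ and multiplicities $m_i$ of the branch divisor and has nothing to do with the sign of $c_1(B)$. Making $c_1(M)$ negative as you propose would destroy the Sasakian property rather than produce a negative one. This confusion also hides from you the reason for the exceptional cases: $\ZZ_m^2$ ($d=3$), $\ZZ_3^6$ ($d=4$, $m=3$) and the relevant $\ZZ_2^{2n}$ configurations are exactly those forced patterns of plane curves for which $\sum\bigl(1-\frac1{m_i}\bigr)d_i<3$, so $K_X^{\orb}$ is anti-ample and the structure is necessarily positive (consistently with Proposition \ref{prop:can-sphere}).

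For the converse your sketch is too thin at the decisive step. Saying ``run the construction in reverse'' does not explain why $g_i$ must be a plane-curve genus $\frac12(d_i-1)(d_i-2)$: one must first argue that the base of the Seifert bundle is a \emph{smooth} cyclic K\"ahler orbifold (semi-regularity), that $H_1(M,\ZZ)=0$ together with $k=0$ forces $b_2(X)=1$ and $H_1(X,\ZZ)=0$ via Theorem \ref{thm:H1(M)}, and that a smooth simply connected projective surface with $b_2=1$ is $\CP^2$, so the isotropy surfaces are smooth plane curves; only then does the genus--degree formula apply, and condition (2) of Theorem \ref{thm:H1(M)} yields $\gcd(m_i,d_i)=1$ and the pairwise coprimality. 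Finally, a smaller point: the transverse intersections $D_i\cap D_j$ are \emph{not} singular points of the underlying space in the sense of the set $P$ of Theorem \ref{thm:munoz} --- they are normal-crossing points of a smooth orbifold. Your phrasing treats them as cyclic singular points, which, if taken literally, would contradict the semi-regularity you need.
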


Theorem 26 in \cite{CMST} shows that in the quasi-regular case Theorem \ref{thm:main2} does not hold.

The above discussion motivates the first problem we want to address, which
certainly contributes to Question \ref{quest:bg-main}.

\begin{question} \label{quest:bg-main2}
Which simply connected rational homology spheres admit both negative and positive Sasakian structures?
\end{question}

We give a complete answer to Question \ref{quest:bg-main2} in Theorem \ref{thm:neg-pos-main}: all positive Sasakian 
simply connected rational homology spheres also  admit negative Sasakian structures.

Also,  Boyer and Galicki pose the following question.

\begin{question}[{\cite[Open Problem 10.3.6]{BG}}]\label{quest:connected-sum} 
Show that all $\#_k(S^2\times S^3)$ admit negative Sasakian structures 
or determine precisely for which $k$ this holds true.
\end{question}

In Theorem \ref{thm:all_k} we give a complete answer to this question: any $\#_k(S^2\times S^3)$ admits negative Sasakian structures.

\medskip

Let us characterize  a definite Sasakian structure in a different way. Recall that a Sasakian structure is positive (negative), if $c_1(\mathcal{F}_{\xi})$ can be represented by a positive (negative) definite $(1,1)$-form. 
 If $\pi: M\rightarrow X$ is a Seifert bundle determined by a quasi-regular Sasakian structure, then by \cite[Proposition 7.5.23]{BG},  $c_1(\mathcal{F}_{\xi})=\pi^*c_1^{\orb}(X)$. Since $c_1(\mathcal{F}_{\xi})$ is represented by a basic form (see \cite[Section 7.5.2]{BG}), it follows that $c_1(\mathcal{F}_{\xi})$ is represented by a  negative definite $(1,1)$-form if and only if $c_1^{\orb}(X)<0$ (that is, represented by a negative definite orbifold $(1,1)$-form). Thus, we get the following (see also \cite[Section 4]{BGM}).
 
\begin{proposition}\label{prop:c1neg} A $5$-manifold $M$ admits a quasi-regular negative Sasakian structure if and only if the base $X$ of the corresponding Seifert bundle $M\rightarrow X$ has the property that the canonical class  $K_X^{\orb}$ is ample.
\end{proposition}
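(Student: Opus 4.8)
The plan is to build on the reduction already carried out in the paragraph preceding the statement. There it was shown, using $c_1(\mathcal{F}_{\xi})=\pi^*c_1^{\orb}(X)$ (Proposition 7.5.23 of \cite{BG}) together with the fact that $c_1(\mathcal{F}_{\xi})$ is represented by a basic form, that the Sasakian structure is negative precisely when $c_1^{\orb}(X)$ is represented by a negative definite orbifold $(1,1)$-form. Since the correspondence of Theorem \ref{thm:sas-orb} identifies the base $X$ of the Seifert bundle with the leaf space of $\mathcal{F}_{\xi}$, and since $\pi^*$ is injective on the relevant basic cohomology, negative-definiteness of $c_1(\mathcal{F}_{\xi})$ is equivalent to a statement about $X$ alone. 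Thus it remains only to prove: $c_1^{\orb}(X)$ admits a negative definite representative if and only if $K_X^{\orb}$ is ample.

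First I would invoke the orbifold analogue of the identity $c_1=-K_X$ for complex (V-)manifolds, namely $c_1^{\orb}(X)=-K_X^{\orb}$ as classes in $H^2(X,\QQ)$. This immediately converts the condition that $c_1^{\orb}(X)$ be negative definite into the condition that $K_X^{\orb}$ be represented by a positive definite orbifold $(1,1)$-form.

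The crux of the argument, and the step requiring the most care, is the passage from \emph{positive definite representative} to \emph{ample}. Because $X$ carries cyclic quotient singularities, $K_X^{\orb}$ is a priori only a $\QQ$-line bundle (a V-bundle), so the classical Kodaira embedding theorem does not apply verbatim. Instead I would appeal to Baily's orbifold (V-manifold) version of the Kodaira embedding theorem, as developed in the K\"ahler-orbifold setting of \cite{BG}: on a compact cyclic K\"ahler orbifold a $\QQ$-line bundle is ample (some positive integral multiple is very ample and yields a projective orbifold embedding) if and only if it admits a positive definite $(1,1)$-form representing its orbifold first Chern class. Applying this to $K_X^{\orb}$ gives that $K_X^{\orb}$ is positive definite if and only if $K_X^{\orb}$ is ample. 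The main obstacle here is precisely the presence of the quotient singularities: one must verify that they are mild enough (being finite cyclic quotients, the V-manifold hypotheses of Baily's theory are satisfied) for the orbifold embedding theorem to apply, so that positivity genuinely upgrades to projectivity of a multiple of $K_X^{\orb}$.

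Chaining the equivalences then completes the proof in both directions: $M$ admits a quasi-regular negative Sasakian structure if and only if $c_1(\mathcal{F}_{\xi})$ has a negative definite representative, if and only if $c_1^{\orb}(X)$ does, if and only if $-K_X^{\orb}$ is negative definite, if and only if $K_X^{\orb}$ is positive definite, if and only if $K_X^{\orb}$ is ample. The forward direction uses Theorem \ref{thm:sas-orb} to produce $X$ from the given structure; the reverse direction uses the converse half of Theorem \ref{thm:sas-orb} to produce the Sasakian structure from $X$, its negativity being forced by the relation $c_1(\mathcal{F}_{\xi})=\pi^*c_1^{\orb}(X)$.
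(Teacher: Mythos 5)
Your argument is correct and follows essentially the same route as the paper, which deduces the statement directly from the identity $c_1(\mathcal{F}_{\xi})=\pi^*c_1^{\orb}(X)$ and the resulting equivalence with $c_1^{\orb}(X)<0$. The only difference is that you make explicit the step from a positive definite representative of $K_X^{\orb}$ to ampleness via Baily's orbifold Kodaira embedding theorem, a point the paper leaves implicit by citing \cite[Section 4]{BGM}.
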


In the case of rational homology spheres Sasakian structures fall into two classes.  


\begin{proposition}[{\cite[Proposition 7.5.29]{BG}}]\label{prop:can-sphere} Let $M$ be a $(2n+1)$-dimensional rational homology sphere. Any Sasakian structure on $M$  is either positive or negative.
\end{proposition}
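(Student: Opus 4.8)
The plan is to exploit the relation between ordinary and basic Betti numbers (Theorem \ref{rel:betti-basic-numbers}) to pin down $H^2_B(M)$, and then to read off the type of the structure from the single generator of this group.

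First I would record that, $M$ being a rational homology sphere of dimension $2n+1$, one has $b_r(M)=0$ for $0<r<2n+1$; in particular $b_r(M)=0$ for $0<r\le n$. Feeding this into Theorem \ref{rel:betti-basic-numbers} gives $b^B_r(M)=b^B_{r-2}(M)$ for $1\le r\le n$, and since $b^B_0(M)=1$ this yields $b^B_r(M)=1$ for $r$ even and $b^B_r(M)=0$ for $r$ odd, in the range $0\le r\le n$. The key consequence is $\dim H^2_B(M)=1$. Since $(d\eta)^n\wedge\eta$ is a volume form, $[d\eta]_B^n\ne 0$, so the transverse K\"ahler class $[d\eta]_B$ is nonzero and hence spans $H^2_B(M)$.

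Next, recall that $c_1(\mathcal{F}_\xi)$ is represented by the transverse Ricci form, a real basic $(1,1)$-form, so $c_1(\mathcal{F}_\xi)\in H^2_B(M)=\RR\,[d\eta]_B$, i.e.\ $c_1(\mathcal{F}_\xi)=\lambda\,[d\eta]_B$ for some $\lambda\in\RR$. Because $[d\eta]_B$ is the class of a positive definite (transverse K\"ahler) $(1,1)$-form, $\lambda\,[d\eta]_B$ is represented by a positive definite form when $\lambda>0$, by a negative definite form when $\lambda<0$, and vanishes when $\lambda=0$. This already rules out the indefinite case: every class in the one-dimensional space $H^2_B(M)$ is a real multiple of a definite form, and so can never be indefinite.

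The hard part is to exclude the null case $\lambda=0$. Here I would invoke transverse Hodge theory for the K\"ahler foliation $\mathcal{F}_\xi$: the basic cohomology carries a Hodge decomposition $H^k_B(M;\CC)=\bigoplus_{p+q=k}H^{p,q}_B(M)$ and satisfies the transverse hard Lefschetz property. If $c_1(\mathcal{F}_\xi)=0$, the transverse Calabi--Yau theorem produces a transverse Ricci-flat K\"ahler metric and hence a nowhere-vanishing basic holomorphic $(n,0)$-form, so $H^{n,0}_B(M)\ne 0$ and thus $b^B_n(M)\ge 1$. When $n$ is odd this already contradicts $b^B_n(M)=0$. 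When $n$ is even, transverse hard Lefschetz gives $[d\eta]_B^{n/2}\ne 0$ in $H^{n/2,n/2}_B(M)$, so $b^B_n(M)\ge \dim H^{n,0}_B(M)+\dim H^{n/2,n/2}_B(M)\ge 2$, contradicting $b^B_n(M)=1$. Either way the null case is impossible, and the structure must be positive or negative. The main obstacle is thus the null exclusion, which genuinely requires the transverse K\"ahler Hodge package rather than the elementary Betti-number bookkeeping used for the indefinite case.
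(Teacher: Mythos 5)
The paper itself offers no proof of this proposition: it is quoted directly from Boyer--Galicki, so there is no in-paper argument to measure yours against. Judged on its own, the first half of your argument is correct and is the standard one. The vanishing $b_r(M)=0$ for $0<r\le n$ combined with Theorem \ref{rel:betti-basic-numbers} gives $b_2^B(M)=1$; the class $[d\eta]_B$ is nonzero because $[d\eta]_B^{\,n}\neq 0$ (a top-degree basic form cannot be $d_B$-exact when $\eta\wedge(d\eta)^n$ is a volume form, by Stokes); hence $H^2_B(M)=\RR\,[d\eta]_B$ and $c_1(\mathcal{F}_\xi)=\lambda[d\eta]_B$, which immediately rules out the indefinite case.

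The genuine gap is in your exclusion of the null case, which is precisely the hard part of the statement. From $c_1(\mathcal{F}_\xi)=0$ in $H^2_B(M;\RR)$, El Kacimi's transverse Calabi--Yau theorem does give a transverse Ricci-flat K\"ahler metric, but the step ``hence a nowhere-vanishing basic holomorphic $(n,0)$-form'' does not follow: Ricci-flatness only makes the transverse canonical bundle flat (at best torsion), not holomorphically trivial, so $H^{n,0}_B(M)$ may vanish. The Enriques surface is the model for this failure: $c_1=0$ in real cohomology and Ricci-flat K\"ahler metrics exist, yet $h^{2,0}=0$; the same phenomenon occurs transversally for Seifert bundles over log Calabi--Yau orbifolds whose orbifold canonical class is a nontrivial torsion class (for instance $\CP^2$ with branch divisor $\tfrac34 Q$, $Q$ a smooth quartic, where $K^{\orb}$ is $4$-torsion with no holomorphic section). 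Consequently your inequality $b_n^B(M)\ge \dim H^{n,0}_B(M)+\dim H^{n/2,n/2}_B(M)$ cannot be bounded below by $2$ (nor by $1$ when $n$ is odd), and the contradiction you want evaporates. To close the null case you need an input beyond the transverse Calabi--Yau theorem --- this is exactly why the survey cites \cite[Proposition 7.5.29]{BG} rather than reproving it --- so as written your proof establishes only the easier ``not indefinite'' half of the proposition.
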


\begin{theorem}\label{thm:neg-pos-main} 
Any simply connected rational homology sphere admitting a positive Sasakian structure  admits also a negative Sasakian structure.
\end{theorem}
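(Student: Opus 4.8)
The plan is to combine the classification of positive Sasakian rational homology spheres with the orbifold characterization of negativity, and then to realize each admissible topological type by a Seifert bundle over a base of general type. By Proposition \ref{prop:quasi-regular} I may assume the given positive structure is quasi-regular, so by Theorem \ref{thm:sas-orb} it arises from a Seifert bundle $M\to X$ over a cyclic K\"ahler orbifold. Theorem \ref{thm:pos-sas} then forces $M$ to be spin with $H_2(M,\ZZ)$ in the explicit finite list $0,\ZZ_m^2,\ZZ_5^4,\ZZ_4^4,\ZZ_3^4,\ZZ_3^6,\ZZ_3^8,\ZZ_2^{2n}$. Since $M$ is a simply connected spin $5$-manifold, its Barden invariant $i(M)$ vanishes, so by the Smale-Barden classification $M$ is determined up to diffeomorphism by $H_2(M,\ZZ)$ alone. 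Hence it suffices, for each torsion group $T$ on the list, to exhibit one simply connected rational homology sphere with $H_2=T$ carrying a negative Sasakian structure.

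By Proposition \ref{prop:c1neg}, producing a negative quasi-regular Sasakian structure is equivalent to building a cyclic K\"ahler orbifold $X'$ whose orbifold canonical class $K_{X'}^{\orb}$ is ample, together with a Seifert bundle $M'\to X'$ of the correct topology. For most of the groups on the list this is already available: Gomez's Theorem \ref{thm:g} supplies negative structures for every group except possibly $\ZZ_m^2$ with $m<5$ and $\ZZ_2^{2n}$. So the remaining work is to construct negative Sasakian rational homology spheres realizing these torsion types, i.e.\ $\ZZ_m^2$ for $m\in\{2,3,4\}$ and $\ZZ_2^{2n}$ for all $n$.

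For the construction I would use the machinery of Section \ref{sec:seifert}, based on the blow-down procedure of Propositions \ref{prop:Hirz-Jung} and \ref{prop:constr-orbi}. By the formula $H_2(M',\ZZ)=\ZZ^k\oplus\bigoplus_i\ZZ_{m_i}^{2g_i}$ with $k+1=b_2(X')$ from Theorem \ref{thm:H1(M)}, an all-torsion $H_2$ forces $b_2(X')=1$, so the base must have Picard number one. I would obtain such a base by starting from a smooth projective surface $Y$ carrying chains of smooth rational curves of self-intersection $\leq -2$ together with a branch curve $D$ of the prescribed genus $g$ (with $g=1$ for $\ZZ_m^2$ and $g=n$ for $\ZZ_2^{2n}$), and contracting the chains as in Proposition \ref{prop:constr-orbi} to a singular orbifold $X'$ with $b_2(X')=1$ and cyclic singular points at the contracted loci. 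The multiplicity $m$ is attached to $D$, and the branch divisor $\Delta=(1-\tfrac1m)D$ is chosen so that $K_{X'}^{\orb}=K_{X'}+\Delta$ is ample. The Seifert bundle is then specified by compatible local invariants (Proposition \ref{prop:25-M}) and first orbifold Chern class (Proposition \ref{prop:c1}) equal to the K\"ahler class, so that $M'$ is Sasakian by Proposition \ref{prop:c1} and negative by Proposition \ref{prop:c1neg}. Finally I would verify $H_1(M',\ZZ)=0$ through the three conditions of Theorem \ref{thm:H1(M)} and deduce $\pi_1(M')=1$ from Proposition \ref{prop:nori} together with the exact sequence of Subsection \ref{subsec:fund}.

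The main obstacle is to reconcile ampleness of $K_{X'}^{\orb}$ with the requirement $b_2(X')=1$ and a low-genus branch curve. On $\CP^2$ alone, ampleness of $K^{\orb}=-3H+(1-\tfrac1m)\deg(D)\,H$ forces $\deg D$, hence the genus $g=\tfrac12(\deg D-1)(\deg D-2)$, to be large, so the small groups $\ZZ_m^2$ and $\ZZ_2^{2n}$ cannot come from a single curve on $\CP^2$. Circumventing this is exactly where the blow-down construction is needed: one must build a Picard-number-one orbifold with ample orbifold canonical class out of a surface of larger Picard number, distributing the torsion between the branch curve and the cyclic singular points while keeping the total space simply connected. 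Verifying that the arithmetic constraints of Theorem \ref{thm:H1(M)} --- surjectivity of $H^2(X',\ZZ)\to\bigoplus H^2(D_i,\ZZ_{m_i})$ and primitivity of $c_1(M'/\mu)$ --- admit a consistent solution alongside the ampleness inequality, for each remaining torsion type, is the heart of the argument.
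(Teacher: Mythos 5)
Your overall strategy matches the paper's: reduce via Theorem \ref{thm:g} to the two remaining torsion types $\ZZ_m^2$ ($m<5$) and $\ZZ_2^{2n}$, then build for each a cyclic K\"ahler orbifold $X'$ with $b_2(X')=1$ and $K_{X'}^{\orb}$ ample, and check Theorem \ref{thm:H1(M)} and Proposition \ref{prop:nori}. You also correctly diagnose the tension: on a Picard-number-one base, ampleness of $K_{X'}+(1-\tfrac1m)D$ with a \emph{single} branch curve $D$ forces the class of $D$, hence its genus, to be too large for the small torsion groups. But you leave this, by your own admission, as ``the heart of the argument,'' and that is precisely the gap: blowing down chains to reach $b_2=1$ does not by itself resolve it. For instance, the paper's base for $\ZZ_2^{2n}$ is $\mathbb{F}_n$ with $E_\infty$ contracted, where $K_X=-(n+2)C$ and $D\equiv(1+\beta n)C$ with $g(D)=\tfrac{\beta^2-\beta}{2}n$; taking $m=\beta=2$ gives the right homology ($g(D)=n$) but the coefficient of $K_X+(1-\tfrac12)D$ is $-\tfrac32<0$, and raising $\beta$ to force ampleness inflates the genus to $3n$ or more. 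So the single-branch-curve construction you describe cannot close.

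The missing idea is to enlarge the branch divisor by additional \emph{rational} isotropy curves $D_1,\dots,D_s\equiv E_0$ with large, pairwise coprime multiplicities $m_i$ (also coprime to $m$ and $n$). Because $g_i=0$, Theorem \ref{thm:H1(M)} shows these contribute $\ZZ_{m_i}^{2g_i}=0$ to $H_2(M')$, so the torsion remains $\ZZ_2^{2n}$; yet each adds $(1-\tfrac1{m_i})n\,C$ to $K_{X'}^{\orb}$, so taking $s$ and the $m_i$ large makes the coefficient $-\tfrac32+n\sum_i(1-\tfrac1{m_i})$ positive. This decouples the ampleness inequality from the homology constraint, which is exactly what your proposal needs and does not supply. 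The remaining arithmetic you flag (surjectivity of $H^2(X',\ZZ)\to\bigoplus H^2(D_i,\ZZ_{m_i})$ and primitivity of $c_1(M'/\mu)$) is then handled by a B\'ezout argument choosing $b,b_i,q$ so that $c_1(M'/\mu)=[C]$, and simple connectivity follows from Nori's theorem applied to $\mathbb{F}_n$ minus the curve configuration, as you anticipate.
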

In order to illustrate the methods we reproduce a part of the  proof from \cite{MST1} without any changes.
By Theorem \ref{thm:g}, it only remains to check that the homology groups
$H_2(M,{\mathbb Z})= {\mathbb Z}_2^{2n}$, $n>0$, and ${\mathbb Z}_m^2$, $m< 5$, 
from Theorem \ref{thm:pos-sas} can be covered. Each of these cases is worked out separately in \cite{MST1}. Here we show the proof only for $\mathbb{Z}^{2n}_2$ (the other case is similar in spirit).

We begin with some known facts on Hirzebruch surfaces \cite{BPV}, \cite{GS}, \cite{H}. 
Fix $n\geq 0$. By definition, the Hirzebruch surface $\mathbb{F}_n$  is the projectivization of the vector bundle
 $$
 \mathcal{O}_{\CP^1}(n)\oplus\mathcal{O}_{\CP^1}.
 $$ 
For a holomorphic section
 $\sigma: \CP^1\rightarrow\mathcal{O}_{\CP^1}(n)$, we
denote by $E_\sigma$ the image of $(\sigma,1): \CP^1\rightarrow \mathcal{O}_{\CP^1}(n)\oplus\mathcal{O}_{\CP^1}$
in $\mathbb{F}_n$. 
The curve $E_0\subset\mathbb{F}_n$ is called the zero section of $\mathbb{F}_n$. As $E_0\equiv E_\sigma$ for all sections $\sigma$,
we have that $E_0^2=n$. Let $C$ denote the fiber of the fibration $\mathbb{F}_n\rightarrow \CP^1$. Then
 $$
 C^2=0,\, E_0\cdot C=1.
 $$
It is known \cite{T} that for $n>0$ the surface $\mathbb{F}_n$ contains a unique irreducible curve $E_{\infty}$ of negative self-intersection:
 \begin{equation*}
 E_{\infty}\cdot E_{\infty}=-n.
 \end{equation*}
This curve is called the section at infinity since it is  given by the image of 
$(\sigma,0): \CP^1\rightarrow \mathcal{O}_{\CP^1}(n)\oplus\mathcal{O}_{\CP^1}$. One can calculate
 $$
  E_{\infty} \equiv E_0-nC, \quad K_{\mathbb{F}_n} \equiv (n-2)C-2E_0,
  $$
where $K_{\mathbb{F}_n}$ is the canonical divisor. Clearly
 $$
 H^2(\mathbb{F}_n,\ZZ)=\ZZ\langle C,E_{\infty}\rangle.
 $$
 
Now we are ready to prove our first main result.

\begin{proposition}\label{prop:1-neg-pos-main} 
The simply connected rational homology sphere with $H_2(M,\ZZ)=\ZZ_2^{2n}$, $n\geq 1$, and spin
admits a negative Sasakian structure.
\end{proposition}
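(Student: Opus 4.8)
The plan is to realize $M$ as the total space of a Seifert bundle over a cyclic K\"ahler orbifold $X$ with $b_2(X)=1$ whose orbifold canonical class $K_X^{\orb}$ is ample; by Proposition \ref{prop:c1neg}, together with Theorem \ref{thm:sas-orb} and Proposition \ref{prop:c1}, such a bundle carries a negative quasi-regular Sasakian structure once $c_1(M)$ is taken in the ray of the ample class $K_X^{\orb}$ (which is then represented by a K\"ahler form $\omega$, and $c_1(\mathcal{F}_\xi)=\pi^*c_1^{\orb}(X)=-\pi^*K_X^{\orb}<0$). Since a simply connected $5$-manifold is determined by $H_2$ and the Barden invariant, it suffices to produce one such $M$ that is spin, simply connected, and has $H_2(M,\ZZ)=\ZZ_2^{2n}$.

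For the base I would start from the Hirzebruch surface $\mathbb{F}_{n+1}$ and contract its negative section $E_\infty$ (the $(-(n+1))$-curve). By Propositions \ref{prop:Hirz-Jung} and \ref{prop:constr-orbi} this produces a cyclic singular point $p$ of type $\tfrac{1}{n+1}(1,1)$, and the resulting surface $X$ has $b_2(X)=1$ and $H_1(X)=0$. As the isotropy surface carrying the torsion I would take a smooth irreducible member $D$ of the linear system $|2E_0|$: by adjunction $D$ has genus exactly $n$, and since $D\cdot E_\infty=2E_0\cdot E_\infty=0$ it is disjoint from $E_\infty$, so $p\notin\bar D$. Assigning $D$ multiplicity $m_D=2$ yields the summand $\ZZ_2^{2n}$ in Theorem \ref{thm:H1(M)}. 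However, $K_X+\tfrac12\bar D$ pulls back on $\mathbb{F}_{n+1}$ to $\tfrac{-2}{n+1}E_0$, a negative multiple of the nef generator $E_0$, so this data alone gives a positive rather than a negative structure. To force $K_X^{\orb}$ to be ample I would add a few auxiliary isotropy sections $E_{\sigma_j}\equiv E_0$ (rational, hence contributing no torsion) with pairwise coprime odd multiplicities $q_j$, chosen so large that $-\tfrac{2}{n+1}+\sum_j\bigl(1-\tfrac1{q_j}\bigr)>0$. All intersections are transverse with coprime multiplicities, so Theorem \ref{thm:munoz} applies and $X$ is a genuine cyclic K\"ahler orbifold with $K_X^{\orb}$ ample.

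With $X$ fixed I would assign compatible local invariants by Proposition \ref{prop:25-M} (the cone point $p$ lies on no isotropy surface, which is permitted by the ``if any'' clause) and determine the Seifert bundle by Proposition \ref{prop:c1}, choosing the background line bundle $B$ and the integers $b_i$ so that $c_1(M)$ lands in the ample class $K_X^{\orb}$. The homological conclusions then follow from Theorem \ref{thm:H1(M)}: condition (1) holds since $X$ is rational, and the ``moreover'' clause gives $H_2(M,\ZZ)=\ZZ_2^{2n}$ (here $k=b_2(X)-1=0$, and the rational sections contribute $\ZZ_{q_j}^{0}=0$) once $H_1(M,\ZZ)=0$ is known. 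For $\pi_1(M)=1$ I would apply Proposition \ref{prop:nori} to the smooth model $\mathbb{F}_{n+1}$ and the branch curves of positive self-intersection to conclude that $\pi_1^{\orb}(X)$ is abelian, so that $H_1(M,\ZZ)=0$ forces $\pi_1(M)=1$ by the argument of Subsection \ref{subsec:fund}.

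The heart of the proof, and the step I expect to be hardest, is the verification of the two remaining clauses of Theorem \ref{thm:H1(M)}, namely surjectivity of $H^2(X,\ZZ)\to\bigoplus_i H^2(D_i,\ZZ_{m_i})$ and primitivity of $c_1(M/\mu)$ in $H^2(X-P,\ZZ)$, together with the spin condition $w_2(M)=0$ that pins down the Barden invariant and guarantees we hit the prescribed diffeomorphism type. These are number-theoretic constraints coupling the multiplicities $2,q_j$, the order $n+1$ of the cone point, and the chosen $b_i$; the essential tension is that the auxiliary sections introduced to make $K_X^{\orb}$ ample simultaneously enlarge $\bigoplus_i H^2(D_i,\ZZ_{m_i})$ and complicate the primitivity computation in $H^2(X-P,\ZZ)$. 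Balancing ampleness against primitivity and spin, uniformly in $n$ (with the smallest values of $n$ needing the most auxiliary data), is where the real work lies, and is carried out case by case in \cite{MST1}.
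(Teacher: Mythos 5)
Your overall strategy (a Seifert bundle over a blown-down Hirzebruch surface, one high-genus isotropy surface of multiplicity $2$ carrying the torsion, auxiliary rational sections forcing ampleness of $K_X^{\orb}$, Nori's theorem for $\pi_1$) is exactly the paper's, but your specific choice of branch divisor is fatally flawed. With $X$ obtained from $\mathbb{F}_{n+1}$ by contracting $E_\infty$, one has $H^2(X,\ZZ)=\ZZ\langle E_0\rangle$, and condition (2) of Theorem \ref{thm:H1(M)} for the surface $D$ of multiplicity $2$ asks that the restriction map $H^2(X,\ZZ)\to H^2(D,\ZZ_2)\cong\ZZ_2$, i.e.\ $[E_0]\mapsto E_0\cdot D \bmod 2$, be onto. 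For your $D\equiv 2E_0$ this intersection number is $2E_0^2=2(n+1)$, which is even, so the map is zero and condition (2) fails. Since Theorem \ref{thm:H1(M)} is an equivalence, $H_1(M,\ZZ)\neq 0$ for every Seifert bundle built on this orbifold data --- no choice of local invariants, of the $b_i$, or of the background bundle $B$ can repair this, because condition (2) depends only on the homology class of $D$ and its multiplicity. You correctly flagged condition (2) as one of the delicate points, but with your data it is not merely delicate: it is unsatisfiable.

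The repair is precisely what the paper does: work on $\mathbb{F}_n$ (not $\mathbb{F}_{n+1}$) and take $D\equiv C+2E_0$, so that the genus formula still gives $g(D)=\frac{\beta^2-\beta}{2}\,n=n$ for $\beta=2$, while $E_0\cdot D=1+2n$ is odd, making the map to $H^2(D,\ZZ_2)$ surjective. The price is that $D\cdot E_\infty=1$, so after contraction $\bar D$ passes through the singular point $p$ as a sing-complex curve (Proposition \ref{prop:constr-orbi}); this is harmless for Proposition \ref{prop:25-M}, since $p$ still lies on a single isotropy surface, and the canonical-class computation changes only by bookkeeping (one finds $K_X^{\orb}=\bigl(-\tfrac32+n\sum_i(1-\tfrac1{m_i})\bigr)C$, again made positive by auxiliary sections). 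With that correction the rest of your outline --- pairwise coprime odd multiplicities on the sections, also coprime to $n$ for surjectivity onto $H^2(D_i,\ZZ_{m_i})$, a B\'ezout argument for primitivity of $c_1(M/\mu)$, and abelianity of $\pi_1^{\orb}(X)$ via Proposition \ref{prop:nori} --- matches the paper's proof.
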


\begin{proof} 
Consider $\mathbb{F}_n$ with the zero section $E_0$, the section at infinity $E_{\infty}$, 
and the fiber $C$ of $\mathbb{F}_n\rightarrow \CP^1$. Let $\beta\geq 1$ and  take a smooth divisor 
 $$
 D\equiv C+\beta E_0.
 $$ 
This exists because $ C+\beta E_0$ is  very ample (it can be represented by a K\"ahler form) and Bertini's theorem is applicable.
 By the genus formula $D^2+K_{\mathbb{F}_n} \cdot D =2g(D)-2$, we get
 \begin{equation}\label{eqn:(2)}
 g(D)={\frac{\beta^2-\beta}2}\cdot n. 
 \end{equation}
One also calculate
 $$
 D\cdot E_{\infty}=(C+\beta E_0)\cdot E_{\infty}= 1,
 $$
as $E_0\cdot E_\infty=0$. Consider also rational curves $D_i=E_{\sigma_i}$, $i=1,\ldots, s$, which are
sections, $D_i\equiv E_0$. These can be taken to intersect transversally (each other and also to $D$).

Let $X$ be the orbifold obtained by the blow-down of $\mathbb{F}_n$ along $E_{\infty}$. 
By Proposition \ref{prop:constr-orbi}, 
$X$ is a cyclic orbifold with a cyclic singularity $p$ of degree $d(p)=n$. 
For simplicity we denote curves on $\mathbb F_n$ and on $X$ by the same letters.
Endow $X$ with an orbifold structure with the branch divisor
 \begin{equation}\label{eqn:(3)}
 \Delta= \left(1-\frac{1}{m}\right) D+ \sum_{i=1}^s \left(1-\frac{1}{m_i}\right) D_i \, ,
  \end{equation}
where $m_1,\ldots, m_s$ are pairwise coprime, and the $m_i$ are coprime to $m$ and $n$ (but possibly
$\gcd(n,m)>1$).

One can easily calculate that
 \begin{align}
 \label{eqn:H_2}
  & H_2(X,\ZZ) = H^2(X-\{p\},\ZZ) = \ZZ\langle C\rangle, \\
  \label{eqn:dual}
  & H_2(X-\{p\},\ZZ) = H^2(X,\ZZ) = \ZZ\langle E_0\rangle.
 \end{align}
Note that $C$ is a divisor passing through the singular point $p$.
Since the (co)homology groups in \eqref{eqn:H_2}, \eqref{eqn:dual} are dual, we can calculate the intersection numbers over $\mathbb{Q}$ and obtain
 $$
 C^2={\frac 1n}
 $$
in $H_2(X,\mathbb{Q})$. 
Indeed,
 we can write $E_0\equiv a\, C$, for some $a\in\ZZ$, and substituting this to $n=E_0^2=a\,E_0\cdot C=a$, we get 
$E_0 \equiv n\,C$. Then $C^2=\frac{1}{n^2} E_0^2=\frac1n$. 
We also have
 $$
 D \equiv C+\beta E_0\equiv C+\beta nC=(1+\beta n)C.
 $$

Considering $X$ as an algebraic variety, we calculate the canonical divisor $K_X$ from the adjunction formula (noting that $C$ is
sing-complex):
 $$
 K_X\cdot C+C^2=-\chi_{\orb}(C) = 2g(C)-2 + \left(1-\frac{1}{n}\right).
 $$
Since $g(C)=0$ we get $K_X\cdot C+{\frac 1n}=-2+(1-{\frac 1n})$.
Writing $K_X=b\, C$, $b\in\mathbb{Q}$, and substituting in the above, we get $b=-(n+2)$. So finally
  $$
  K_X=-(n+2)C.
  $$
Since all $D_i$ are homologous to $E_0$, we obtain the following formula for the orbifold canonical class:
\begin{equation}\label{eqn:(5)}
  \begin{aligned}
  K_X^{\orb}& =K_X+\left(1-{\frac 1m}\right)D+\sum_{i=1}^s\left(1-{\frac 1{m_i}}\right)D_i \\ 
  &= -(n+2)C+\left(1-{\frac 1m}\right)(1+\beta n)C+\sum_{i=1}^s\left(1-{\frac 1{m_i}}\right)nC \\
 &=\left(-(n+2)+\left(1-{\frac 1m}\right)(1+\beta n)+n\sum_{i=1}^s\left(1-{\frac1{m_i}}\right)\right)C. 
 \end{aligned}
 \end{equation}

Now consider a Seifert bundle 
 $$
  M\rightarrow X
  $$
determined by the orbifold data (\ref{eqn:(3)}). We need to choose local invariants $0<j_i<m_i$, $\gcd(j_i,m_i)=1$,
for each $D_i$, and $0<j<m$, 
$\gcd(j,m)=1$, for $D$. These will be fixed later. The local invariant $j_p$ at the singular point is given by
applying Proposition \ref{prop:25-M}.

By Proposition \ref{prop:c1neg}, we know that the existence of a negative 
Sasakian structure on $M$ is equivalent to the ampleness of the orbifold canonical bundle $K_X^{\orb}$, 
and this is equivalent to the positivity of the coefficient in (\ref{eqn:(5)}). Now we specify some coefficients
in order to realize the desired cohomology groups.
Put $m=2$ and $\beta=2$, so by (\ref{eqn:(2)}), we have $g(D)=n$. By Theorem \ref{thm:H1(M)}, this gives $H_2(M,\ZZ)=\ZZ_2^{2n}$.
Calculating the coefficient in (\ref{eqn:(5)}), 
we get
  $$
  -{\frac 32}+n\sum_{i=1}^s \left(1-{\frac 1{m_i}}\right).
  $$
Choosing $m_i$ and $s$ large, we can get positivity for any $n$. As we said, we take $m, m_i$ all of them pairwise coprime, and
also $\gcd(m_i,n)=1$ for all $i$.

Note that we have done this calculation assuming that the assumptions of Theorem \ref{thm:H1(M)} are satisfied. 
The assumption (2) holds for $D_i$ since $H^2(X,\ZZ)=\ZZ\la E_0\ra$, $D_i\equiv E_0$, and $\gcd(m_i,n)=1$.
It holds for $D$ since $D\equiv (1+n\beta)C$, so the map $H^2(X,\ZZ)\to H^2(D,\ZZ_m)$ is $[E_0]\mapsto 1+n\beta$, and
$\gcd(1+n\beta,m)=1$ for $\beta=m=2$.

Let us now check assumption (3) of Theorem  \ref{thm:H1(M)}.
By Proposition \ref{prop:c1},
  $$
  c_1(M)=c_1({B})+{\frac bm}[C+\beta E_0]+\sum_i\frac{b_i}{m_i}[E_0],
  $$
for a line bundle $B$, so $c_1(B)=q[E_0]$, for some integer $q\in \ZZ$.
As we take all $m,m_1,\ldots, m_s$ pairwise coprime, we have 
$\mu=m\cdot m_1\cdots m_s$. 
We need to check that we can arrange for $\mu\,c_1(M)$ to be integral and primitive in $H^2(X-P,\ZZ)$, 
which means that we need
 $$
  c_1(M/\mu)=\mu\,c_1(M)=[C].
 $$
Note also that as $c_1(M)>0$, we have by Proposition \ref{prop:c1} that $M$ is Sasakian.
We compute
 $$ 
 \mu\, c_1(M)=b\cdot m_1\cdots m_s[C+\beta E_0]+\sum_ib_i\cdot m\cdot m_1\cdots\hat{m_i}\cdots m_s[E_0]+\mu\,c_1({B}).
 $$
In $H^2(X,\ZZ)$ we have $[E_0]=n[C]$, so $c_1(M/\mu)=k[C]$ with 
 \begin{equation}\label{eqn:k=1}
 k=(1+{n\beta})bm_1\cdots m_s+n\left(\sum_i b_i mm_1\cdots\hat{m_i}\cdots m_s\right)+ \mu q.
   \end{equation}
Since we already fixed $m=\beta=2$,
we need to arrange $b,b_i,m_i,q$ to get $k=1$. First note that
 \begin{equation}\label{eqn:gcd}
  \gcd((1+{n\beta})m_1\cdots m_s, n mm_1\cdots\hat{m_i}\cdots m_s) =1,
   \end{equation}
by the choice of $m,m_i$. Then there are $\bar b,\bar b_i\in\ZZ$ such that
 $$
 1=(1+{n\beta})\bar b m_1\cdots m_s+n\left(\sum_i \bar b_i mm_1\cdots\hat{m_i}\cdots m_s\right).
 $$
Take $\bar b=b +m q_0$, $\bar b_i=b_i+m_i q_i$, where $0\leq b<m$ and $0\leq b_i<m_i$. Then
 $$
 1=(1+{n\beta}) b m_1\cdots m_s+n\left(\sum_i b_i mm_1\cdots\hat{m_i}\cdots m_s\right) + \left((1+n\beta)q_0+\sum n q_i\right)\mu,
 $$
as required. Finally note that $\gcd(b,m)=1$ and $\gcd(b_i,m_i)=1$, so that we can take local invariants 
$j,j_i$ with $bj\equiv 1 \pmod{m}$ and
$b_ij_i\equiv 1\pmod{m_i}$ and apply Proposition \ref{prop:25-M}.

It remains to show that $\pi_1(M)=1$.  By Proposition \ref{prop:nori}  applied to $\pi_1({\mathbb F}_n-S)$,  $S=D\cup D_1\cup \ldots \cup D_s$, we get that this group is abelian.
Now take the curve $E_\infty$, that intersects transversally $S$ at one point. Take a loop $\delta$ around $E_\infty$, then
$\pi_1({\mathbb F}_n-(S\cup E_\infty))$ is generated by $\delta$ and $\pi_1({\mathbb F}_n-S)$.
Let $\gamma,\gamma_1,\ldots, \gamma_s$ be loops around each of the divisors $D,D_1,\ldots D_s$. 
Take a general fiber $C$ of
${\mathbb F}_n \to \CP^1$, 
and intersecting with $S\cup E_\infty$, we get a homotopy 
$\delta=\gamma^\beta \prod_{i=1}^s\gamma_i$,
since $\beta =C \cdot D$ and $1=C\cdot D_i$, $i=1,\ldots, s$. 
Note also that $\delta,\gamma$ commute since the curves $E_\infty,D$ intersect transversally at one point. 
Also $\gamma_i,\gamma_j$ commute since $D_i,D_j$ intersect transversally (just take the link of the complement of
the two curves around an intersection point, which is a $2$-torus containing both loops; hence they commute in this $T^2$).
Analogously $\gamma_i,\gamma$ commute. Finally, $\delta=\gamma^\beta \prod_{j=1}^s\gamma_j$, so it also
commutes with $\gamma_i$ in $\pi_1({\mathbb F}_n-(S\cup E_\infty))$.

Now recall that $\pi_1^{\orb}(X)$ is the quotient of $\pi_1(X-S)=\pi_1({\mathbb F}_n-(S\cup E_\infty))$ with
the relations $\gamma^m=1$, $\gamma_i^{m_i}=1$. Then we have that
$\pi_1^{\orb}(X)$ is also abelian. It follows that $\pi_1(M)$ is abelian. Since $H_1(M,{\mathbb Z})=0$ we get necessarily $\pi_1(M)=1$. 
By Theorem \ref{thm:H1(M)},  $M$ is a rational homology sphere with $H_2(M,\ZZ)=\ZZ_2^{2n}$

We omit the  case ${\mathbb Z}_m^2$, $m< 5$ refering to \cite{MST1}.
\end{proof}

\begin{remark} In  \cite{PW} a complete classification of Sasaki-Einstein rational homology spheres was obtained. This result is a positive type counterpart to the present work which deals with the negative case. One can find more results on the positive case in \cite{BN}.
\end{remark}

\subsubsection{Quasi-regular negative Sasakian structures on $\#_k(S^2\times S^3)$}\label{sect:quasi-regular-torsionfree}

In this subsection we give a complete answer to  Question \ref{quest:connected-sum}:

\begin{theorem} \label{thm:all_k}
Any $\#_k(S^2\times S^3)$
admits a quasi-regular negative Sasakian structure.
\end{theorem}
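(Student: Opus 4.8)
The plan is to realize every $\#_k(S^2\x S^3)$ as the total space of a Seifert bundle $\pi\colon M\to X$ over a smooth cyclic Kähler orbifold (i.e.\ with empty singular set $P=\emptyset$, the orbifold structure coming only from a branch divisor) whose orbifold canonical class $K_X^{\orb}$ is ample, so that negativity is immediate from Proposition~\ref{prop:c1neg}. The guiding observation is the cohomology formula of Theorem~\ref{thm:H1(M)}: since $H_2(M,\ZZ)=\ZZ^k\oplus(\oplus_i\ZZ_{m_i}^{2g_i})$ with $k+1=b_2(X)$ and $g_i=\text{genus}(D_i)$, a \emph{torsion-free} group $H_2(M,\ZZ)=\ZZ^k$ is forced precisely when every branch curve $D_i$ is rational ($g_i=0$). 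Thus the whole problem reduces to producing, for each $k\ge 0$, a smooth simply connected projective (rational) surface $X$ with $b_2(X)=k+1$, carrying a branch divisor $\sum_i(1-\tfrac1{m_i})D_i$ whose components $D_i$ are smooth rational curves of positive self-intersection meeting transversally, with pairwise coprime multiplicities, such that $K_X^{\orb}$ is ample.

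For the base I would take a blow-up of $\CP^1\x\CP^1$ (or of $\mathbb{F}_n$, as in Proposition~\ref{prop:1-neg-pos-main}, but with $\beta=1$ so that $g(D)=0$ by \eqref{eqn:(2)}) at $k-1$ generic points, giving $b_2(X)=k+1$; for $k=0$ one simply takes $X=\CP^2$. On such an $X$ there are ample genus-$0$ curves of positive self-intersection (e.g.\ bidegree-$(1,b)$ curves on $\CP^1\x\CP^1$, or lines on $\CP^2$), and taking several of these with large, pairwise coprime multiplicities $m_i$, the positive contribution $\sum_i(1-\tfrac1{m_i})D_i$ dominates $K_X$, so $K_X^{\orb}=K_X+\sum_i(1-\tfrac1{m_i})D_i$ is ample. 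By Proposition~\ref{prop:c1neg} the resulting Seifert bundle carries a negative structure, and by Proposition~\ref{prop:c1} it is Sasakian once $c_1(M)=[\omega]$.

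It then remains to enforce $H_1(M,\ZZ)=0$, $\pi_1(M)=1$, and spinness. I would verify the three conditions of Theorem~\ref{thm:H1(M)}: condition (1) holds since $X$ is a blown-up rational surface with $H_1(X,\ZZ)=0$; condition (2), surjectivity of $H^2(X,\ZZ)\to\oplus_i H^2(D_i,\ZZ_{m_i})$, is arranged by choosing the classes $[D_i]$ and the $m_i$ coprime, exactly as in the $\ZZ_2^{2n}$ computation; and condition (3), primitivity of $c_1(M/\mu)$, follows from the same Bézout argument as in Proposition~\ref{prop:1-neg-pos-main}, solving for integers $b,b_i,q$ so that $c_1(M/\mu)$ is a generator, using coprimality of the $m_i$. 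Simple connectivity then comes from Proposition~\ref{prop:nori}: the $D_i$ have positive self-intersection on a simply connected projective surface, so $\pi_1$ of the complement is abelian, hence $\pi_1^{\orb}(X)$ is abelian, hence so is $\pi_1(M)$; combined with $H_1(M,\ZZ)=0$ this yields $\pi_1(M)=1$. Finally, since $H_2(M,\ZZ)=\ZZ^k$ is torsion-free and $M$ is simply connected, the Smale--Barden classification identifies $M$ with either $\#_k(S^2\x S^3)$ or its non-spin twin $(S^2\tilde{\times}S^3)\#\#_{k-1}(S^2\x S^3)$, the two being distinguished by $w_2(M)$; I would pin down the spin case by adjusting the parity of the defining Chern data, using the freedom in the line bundle $B$ of Proposition~\ref{prop:c1}.

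The hard part, requiring the most care, is making ampleness of $K_X^{\orb}$, primitivity of $c_1(M/\mu)$, applicability of Proposition~\ref{prop:nori}, and the vanishing $w_2(M)=0$ hold \emph{simultaneously and uniformly in $k$}: ampleness pushes toward many high-multiplicity branch curves, primitivity is a delicate coprimality condition, and Nori's theorem demands branch curves of positive self-intersection meeting transversally. Controlling the blow-ups so that the exceptional configuration neither destroys the positivity of the relevant self-intersections nor the simple connectivity of the complement, while keeping the arithmetic of the multiplicities compatible with primitivity and with $w_2=0$, is where the genuine work lies.
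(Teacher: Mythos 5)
Your overall strategy coincides with the one the paper indicates for Theorem \ref{thm:all_k}: the survey gives no details beyond saying that the method is that of Proposition \ref{prop:1-neg-pos-main} --- choose a rational base $X$ with $b_2(X)=k+1$, a pattern of rational branch curves (so that Theorem \ref{thm:H1(M)} forces $H_2(M,\ZZ)=\ZZ^k$), verify ampleness of $K_X^{\orb}$ via Proposition \ref{prop:c1neg}, primitivity of $c_1(M/\mu)$ via the B\'ezout argument, and $\pi_1(M)=1$ via Proposition \ref{prop:nori} --- and then refers to \cite{MST1}. Your reduction ``torsion-free $H_2(M,\ZZ)$ iff all branch curves are rational'' and the choice of blown-up rational surfaces are exactly in this spirit.

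Two concrete points in your sketch need repair rather than deferral. First, ampleness: on the blow-up of $\CP^1\times\CP^1$ at $k-1$ points each exceptional curve $E_j$ satisfies $K_X\cdot E_j=-1$, so if your branch curves are chosen (as ``generic points'' plus ``curves of positive self-intersection'' suggests) to miss the blown-up points, then $K_X^{\orb}\cdot E_j=K_X\cdot E_j=-1<0$ and $K_X^{\orb}$ is \emph{never} ample, no matter how large the $m_i$ are. The construction only works if the branch divisor is arranged so that $\Delta\cdot E_j>1$ for every $j$, e.g.\ at least two branch curves through each blown-up point (their proper transforms then lose one unit of self-intersection, which must be kept positive for Proposition \ref{prop:nori}); this is not a side issue but the actual content of ``finding the pattern of curves.'' Second, the spin condition: identifying $M$ with $\#_k(S^2\times S^3)$ rather than its non-spin twin requires an explicit criterion for $w_2(M)$ of a Seifert bundle (Koll\'ar's characterization of spin Seifert bundles in terms of $K_X^{\orb}+\sum\frac{m_i-1}{m_i}D_i$ and the choice of $c_1$), not merely ``adjusting the parity of $B$''; one must check that this parity adjustment is compatible with the simultaneous requirement that $\mu\,c_1(M)$ be primitive and that $c_1(M)$ be a K\"ahler class as in Proposition \ref{prop:c1}. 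With these two points made precise, your argument matches the intended proof.
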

The methods of proof of this theorem are close to the proof of Proposition \ref{prop:1-neg-pos-main}, they are based on finding suitable $X$, the pattern of curves and checking the amplness of $K_X^{\text{orb}}$ (see \cite{MST1}).

\subsection{Realization of homological data by Sasakian manifolds in dimension 5}

  It is an important open problem to describe the class of Smale-Barden manifolds which admit Sasakian structures (see \cite[Chapter 10]{BG}). 
Some necessary conditions for a Smale-Barden manifold to carry a Sasakian structure are known.
 
\begin{definition} 
Let $M$ be a Smale-Barden manifold. We say that $M$ satisfies the {\it condition G-K} if the pair 
$(H_2(M,\ZZ),i(M))$ written in the form
 $$
 H_2(M,\ZZ)=\ZZ^k\oplus (\bigoplus_{p,i}\ZZ_{p^i}^{c(p^i)}),
 $$
where $k=b_2(M)$, satisfies all of the following:
\begin{enumerate}
\item $i(M)\in\{0,\infty\}$,
\item for every prime $p$, $t(p):=\#\{i\,| \,c(p^i)>0\}\leq k+1$,
\item if $i(M)=\infty$, then $t(2):=\#\{i\,| \,c(2^i)>0\}\leq k$.
\end{enumerate}
\end{definition}

\begin{question}[{\cite[Question 10.2.1]{BG}}]  
Suppose that a Smale-Barden manifold satisfies the conditon G-K. Does it admit a Sasakian structure?
\end{question}

It is known from \cite[Corollary 10.2.11]{BG} that the condition G-K is necessary for the existence of 
K-contact, and hence Sasakian structures. However, it is not known to what extent it is sufficient. 
K\'ollar \cite{K} have found subtle obstructions to the existence of Sasakian structures on Smale-Barden manifolds with
$k=0$. Also, the recent works  \cite{CMRV} and \cite{MRT} showed the existence of \emph{homology Smale-Barden} 
and true Smale-Barden manifolds which carry a K-contact but do not carry any \emph{semi-regular} Sasakian structure. 
Therefore, finding sufficient conditions is an important problem.

As the condition G-K is a bound on the numbers $t(p)$
controlling the size of the torsion part at primes $p$, it is natural to start by focusing on the case of small values of $t(p)$. Let
 $$
  \bt(X)=\max\{ t(p)|\,p \text{  prime}\}.
  $$
The case of $\bt =0$ is that of the torsion-free Smale-Barden manifolds, where
we only have regular Sasakian structures. The next case 
to analyse is $\bt = 1$. In this direction, we prove the following characterization of Sasakian structures with $\bt=1$.

\begin{theorem}\label{thm:main1} 
Let $M$ be a Smale-Barden manifold whose second integral homology has the form
 \begin{equation}\label{cpi2}
 H_2(M,\ZZ)=\ZZ^k\oplus (\bigoplus_{i=1}^r\ZZ_{m_i}^{2g_i}),
 \end{equation}
and with $i(M)=0,\infty$.
Assume that $k\geq 1$, $m_i\geq 2, g_i\geq 1$, with $m_i$ pairwise coprime, $1\leq i\leq r$. 
Then $M$ admits a semi-regular Sasakian structure.
\end{theorem}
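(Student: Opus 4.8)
The plan is to realize the prescribed homology data in \eqref{cpi2} by a Seifert bundle $\pi:M\to X$ over a suitably chosen cyclic K\"ahler $4$-orbifold $X$, following the template of the proof of Proposition \ref{prop:1-neg-pos-main}. The required $H_2(M,\ZZ)=\ZZ^k\oplus(\bigoplus_{i=1}^r\ZZ_{m_i}^{2g_i})$ must come out of the final clause of Theorem \ref{thm:H1(M)}, which gives $H_2(M,\ZZ)=\ZZ^{k'}\oplus(\bigoplus_i\ZZ_{m_i}^{2g_i})$ with $k'+1=b_2(X)$ and $g_i$ the genus of the $i$-th isotropy surface $D_i$. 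So the first design goal is to pick a smooth projective surface (most naturally a blow-up of a Hirzebruch surface $\mathbb F_n$, or of $\CP^2$) whose second Betti number is $k+1$, and then to place embedded smooth complex curves $D_1,\dots,D_r$ of genera $g_1,\dots,g_r$ on it, with assigned multiplicities $m_i$ that are pairwise coprime. Since the $D_i$ must be disjoint (pairwise coprime multiplicities would otherwise force compatibility at intersections, and here there is no constraint linking the $m_i$ across different $i$), the cleanest approach is to take the $D_i$ mutually disjoint.

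Next I would handle the three conditions of Theorem \ref{thm:H1(M)} that guarantee $H_1(M,\ZZ)=0$. Condition (1) $H_1(X,\ZZ)=0$ is automatic for the rational surfaces in play. Condition (2), surjectivity of $H^2(X,\ZZ)\to\bigoplus_i H^2(D_i,\ZZ_{m_i})$, is an arithmetic condition controlled by the intersection numbers $[D_i]\cdot(\text{generators of }H^2(X))$ modulo $m_i$; because $b_2(X)=k+1\geq 2$ there is enough room in the Picard lattice to arrange each $D_i$ to meet some ambient class to a unit mod $m_i$, and the pairwise coprimality of the $m_i$ lets one treat the factors independently via CRT. Condition (3), primitivity of $c_1(M/\mu)$ in $H^2(X-P,\ZZ)$ with $\mu=\lcm(m_i)$, is the analogue of the computation culminating in \eqref{eqn:k=1}: using the Chern class formula of Proposition \ref{prop:c1}, $c_1(M)=c_1(B)+\sum_i\frac{b_i}{m_i}[D_i]$, one expands $\mu\,c_1(M)$ and, exploiting the coprimality of the $m_i$ together with a free choice of the background line bundle $B$, solves a B\'ezout-type identity to force the relevant coefficient to be $1$. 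Since the structure is required only to be \emph{semi-regular} (not negative), the orbifold $X$ should be taken \emph{smooth} in the sense of the remark after Proposition \ref{prop:quasi-regular}; equivalently, taking $P=\emptyset$ (no cyclic quotient singular points), so that $X$ is an honest manifold and the Seifert bundle yields a semi-regular Sasakian structure by the cited result of \cite{MRT}.

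With $H_1(M,\ZZ)=0$ secured, I would establish simple connectivity exactly as in Proposition \ref{prop:1-neg-pos-main}: apply Nori's result, Proposition \ref{prop:nori}, to conclude that $\pi_1(X-\bigcup_i D_i)$ is abelian (this requires the $D_i$ to have positive self-intersection and $X$ simply connected, which I would arrange in the choice of surface and curve classes), hence $\pi_1^{\orb}(X)$ is abelian, hence $\pi_1(M)$ is abelian by the Seifert exact sequence $\ZZ\to\pi_1(M)\to\pi_1^{\orb}(X)\to 1$; combined with $H_1(M,\ZZ)=0$ this forces $\pi_1(M)=1$. Finally, to match the Barden invariant $i(M)\in\{0,\infty\}$ one must track the second Stiefel--Whitney class, i.e.\ the spin type of $M$; this is governed by parity conditions on the $m_i$ and on the ambient classes, and the hypothesis permitting both $i(M)=0$ and $i(M)=\infty$ means one has freedom to toggle between a spin and a non-spin realization by adjusting the chosen divisor classes.

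The main obstacle I expect is \emph{simultaneously} meeting all constraints: realizing the genera $g_i$ with disjoint smooth curves of the right classes on a surface of the prescribed $b_2=k+1$, while keeping all $D_i$ of positive self-intersection (needed for Nori's theorem) \emph{and} satisfying the delicate primitivity identity \eqref{eqn:k=1} for $c_1(M/\mu)$ \emph{and} hitting the correct Barden invariant. The positivity of self-intersections and the freedom to realize arbitrary large genus will likely force a careful choice such as $D_i$ in a class of the form $a_i H + \dots$ on a blow-up of $\mathbb F_n$ with $a_i$ growing, where the adjunction formula $2g_i-2=D_i^2+K_X\cdot D_i$ determines the genus; reconciling this with the coprimality of the $m_i$ and with surjectivity in condition (2) of Theorem \ref{thm:H1(M)} is where the bookkeeping concentrates, and is the step I would expect to occupy most of the detailed argument.
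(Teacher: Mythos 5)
Your overall architecture is the right one (and is the one the survey illustrates in the proof of Proposition \ref{prop:1-neg-pos-main}: pick a simply connected surface with $b_2(X)=k+1$, place isotropy curves $D_i$ of genus $g_i$ with multiplicities $m_i$, read off $H_2(M,\ZZ)$ from the last clause of Theorem \ref{thm:H1(M)}, verify conditions (1)--(3) via the B\'ezout computation for $c_1(M/\mu)$, and kill $\pi_1$ with Nori plus $H_1(M,\ZZ)=0$). But there is a genuine internal contradiction in your plan. You insist on taking the $D_i$ mutually disjoint, and at the same time you require $D_i^2>0$ in order to invoke Proposition \ref{prop:nori}. These two demands are incompatible as soon as $r\geq 2$: by the Hodge index theorem the intersection form on the N\'eron--Severi group of a smooth projective surface has signature $(1,\rho-1)$, so two classes with $[D_1]^2>0$, $[D_2]^2>0$ and $[D_1]\cdot[D_2]=0$ would span a positive-definite rank-two sublattice, which is impossible. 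Hence no surface carries two disjoint curves both of positive self-intersection, and your construction cannot be carried out for $r\geq 2$ (which is the generic case of the theorem).

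The premise that forced you into this corner is also mistaken: the $D_i$ do \emph{not} have to be disjoint. Theorem \ref{thm:munoz} (and the underlying Definition \ref{def:sing}) explicitly allows isotropy surfaces to intersect nicely provided $\gcd(m_i,m_j)=1$ when $D_i\cap D_j\neq\emptyset$ --- and pairwise coprimality of the $m_i$ is precisely a hypothesis of the theorem you are proving. Moreover, transversal crossings of branch divisors with coprime multiplicities do not create singular orbifold points (the local quotient $\CC^2/(\ZZ_{m_i}\times\ZZ_{m_j})$ is smooth and the isotropy locus stays of codimension two), so semi-regularity is not lost by letting the curves meet; note that Proposition \ref{prop:nori} is stated for curves \emph{intersecting transversally}, which is exactly the situation it is designed for. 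The correct route, as in Proposition \ref{prop:1-neg-pos-main}, is therefore to choose the $D_i$ in ample classes realizing the genera $g_i$ via adjunction (e.g.\ bidegree $(2,g_i+1)$ curves on $\CP^1\times\CP^1$, with further blow-ups away from the curves to adjust $b_2(X)$ to $k+1$), all meeting transversally; the rest of your outline (conditions (2) and (3) of Theorem \ref{thm:H1(M)}, the primitivity identity, the abelianness of $\pi_1^{\orb}(X)$, and the spin/non-spin dichotomy for $i(M)$) then goes through along the lines you describe.
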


Therefore, all Smale-Barden manifols with $k\geq 1$ and $\bt=1$ admit Sasakian structures, and moreover they admit
semi-regular Sasakian structures.

\subsection{Null Sasakian structures} 
\begin{definition} A Sasakian structure is called {\it null} if $c_1(\mathcal{F}_{\xi})=0$. 
\end{definition} 
It was shown in \cite{BG} that if a Smale-Barden  manifold admits a null Sasakian structure it must be homeomorphic to to the connected sum of at most 21 copies of $S^2\times S^3$. Morever, it is proved that any $M=\#_k(S^2\times S^3)$ with $2\leq k\leq 21$ admits a null Sasakian strcture, except, possibly, $b_1(M)=2$ and $b_2(M)=17$. The following problem was posed in \cite{BG}
\begin{question} (Open problem 10.3.2 in \cite{BG}) Find examples of null Sasakian structures on $\#_2(S^2\times S^3)$ and $\#_{17}(S^2\times S^3)$, or show that none can exist.
\end{question}
The answer is given in \cite{CMST}.
\begin{theorem}[\cite{CMST}, Theorem 1.5] Any $M=\#_k(S^2\times S^3)$ with $2\leq k\leq 21$ admits a null Sasakian structure.
\end{theorem}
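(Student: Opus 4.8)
The plan is to realize each such $M$ as the total space of a Seifert bundle over a Calabi-Yau orbifold surface. By the computation preceding Proposition \ref{prop:c1neg}, $c_1(\mathcal F_\xi)=\pi^*c_1^{\orb}(X)=-\pi^*K_X^{\orb}$, so a quasi-regular Sasakian structure $\pi\colon M\to X$ is null exactly when $K_X^{\orb}=0$; and by Proposition \ref{prop:c1}, setting the Seifert Chern class $c_1(M)=[\omega]$ equal to an ample orbifold K\"ahler class makes $M$ Sasakian. Thus I must produce a cyclic K\"ahler orbifold $X$ with $K_X^{\orb}=0$ whose Seifert total space is $\#_k(S^2\times S^3)$. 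Since the latter is simply connected, spin, and has torsion-free $H_2$ of rank $k$, Theorem \ref{thm:H1(M)} prescribes the base: $b_2(X)=k+1$, no branch divisor of positive genus (so the torsion summands $\ZZ_{m_i}^{2g_i}$ are absent), and $\pi_1^{\orb}(X)=1$. As the only simply connected Calabi-Yau surface is of K3 type, $X$ will be an orbifold K3 surface.

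For the construction, I would fix for each $j\in\{0,\dots,19\}$ a projective K3 surface $\tilde X$ whose N\'eron-Severi lattice contains a chain $E_1+\dots+E_j$ of $(-2)$-curves of type $A_j$ together with an ample class orthogonal to it; such K3s exist by Nikulin's lattice-embedding theory and the surjectivity of the period map, the extremal case $j=19$ using a K3 of the maximal Picard number $20$. Contracting the chain by Proposition \ref{prop:constr-orbi} (all self-intersections are $-2$, and $[2,\dots,2]=\tfrac{j+1}{j}$ by Proposition \ref{prop:Hirz-Jung}, so the singularity is the cyclic quotient $\tfrac1{j+1}(1,j)$) yields a cyclic K\"ahler orbifold $X$ with a single singular point of order $j+1$ and empty branch divisor. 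Contraction of a chain of $(-2)$-curves is crepant, so $K_X^{\orb}=K_X=0$ and the structure will be null; and $b_2(X)=22-j$, which equals $k+1$ for $k=21-j$, so $j$ sweeps out precisely the range $2\le k\le 21$.

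Next I would assemble the Seifert bundle $M\to X$: take a primitive ample class as $c_1(M)=[\omega]$ and a compatible local invariant at the singular point, and apply Theorem \ref{thm:H1(M)}. With no branch divisor, conditions (1) and (2) reduce to $H_1(X,\ZZ)=0$, which holds, and I would secure condition (3) --- primitivity of $c_1(M/\mu)$ in $H^2(X-\{p\},\ZZ)$ --- by an integral adjustment of the polarization; this gives $H_1(M,\ZZ)=0$ and $H_2(M,\ZZ)=\ZZ^k$ with no torsion. The orbifold $X$ is simply connected, its minimal resolution being the simply connected $\tilde X$ with simply connected exceptional chain, so $\pi_1^{\orb}(X)=1$; the exact sequence of Subsection \ref{subsec:fund} together with $H_1(M,\ZZ)=0$ then forces $\pi_1(M)=1$. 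Since the vertical bundle of the Seifert fibration is trivial, $w_2(M)=\pi^*w_2(X)=0$ as K3 is spin. By the Smale-Barden classification the spin, simply connected $5$-manifold with $H_2=\ZZ^k$ is $\#_k(S^2\times S^3)$, completing the construction for every $k$.

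The hard part is the extremal case $k=2$, i.e.\ $b_2(X)=3$: it requires contracting a full $A_{19}$ chain and hence a K3 of the maximal Picard number $20$ carrying such a configuration. Realizing this surface is a purely lattice-theoretic problem --- embedding $A_{19}\oplus\langle 2d\rangle$ primitively into the K3 lattice and invoking the Torelli theorem --- and this upper bound on the contractible corank is exactly what pins down the lower end $k\ge 2$, just as the smooth K3 gives $k\le 21$. The remaining verifications, namely the primitivity of $c_1(M/\mu)$ and the spin condition uniformly in $k$, are routine once the base orbifolds are constructed.
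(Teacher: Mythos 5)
Your overall strategy is exactly the one the survey attributes to \cite{CMST}: realize $\#_k(S^2\times S^3)$ as a Seifert bundle over a cyclic K3 orbifold with $b_2(X)=k+1$, obtained by contracting chains of $(-2)$-curves on a smooth K3 so that crepancy gives $K_X^{\orb}=0$ and hence a null structure via Proposition \ref{prop:c1neg}; the numerology $b_2(X)=22-j$, $k=21-j$, $0\le j\le 19$ correctly reproduces the range $2\le k\le 21$, with $k=2$ forcing Picard number $20$. However, two of the steps you treat as routine are precisely where the content of the proof lies. First, your deduction of $\pi_1^{\orb}(X)=1$ is invalid as written: with empty branch divisor the paper's definition gives $\pi_1^{\orb}(X)=\pi_1(X\setminus P)=\pi_1(\tilde X\setminus E)$, and simple connectivity of the resolution $\tilde X$ together with that of the exceptional set only yields $\pi_1(X)=1$, not $\pi_1(X\setminus P)=1$. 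The Kummer orbifold $T^4/\ZZ_2$ --- a cyclic K3 orbifold with trivial orbifold canonical class and simply connected minimal resolution with simply connected exceptional set --- has infinite, nonabelian $\pi_1^{\orb}$, so the implication you invoke fails inside the very class of objects you are constructing. Nori's Proposition \ref{prop:nori} is unavailable here since $E_i^2=-2<0$; a priori $\pi_1(\tilde X\setminus E)$ is only normally generated by meridians and could be a nontrivial perfect group even when $H_1(\tilde X\setminus E)=0$, so abelianness of $\pi_1^{\orb}(X)$ for the chosen configurations requires a genuine argument.

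Second, the existence of the required curve configurations --- above all an $A_{19}$ chain of irreducible rational curves on a projective K3, spanning a primitively embedded sublattice orthogonal to an ample class --- is asserted via ``Nikulin plus Torelli'' but not carried out, and this is exactly the hard case $k=2$: at rank $20$ the easy embedding criterion does not apply, and one must either compute with discriminant forms or exhibit an explicit surface (for instance an extremal elliptic K3 whose reducible fibres together with the zero section supply the chain). Primitivity of the sublattice is not cosmetic: without it even $H_1(M,\ZZ)=0$ can fail (again the Kummer lattice, an overlattice of $A_1^{16}$, illustrates this), and the primitivity of $c_1(M/\mu)$ in $H^2(X\setminus P,\ZZ)$ required by condition (3) of Theorem \ref{thm:H1(M)} is likewise deferred. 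In short: right approach and correct reduction, but the two points on which \cite{CMST} actually spends its effort are the ones left unproved here.
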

The proof of this theorem uses the theory of singular K\"ahler orbifolds. In the context of null Sasakian structures, one needs to construct a cyclic K3-orbifold with the required restrictions on the second Betti number and the approprite Seifert bundle.

\section{ Topology of Sasakian manifolds}\label{sec:sas-top}

 \subsection{Boyer-Galicki program} The following  topological 
obstructions to the existence of Sasakian structures  on a compact manifold $M$ 
of dimension $2n+1$, where known \cite{BG}.
\begin{itemize}
\item vanishing of the odd Stiefel-Whitney classes,

\item the inequality $1\,\leq\, cup(M)\,\leq\, 2n$ on the cup-length,

\item the evenness of the $p^{th}$ Betti number for $p$ odd with $1\, \leq\, p \, \leq\, n$,

\item the estimate on the number of closed integral curves of the Reeb vector field 
(there should be at least $n+1$),

\end{itemize}
 
It is
well-known that the theory of
Sasakian manifolds is, in a sense, parallel to the theory of the K\"ahler manifolds. 
There is a deep theorem of Deligne, Griffiths, Morgan and Sullivan on the
rational homotopy type of K\"ahler manifolds \cite{DGMS}. In the same spirit, rational
homotopical properties of a manifold are related to the existence
of suitable geometric structures on the manifold \cite{FOT}.
Therefore, it is important to build a version of such theory for compact Sasakian
 manifolds.
In \cite[Chapter 7]{BG}, the authors pose the following problems.
\begin{enumerate}
\item Are there obstructions to the existence of Sasakian structures expressed in terms 
of Massey products?

\item There are obstructions to the existence of Sasakian structures expressed in terms 
of Massey products, which depend on basic cohomology classes of the related $K$-contact 
structure. Can one obtain a topological characterization of them?

\item Do there exist simply connected $K$-contact non-Sasakian manifolds (open problem 7.4.1)?

\item Which finitely presented groups can be realized as fundamental groups of compact 
Sasakian manifolds?
\end{enumerate}

\subsection{Rational homotopy properties of Sasakian manifolds}
Following the program formulated by Boyer and Galicki, works \cite{BBMT, BFMT} develop the theory of rational homotopy type of such manifolds.
The well-known theorem of Deligne,Griffiths,Morgan and Sullivan\cite{DGMS}  provides formality as an obstruction to the existence of K\"ahler structure. Also, since triple Massey products obstruct formality, they also obstruct K\"ahler structures. Thus, the  question about homotopic properties related to formality is natural in the Sasakian framework.

\subsubsection{Non-formal and formal Sasakian manifolds}
It was shown in \cite{BFMT} that the triple Massey products do not obstruct Sasakian  structures. The examples below show this.
\begin{example}[Non-formal non-simply connected case,\cite{BFMT}]

Recall from \cite{CFL} that the real Heisenberg group
$H^{2n+1}$ admits a homogeneous regular Sasakian structure with its standard 1-form
$\eta\,=\, dz-\sum_{i=1}^{n} y_{i}dx_{i}$. As a manifold
$H^{2n+1}$ is just ${\mathbb{R}^{2n+1}}$ which can be
realized in terms of $(n+2)\x (n+2)$ nilpotent matrices of the form
\begin{equation}\label{Heisenbergmatrix}
A= \left(
\begin{matrix}1 &a_1 &\cdots &a_n& c \\
0 &1 &0 &\cdots & b_1\\
\vdots && \ddots & \cdots & \vdots \\
0 & \cdots & 0&1 & b_n \\
0 &\cdots &0 & 0& 1
\end{matrix}
\right),
\end{equation}
where $a_i, b_i, c\,\in\, \RR$, $i\,=\,1\, ,\cdots\, , n$. 
Then a global system of
coordinates ${x_i\, , y_i\, , z}$ for $H^{2n+1}$ is defined by
$x_{i}(A)\,=\,a_{i}$, $y_{i}(A)\,=\,b_{i}$, $z(A)\,=\,c$. A standard
calculation shows that we have a basis for the left invariant $1$-forms
on $H^{2n+1}$ which consists of
$$
\{dx_{i}\, ,\, dy_{i}\, ,\, dz-\sum_{i=1}^{n} x_{i}dy_{i}\}\, .
$$

Consider the discrete subgroup $\Gamma$ of $H^{2n+1}$ defined
by the matrices of the form given in \eqref{Heisenbergmatrix} with integer entries.
The quotient manifold
 $$
 M\,:=\,\Gamma\backslash H^{2n+1}
 $$ 
is compact. 
The $1$-forms $dx_i$,
$dy_i$ and $dz-\sum_{i=1}^{n} x_{i}dy_{i}$ descend to $1$-forms $\alpha_i$, $\beta_i$
and $\gamma$ respectively on $M$. We note that
$\{\alpha_i\, , \beta_i\, , \gamma\}$ is a basis for the $1$-forms on $M$. 
Let $\{X_i\, , Y_i\, , Z\}$ be the basis of vector fields on $M$ that is dual to
the basis $\{\alpha_i\, , \beta_i\, , \gamma\}$.
Define the almost contact metric structure 
$(\eta\, , \xi\, , \phi\, , g)$ on $M$ by
 $$
 \eta\,=\, \gamma\, , \quad \xi\,=\,Z\, , \quad \phi(X_i)\,=\,Y_i\, ,\quad
\phi(Y_i)\,=\,-X_i
$$
$$
\phi( \xi)\,=\,0\, , \quad
 g\,=\,\gamma^2 + \sum_{i=1}^{n} ((\alpha_i)^2 + (\beta_i)^2)\, .
$$
Then one can check that $(\eta\,, \xi\,, \phi\, , g)$ is a regular Sasakian structure on $M$.
 Moreover, $M$ is non-formal (this is checked in \cite{BFMT}). 
\end{example}

An example of non-simply connected {\it formal} Sasakian manifold is also constructed in \cite{BFMT}. It is a non-trivial circle bundle over $B^4=(L_3/\Gamma)\times S^1$, where $L_3$ is the simply connected non-nilpotent solvable Lie group and $\Gamma$ is a co-compact lattice. 
 It is checked  that $M^5$ is formal by calculating the model of the constructed Boothby-Wang fibration.

\begin{example}[Non-formal simply connected case]
The series of examples of non-formal simply connected Sasakian manifolds is given by the following result \cite{BFMT}.

\begin{theorem}\label{1-connected-sasak: non-formal}
For every $n\,\geq\, 3$, there exists a simply connected compact regular Sasakian manifold 
$M^{2n+1}$, of dimension $2n+1$, which is non-formal. More precisely, there is a 
non-trivial $3$-sphere bundle over $(S^2)^{n-1}$ which is a non-formal simply connected 
compact regular Sasakian manifold.
\end{theorem}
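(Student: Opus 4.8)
*For every $n\geq 3$, there exists a simply connected compact regular Sasakian manifold $M^{2n+1}$ of dimension $2n+1$ which is non-formal. More precisely, there is a non-trivial $3$-sphere bundle over $(S^2)^{n-1}$ which is a non-formal simply connected compact regular Sasakian manifold.*

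The plan is to realize $M^{2n+1}$ as a regular Sasakian manifold via the Boothby-Wang construction, and then detect non-formality through a nontrivial triple Massey product computed on Tievsky's model. The starting point will be to build a suitable compact K\"ahler base whose circle bundle, with Chern class equal to an integral K\"ahler form, is the desired total space. Since we want a $3$-sphere bundle over $(S^2)^{n-1}$, I would look for the Sasakian $M^{2n+1}$ as the total space of a circle bundle over a K\"ahler manifold $X^{2n}$ which is itself an $S^2$-bundle (a projectivization $\PP(L\oplus\cO)$, i.e.\ a $\CP^1$-bundle) over $(S^2)^{n-1}=(\CP^1)^{n-1}$. The $S^1$ fiber of the Boothby-Wang fibration combines with the $S^2$ fiber to produce an $S^3$-fiber, so the composite $M\to(\CP^1)^{n-1}$ becomes the claimed non-trivial $3$-sphere bundle; one must choose the line bundle $L$ and the Chern class so that the bundle is non-trivial and the total space is simply connected (the latter follows from the exact homotopy sequence, since $(\CP^1)^{n-1}$ is simply connected and the fibers are connected).

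First I would pin down the base $X$ and record its cohomology ring: for $X=\PP(L\oplus\cO)\to(\CP^1)^{n-1}$ the ring $H^*(X)$ is generated over $H^*((\CP^1)^{n-1})$ by the relative hyperplane class, and the basic cohomology $H_B^*(M)$ of the Sasakian manifold is naturally identified with $H^*(X)$ since $M\to X$ is a Boothby-Wang (regular) fibration. Next I would invoke Theorem \ref{Tievsky model}: a model for $M$ is the DGA $(H_B^*(M)\otimes\bigwedge(x),D)$ with $|x|=1$, $D\big(H_B^*(M)\big)=0$ and $Dx=[d\eta]_B=[\omega]$, the K\"ahler class pulled back from $X$. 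The whole non-formality question is thereby reduced to an explicit, finite-dimensional computation inside this elementary extension of $\big(H^*(X),0\big)$.

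The decisive step is to exhibit an honest non-vanishing triple Massey product in this model, after which Theorem \ref{theo:Massey products} immediately gives non-formality. The mechanism is standard for these ``$S^1$ times even degree'' generators: pick classes $[a],[b]\in H^*(X)$ with $ab=0$ in $H^*(X)$ but chosen so that $[\omega]$ links them nontrivially. Concretely, because $Dx=[\omega]$, a product of the form $a\cdot[\omega]$ that vanishes in cohomology of $X$ becomes exact in the model via $D(a\,x)=\pm a\,[\omega]$, which manufactures the needed defining cochains $a_{1,2},a_{2,3}$ for a Massey triple $\la[a_1],[a_2],[a_3]\ra$ with $a_2=[\omega]$ (or involving $x$). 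I would then compute the resulting class $[a_1\cdot a_{2,3}\pm a_{1,2}\cdot a_3]$ explicitly in $H^*(X)\otimes\bigwedge(x)$ and verify it is nonzero modulo the indeterminacy subgroup $[a_1]\cdot H^*+[a_3]\cdot H^*$. The main obstacle will be precisely the bookkeeping of this indeterminacy: one must choose $L$ (equivalently the degree of the relative Chern class) and the three classes so that the representative survives, rather than being absorbed into $[a_1]\cdot H^{*} + [a_3]\cdot H^{*}$. This is where the condition $n\geq 3$ enters, since one needs enough independent degree-$2$ generators from the $(\CP^1)^{n-1}$ factors to have room for a product to vanish while the Massey representative does not, and I would verify the bound $n\geq 3$ is exactly what makes the relevant cohomology group nontrivial and the indeterminacy a proper subgroup. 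Finally, I would confirm simple connectivity from the homotopy sequence and non-triviality of the $S^3$-bundle from the nonzero Chern/Euler class of $L$, completing the construction.
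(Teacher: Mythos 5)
The survey itself does not reproduce a proof of this theorem (it is stated as an example with a citation to \cite{BFMT}), so I am comparing your plan against the argument in that reference. Your overall strategy is the right one and coincides with it: take $X=\PP(L\oplus\cO)\to(\CP^1)^{n-1}$ with an integral K\"ahler class restricting to the generator on the $\CP^1$-fibres, form the Boothby--Wang circle bundle $M\to X$ (so that the composite $M\to(\CP^1)^{n-1}$ is an $S^3$-bundle, the fibre being the Hopf bundle over each fibre sphere), pass to the model $(H^*(X)\otimes\bigwedge(x),\,Dx=[\omega])$, and detect non-formality by a non-zero triple Massey product via Theorem \ref{theo:Massey products}. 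The reductions you make (identification of $H_B^*(M)$ with $H^*(X)$, simple connectivity from the homotopy sequence of the $S^3$-bundle, non-triviality of the bundle forced by non-formality) are all correct.

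The gap is that the decisive step is announced but not performed: you never exhibit the classes $a_1,a_2,a_3$ nor verify that the resulting Massey product is non-zero, and this is the entire content of the theorem. Worse, the one concrete suggestion you do make --- taking $a_2=[\omega]$ --- cannot work: in the model $[\omega]=Dx$ is exact, and for any triple product whose middle entry is exact one may choose the defining system $a_{1,2}=(-1)^{|a_1|}a_1x$, $a_{2,3}=xa_3$, whose representative cancels, so $0\in\la a_1,[\omega],a_3\ra$ and the product is zero by definition. The correct mechanism is to take all three $a_i\in H^2(X)$ with $a_ia_{i+1}=\omega\,c_{i,i+1}$ in $H^*(X)$ (which is automatic, since $L_\omega\colon H^2\to H^4$ is surjective onto $\omega H^2$ and every degree-4 product can be so written), set $a_{i,i+1}=\pm c_{i,i+1}x$, and check that $\pm a_1c_{23}\pm c_{12}a_3\neq 0$ in $H^4(X)$; for instance, with $c_1(L)$ equal to one of the factor classes, the product $\la v,v,u\ra$ of two factor classes of $(\CP^1)^{n-1}$ is non-zero. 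Your concern about the indeterminacy, which you flag as the main obstacle, is in fact moot here: the indeterminacy lives in $[a_1]H^3+[a_3]H^3$ of the model, and $H^3$ of the model equals $\ker(L_\omega\colon H^2(X)\to H^4(X))\cdot x$, which vanishes by hard Lefschetz on the K\"ahler manifold $X$. So the proposal reduces the problem correctly but stops exactly where the proof begins, and its only explicit suggestion for the Massey triple would fail.
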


\end{example}

\subsubsection{Vanishing of higher order Massey products of Sasakian manifolds}
Now we show that the higher Massey products
rule out the possibility of existence of Sasakian structures on a 
compact manifold. 

\begin{theorem}[\cite{BFMT}]\label{prop:higher-massey}
Let $M$ be a compact Sasakian manifold. Then, all the higher order
Massey products for $M$ are zero. 
\end{theorem}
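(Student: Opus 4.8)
The plan is to exploit Tievsky's model (Theorem~\ref{Tievsky model}), which gives a very small and explicit DGA, namely $(H_{B}^{*}(M)\otimes\bigwedge(x),\,D)$ with $|x|=1$, $D(H_{B}^{*}(M))=0$ and $Dx=[d\eta]_{B}$, computing the same minimal model as $M$. Since Massey products are homotopy invariants and can be computed on any model of the DGA of $M$, it suffices to show that all higher Massey products vanish in this model. The key structural feature I would exploit is that the model has only a \emph{single} odd generator $x$ in a differential graded algebra built over the commutative algebra $H_B^*(M)$ with zero differential; every cochain in positive degree therefore decomposes canonically as $u + x\cdot v$ with $u,v\in H_B^*(M)$, and the differential acts by $D(u+x v) = [d\eta]_B\cdot v$ (up to sign), using $x^2=0$.

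First I would set up the defining system for a $t$-fold Massey product $\la[a_1],\dots,[a_t]\ra$ with $t\geq 3$, using the elements $a_{i,j}$ satisfying the relations \eqref{eqn:gm}. Because cohomology classes of $M$ pull back to (combinations of) basic classes and products with $x$, I would first argue that one may choose representatives $a_i$ of a normalized form, and then analyze how the inductively defined $a_{i,j}$ must look. The crucial observation is that the only source of exactness in this model comes from the generator $x$: an element $u + x v$ is exact precisely when $u = [d\eta]_B\cdot w$ for some basic $w$ and $v$ is a coboundary, i.e. $v=0$ in the (zero-differential) algebra $H_B^*(M)$ forces $v$ itself to be a cocycle, so the exact elements are exactly those of the form $[d\eta]_B\cdot w + x\cdot(\text{cocycle boundary})$. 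I would use this to show that whenever a product of consecutive cocycles is exact, the primitive $a_{i,j}$ can be chosen to lie in $H_B^*(M)$ (no $x$-component is needed), because the $x$-component of any product of basic-type classes already vanishes or can be absorbed.

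The heart of the argument is a \emph{propagation} step: show by induction on $j-i$ that all the $a_{i,j}$ can be taken in the subalgebra $H_B^*(M)$ (equivalently, with trivial $x$-component), so that the final Massey product representative $\sum_k (-1)^{|a_{1,k}|} a_{1,k}\cdot a_{k+1,t}$ lies in $H_B^*(M)$ and, being a coboundary by construction, is in fact $[d\eta]_B$ times a basic class — hence lies in the indeterminacy coming from multiplication by $x$, so $0$ belongs to the Massey product set. Concretely, since $D$ vanishes on $H_B^*(M)$, products of elements of $H_B^*(M)$ are automatically cocycles, and the defining equations \eqref{eqn:gm} force the relevant products to be $D$-exact; but the only way a basic element is $D$-exact is to be a multiple of $[d\eta]_B$, which is exactly the class killed by $x$ in the minimal model, feeding the indeterminacy. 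I would then conclude that every higher Massey product contains $0$, i.e. is zero in the sense defined before Theorem~\ref{theo:Massey products}.

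The main obstacle I anticipate is the bookkeeping in the inductive propagation step, specifically controlling the signs $(-1)^{|a_{i,k}|}$ and verifying that the $x$-components of the intermediate $a_{i,j}$ genuinely cancel or can be removed by a choice of primitive, rather than merely being cocycles. The triple case is a useful warm-up and essentially direct, but the general $t$-fold case requires care that the indeterminacy subspace (generated by multiplication with lower Massey products and with $x$) exactly absorbs the $x$-component that appears. I would handle this by separating each $a_{i,j}$ into its basic part and its $x$-part, tracking the two components through \eqref{eqn:gm} in parallel, and showing the $x$-part contributes only terms lying in the indeterminacy. Once that separation is established, the vanishing follows formally from $D|_{H_B^*(M)}=0$ and the one-dimensionality of the odd generation.
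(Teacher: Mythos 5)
Your starting point is right: Tievsky's model $(H_B^*(M)\otimes\bigwedge(x),D)$ is exactly what the paper uses, and your computation that $D(u+xv)=\pm\,\omega v$ with $\omega=[d\eta]_B$, so that the exact elements are precisely those in $\omega\cdot H_B^*(M)$, is correct. But the argument you build on top of it contains a fatal inversion and omits the essential geometric input. The inversion: you claim the primitives $a_{i,j}$ ``can be chosen to lie in $H_B^*(M)$ (no $x$-component is needed)''. Since $D$ vanishes identically on $H_B^*(M)$, a purely basic element is a primitive only of $0$; the primitive of $\omega v$ is $xv$ plus a cocycle, so every nontrivial $a_{i,j}$ \emph{must} carry an $x$-component, and those components are where the whole content of the defining system lives (writing $a_{i,j}=b_{i,j}+x\,c_{i,j}$, the equations \eqref{eqn:gm} split into $\omega c_{i,j}=\sum\pm\, b_{i,k}b_{k+1,j}$ together with a separate constraint on the $x$-parts). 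Your propagation step therefore collapses: if all $a_{i,j}$ were basic, the defining equations would force the sums $\sum\pm\, a_{i,k}a_{k+1,j}$ to vanish identically, which is far stronger than exactness. Likewise, the final representative is not ``a coboundary by construction''; proving it is exact is precisely the point.

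Second, and decisively: your argument is purely formal in the model and nowhere distinguishes $t=3$ from $t\geq 4$, yet compact Sasakian manifolds can have nonzero \emph{triple} Massey products --- the Heisenberg nilmanifold $\Gamma\backslash H^{2n+1}$ appearing earlier in this survey is compact, regular Sasakian, and non-formal precisely because of such a product. Any argument yielding vanishing for all $t\geq 3$ therefore proves too much and must be wrong. What is missing is the hard Lefschetz input: the paper's proof first establishes Proposition \ref{prop:Leschetz-higher-massey}, whose hypothesis that $(\mathcal{A},0)$ satisfies the hard Lefschetz property with respect to $\omega$ does real work (it controls the kernel and cokernel of multiplication by $\omega$, and combined with a degree count that only closes for $m\geq 4$ it allows one to kill the representative), and then applies it to $\mathcal{A}=H_B^*(M)$, $\omega=[d\eta]_B$, using El Kacimi's basic hard Lefschetz theorem for the transversally K\"ahler foliation. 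Without that input, the purely algebraic statement you are implicitly trying to prove about an arbitrary elementary extension of a zero-differential DGA is false.
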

\noindent {\it Sketch of proof}.
Let $(\mathcal{A}\,=\,\bigoplus_{i=0}^{2n}A^i,0)$ be a graded differential algebra with a
non-zero element $\omega\,\in\, A^2$. For every $0\,\leq\, k \,\leq\, n$,
define the Lefschetz map
$$
L_\omega\,:\, A^{n-k} \,\longrightarrow\, A^{n+k}\, ,\ \
\beta\,\longmapsto\, \beta\cdot\omega^{n-k}\, .$$
We say that $\mathcal{A}$ satisfies the hard Lefschetz property if $L_\omega$ is an
isomorphism for every $0 \,\leq\, k \,\leq \,n$. 

\begin{proposition}\label{prop:Leschetz-higher-massey}
Let $(\mathcal{A}\,=\,\bigoplus_{i=0}^{2n} A^i,\, 0)$ be a differential graded commutative
algebra, and let $\omega\,\in\, A^2$ be a nondegenerate element, such that the hard
Lefschetz property with respect to $\omega$ holds. Consider the elementary extension
$(\mathcal{A}\otimes\bigwedge (y), \,d)$ of $(\mathcal{A},\, 0)$, where $\deg(y)\,=\,1$ and
$dy\,=\,\omega$. Then the higher order Massey products $\langle a_1,\cdots,
 a_m\rangle$, where $a_i\,\in\, H^*(\mathcal{A}\otimes \bigwedge (y),\, d)$ and
$m\,\geq\, 4$, all vanish.
\end{proposition}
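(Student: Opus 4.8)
The plan is to exploit the structure of the Tievsky-type model $(\mathcal{A}\otimes\bigwedge(y),\,d)$ together with the hard Lefschetz property to show that any Massey product $\langle a_1,\dots,a_m\rangle$, $m\geq 4$, contains the zero class. First I would identify the cohomology $H^*(\mathcal{A}\otimes\bigwedge(y),\,d)$ explicitly. Since $dy=\omega$ and $d$ vanishes on $\mathcal{A}$, the differential acts by $d(\alpha+\beta y)=(-1)^{|\beta|}\beta\,\omega$ for $\alpha,\beta\in\mathcal{A}$ (up to sign conventions). A cocycle $\alpha+\beta y$ is closed iff $\beta\,\omega=0$, i.e.\ $\beta\in\ker L_\omega$ in the appropriate degree, and it is exact iff $\alpha\in\mathcal{A}\cdot\omega=\operatorname{im}L_\omega$ and $\beta=0$. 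The hard Lefschetz hypothesis is exactly what controls $\ker L_\omega$ and $\operatorname{im}L_\omega$: in the middle-and-below range $L_\omega$ is injective, so $\ker L_\omega$ is concentrated in the upper half, and by Poincar\'e-type duality the structure of $H^*$ becomes a primitive decomposition. The upshot I would aim to prove is a clean description: every cohomology class of the extension has a canonical representative, and in particular each $[a_i]$ can be represented by a closed element of a controlled form.

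The core of the argument is to produce, for each Massey system, defining elements $a_{i,j}$ satisfying \eqref{eqn:gm} whose final combination is exact. The key idea I would use is that the hard Lefschetz property lets one \emph{solve} the equations $d\,a_{i,j}=\sum_k(-1)^{|a_{i,k}|}a_{i,k}\cdot a_{k+1,j}$ in a preferred way, by choosing the $y$-component of each $a_{i,j}$ via the inverse of $L_\omega$. Concretely, whenever the right-hand side of the defining relation is of the form $\beta\,\omega$ (which it must be, since it is a coboundary), the Lefschetz isomorphism furnishes a canonical primitive, namely $a_{i,j}$ with $y$-component $L_\omega^{-1}$ applied to the relevant piece. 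Making these canonical choices throughout, I expect the terminal sum $\sum_{k=1}^{m-1}(-1)^{|a_{1,k}|}a_{1,k}\cdot a_{k+1,m}$ to collapse: the telescoping structure of \eqref{eqn:gm}, combined with the fact that the chosen primitives invert multiplication by $\omega$, should force this sum to lie in the image of $d$, so that $0$ belongs to the Massey product.

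The step I expect to be the main obstacle is bookkeeping the bigraded structure and the signs while verifying that the canonically chosen primitives are mutually compatible across the whole array $\{a_{i,j}\}$. Two subtleties must be handled. First, one must check that the right-hand sides of all the defining relations actually land in $\operatorname{im}L_\omega=\mathcal{A}\cdot\omega$ so that $L_\omega^{-1}$ is applicable; this should follow inductively, since each lower-order Massey product is assumed defined and trivial, but the degree ranges must be tracked to ensure we stay where hard Lefschetz gives an isomorphism rather than merely injectivity. Second, the products $a_{1,k}\cdot a_{k+1,m}$ mix the $\mathcal{A}$-parts and the $y$-parts, and one must verify that after imposing the Lefschetz-inverse choices the $y$-free part of the terminal sum is a genuine coboundary $\beta\,\omega$ rather than a nonzero element of $H^*$. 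I would organize this by writing each $a_{i,j}=u_{i,j}+v_{i,j}\,y$ with $u_{i,j},v_{i,j}\in\mathcal{A}$, translating \eqref{eqn:gm} into coupled equations for the $u$'s and $v$'s, and then using $L_\omega^{-1}$ to define the $v$'s so that the $u$-equation becomes automatically solvable; the triviality of the lower Massey products feeds the induction that keeps every intermediate expression in the range where $L_\omega$ is invertible. Once the array is constructed, the final collapse is a formal consequence, giving $0\in\langle a_1,\dots,a_m\rangle$ and hence the vanishing of all higher Massey products for $m\geq 4$.
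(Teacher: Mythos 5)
The survey itself does not prove this proposition: it is quoted from \cite{BFMT} and then applied via Tievsky's model and El Kacimi's transverse hard Lefschetz theorem, so there is no in-paper argument to compare with line by line, and I assess your plan on its own terms. Your preliminary analysis is correct: $d(\alpha+\beta y)=\pm\beta\,\omega$, the coboundaries are exactly $\omega\cdot\mathcal{A}\subset\mathcal{A}$, the cocycles are the $\alpha+\beta y$ with $\beta\in\ker L_\omega$, and hard Lefschetz pushes $\ker L_\omega$ into degrees $\geq n$ and $\operatorname{coker}L_\omega$ into degrees $\leq n$. But the central mechanism you propose --- choose the $y$-components $v_{i,j}$ of a defining system ``canonically'' via $L_\omega^{-1}$ and conclude that the terminal sum collapses --- cannot be right, for a structural reason: nowhere does your argument use the hypothesis $m\geq 4$. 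Applied verbatim to $m=3$ it would show that every triple Massey product vanishes in any such elementary extension, hence on every compact Sasakian manifold; this contradicts the non-formal Sasakian examples appearing in this very survey (the Heisenberg nilmanifold $\Gamma\backslash H^{2n+1}$ and the simply connected sphere bundles of \cite{BFMT}), whose non-formality is detected precisely by non-vanishing triple Massey products, computable in the Tievsky model. So the claimed ``formal collapse'' of $\sum_{k}\pm\, a_{1,k}\cdot a_{k+1,m}$ is not a consequence of your choices; it is the entire content of the proposition, and the point where $m\geq 4$ must enter is exactly the step you leave unproved.

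There are also concrete technical obstacles inside the construction. First, $L_\omega\colon A^{p}\to A^{p+2}$ is an isomorphism only at $p=n-1$; elsewhere it is merely injective ($p\leq n-1$) or merely surjective ($p\geq n-1$), so ``$L_\omega^{-1}$ applied to the relevant piece'' is undefined in general, while the elements $\sum\pm\,u_{i,k}u_{k+1,j}$ occur in arbitrary degrees. Second, since $\operatorname{im}d=\omega\mathcal{A}\subset\mathcal{A}$ has no $y$-component, a defining system exists only if the $y$-component of every intermediate sum vanishes identically in $\mathcal{A}$, i.e. $\sum_{k}\pm\bigl(u_{i,k}v_{k+1,j}\pm v_{i,k}u_{k+1,j}\bigr)=0$ on the nose, not merely in cohomology; your prescription for the $v$'s does nothing to secure this at the next level of the induction. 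The argument of \cite{BFMT} instead exploits the ring structure of $H^{*}(\mathcal{A}\otimes\bigwedge(y))$ forced by hard Lefschetz --- classes of degree $\leq n$ are represented in $\mathcal{A}$, classes of degree $\geq n+1$ are represented in $\mathcal{A}y$, products of two classes of the latter type vanish because $y^{2}=0$, and products of low-degree classes landing in degree $>n$ are automatically exact since $L_\omega$ is onto there --- combined with a degree count on the array $\{a_{i,j}\}$ in which the length $m\geq 4$ is essential. I would rebuild the proof around that structure rather than around canonical primitives.
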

Having this proposition we continue as follows.
We know that Massey products on a manifold $M$
can be computed by using any model for $M$. Now, let $(M,\eta\, , \xi\, , \phi\, ,g)$ be 
a compact Sasakian manifold of dimension $2n+1$.
Put $\mathcal{A}\,=\,H_{B}^{*}(M)$ the basic cohomology of $M$. From Theorem \ref{Tievsky model} we know that
$(\mathcal{A}\otimes\bigwedge(x),\, D)$, with $|x|\,=\,1$, $D(\mathcal{A})\,=\,0$ and $Dx\,=\,[d\eta]_{B}$,
is a model for $M$.

Since $(M,\eta\, , \xi\, , \phi\, ,g)$ is a Sasakian manifold, 
there is the orthogonal decomposition of the tangent bundle $TM$ of $M$
$$
TM = {\mathcal D} \oplus {\mathcal L}\, ,
$$
where ${\mathcal L}$ is the trivial line subbundle generated by $\xi$, and the transversal
foliation ${\mathcal D}$ is K\"ahler with respect to $\omega\,=\,[d\eta]_{B}\in H_{B}^{2}(M)$.
Then, we can apply El Kacimi's basic hard Lefschetz Theorem for transversally K\"ahler foliations $\mathcal{F}$
on a compact manifold $N$ such that $H_{B}^{cod\,\mathcal{F}}(N)\neq\,0$. (See \cite{EK}
for a more complete statement of the theorem.)
This implies that $(\mathcal{A}, 0)$ satisfies the hard Lefschetz property with respect to $\omega$ and so, by 
Proposition \ref{prop:Leschetz-higher-massey}, all higher Massey products 
of the model $(\mathcal{A}\otimes\bigwedge(x), D)$ are zero. 

\hfill$\square$

\subsection{K-contact non-Sasakian manifolds}
Now we answer the question posed in \cite{BG}: are there {\it simply-connected} K-contact manifolds which do not carry Sasakian structures? Note that this is an analogue of the classical question about the existence of symplectic manifolds with no K\"ahler structure (see \cite{TO}). It is interesting to note that such manifold do exist in each odd dimension. However, the methods of constructing such manifolds are quite different: in dimensions $>7$ the problem is settled by pure rational homotopy theory \cite{BFMT,HT}, in dimension 7 one needs a more refined approach, using rational homotopy theory combined with symplectic surgery \cite{G1} and Donaldson's hyperplane theorem \cite{Auroux}. The result can be found in \cite{MT1}, and we present the proof in order to show the difference.  
\vskip6pt
\subsubsection{ Examples in dimensions dimension $>7$}
\begin{theorem}[\cite{BFMT}]\label{thm:k-contact-ns}
Let $M$ be a simply connected compact symplectic manifold of dimension $2k$
with an integral symplectic form $\omega$. Assume that the
quadruple Massey product in $H^*(M)$ is non-zero. There exists a sphere bundle 
 $$
 S^{2m+1}\,\lrightarrow\, E\,\lrightarrow\, M\, ,
 $$
for $m+1\,>\,k$, such that the total space $E$ is $K$-contact, but $E$ does not
admit any Sasakian structure.
\end{theorem}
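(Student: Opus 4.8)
The plan is to build $E$ as a Boothby--Wang fibration over a symplectic manifold manufactured from $M$, so that the K-contact structure is automatic, and then to rule out Sasakian structures by transporting the nonzero quadruple Massey product of $M$ up to $E$ and invoking Theorem \ref{prop:higher-massey}. Note that a nonzero quadruple Massey product already forces $M$ to be non-formal (Theorem \ref{theo:Massey products}), hence non-K\"ahler, which is what keeps the construction from producing a Sasakian $E$.

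First I would realize the integral class $[\omega]$ as $c_1(L)$ for a Hermitian line bundle $L\to M$, form the rank $m+1$ bundle $L\oplus\CC^m$, and pass to its projectivization $W=\PP(L\oplus\CC^m)$, a $\CP^m$-bundle over $M$ with projection $\pi_W$. By the Thurston/minimal-coupling construction for symplectic fibrations, $W$ carries a symplectic form $\Omega_W$ with class $\xi+C\,\pi_W^*[\omega]$, where $\xi=c_1(\cO_W(1))$ is the relative hyperplane class (restricting to the Fubini--Study class on each fiber) and $C$ is a large positive integer; both summands being integral, $[\Omega_W]\in H^2(W,\ZZ)$. The circle bundle $E\to W$ with Chern class $[\Omega_W]$ is then a Boothby--Wang fibration, hence its compact total space $E$ is K-contact. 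Restricting over a point of $M$, the term $C\,\pi_W^*[\omega]$ dies and the fiber of $E\to M$ becomes the Hopf circle bundle over $\CP^m$, namely $S^{2m+1}$, so $E\to M$ is the desired sphere bundle. Simple connectivity of $E$ follows from the homotopy exact sequence of $S^{2m+1}\lrightarrow E\lrightarrow M$ together with $\pi_1(M)=1$ and (for $m\ge1$) $\pi_1(S^{2m+1})=1$.

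Next I would show $E$ is not Sasakian. By Theorem \ref{prop:higher-massey} it suffices to exhibit a nonzero higher Massey product on $E$, and the natural candidate is $\pi^*$ of the nonzero quadruple Massey product of $M$. Because $m+1>k$ forces $2m+2>\dim M=2k$, the Euler class in $H^{2m+2}(M)$ vanishes and the Gysin sequence splits into short exact sequences $0\to H^i(M)\to H^i(E)\to H^{i-2m-1}(M)\to 0$; thus $\pi^*$ is injective, and it is an isomorphism in every degree $i\le 2k$ since the new classes appear only in degrees $\ge 2m+1>2k$. Massey products are natural for the DGA morphism $\pi^*$, so $\pi^*\la a_1,a_2,a_3,a_4\ra\subseteq\la \pi^*a_1,\pi^*a_2,\pi^*a_3,\pi^*a_4\ra$. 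The target lies in degree $d=p_1+p_2+p_3+p_4-2\le 2k$, and every cohomology class of $E$ that can enter the indeterminacy of the product—the factors multiplying the $\pi^*a_i$, lying in $H^{d-p_i}(E)$, together with the lower-order Massey products on which definedness rests—sits in degree $<2m+1$ and is therefore a $\pi^*$-image. Hence the indeterminacy on $E$ is exactly $\pi^*$ of the indeterminacy on $M$, so $0\notin\la a_1,a_2,a_3,a_4\ra$ yields $0\notin\la \pi^*a_1,\pi^*a_2,\pi^*a_3,\pi^*a_4\ra$, and $E$ carries a nonzero quadruple Massey product.

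The main obstacle is this last point: ensuring that passing to $E$ does not enlarge the indeterminacy and thereby accidentally annihilate the quadruple Massey product. This is precisely what $m+1>k$ buys, since it pushes all the ``new'' cohomology of $E$ into degrees strictly above the degree of the product, so that both the defining systems and the indeterminacy are faithfully pulled back from $M$; the one place requiring genuine care is the bookkeeping of the quadruple indeterminacy, which—unlike the triple case—also involves the constituent triple products. By contrast, the K-contact construction and simple connectivity are routine applications of the Boothby--Wang theorem and the homotopy exact sequence.
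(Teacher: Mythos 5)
Your proposal is correct, and the half of the argument that does the real work---ruling out Sasakian structures by showing that the quadruple Massey product of $M$ survives on $E$ and then invoking Theorem \ref{prop:higher-massey}---is exactly the paper's argument: the hypothesis $m+1>k$ kills the Euler class in $H^{2m+2}(M)=0$, makes $\pi^*$ a cohomology isomorphism through degree $2m\ge 2k$, and hence lets the product and its indeterminacy be transported faithfully. The one step you should tighten is the indeterminacy bookkeeping: defining systems are cochains, not cohomology classes, so ``every class entering the indeterminacy is a $\pi^*$-image'' is not quite a proof; the clean statement is that the minimal models of $M$ and $E$ agree through degree $2m$, and a quadruple product of output degree $d\le 2k\le 2m$ together with all elements $a_{i,j}$ of its defining systems (which have degrees $\le d-1$) lives entirely in that range, so the two Massey products coincide under $\pi^*$ and $0\notin\la a_1,a_2,a_3,a_4\ra$ passes to $E$. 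Where you genuinely diverge from the paper is the construction of the K-contact structure: the paper (following \cite{BFMT}) takes the sphere bundle of $L\oplus\CC^m$ and applies Lerman's contact fiber bundle theorem \cite{L2} to endow it directly with a K-contact structure, whereas you projectivize, put a Thurston form in the integral class $\xi+C\,\pi_W^*[\omega]$ on $\PP(L\oplus\CC^m)$, and take the Boothby--Wang fibration of that class. Your route avoids citing Lerman at the cost of one extra layer and of checking that the resulting circle bundle really is an $S^{2m+1}$-bundle over $M$ (it is: it is the unit circle bundle of $\cO_W(1)\otimes\pi_W^*L^{\otimes C}$, restricting to the Hopf fibration over each fiber $\CP^m$); both constructions produce a K-contact $S^{2m+1}$-bundle over $M$, which is all the statement requires.
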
 
The proof of this theorem is based on Theorem \ref{prop:higher-massey}. One shows that the non-vanishing quadruple Massey products survive in the algebraic model of $E$, which rules out Sasakian structure on $E$. On the other hand, Lerman's theorem \cite{L2} ensures the existence of a K-contact structure.By dimensional reasons we obtain the lower bound for the dimension of the K-contact non-Sasakian manifold $\dim E>7$. An earlier paper \cite{HT} contains the same result proved without using higher order Massey products but based on a cohomological calculation together with the result from \cite{WZ}.

\subsubsection{ Examples in dimension 7}

The dimension 7 deserves some attention, since  the  rational homotopy theory still plays an important role. However, it is not sufficient anymore, and one needs to use symplectic surgery and Donaldson's hyperplane theorem in symplectic geometry (which we do not discuss here refering to \cite{Auroux}, \cite{G1,GS}).

\begin{theorem} \label{thm:main}
There exist $7$-dimensional compact simply connected K-contact manifolds which do not admit a Sasakian structure.
\end{theorem}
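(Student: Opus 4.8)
The plan is to construct a 7-dimensional K-contact manifold $E$ carrying a non-zero higher-order Massey product, which by Theorem \ref{prop:higher-massey} rules out any Sasakian structure, while a Boothby-Wang-type fibration over a symplectic base provides the K-contact structure. Since Theorem \ref{thm:k-contact-ns} only yields dimension $>7$ (the construction there fibers by a sphere $S^{2m+1}$ with $m+1>k$, forcing $2m+1 + 2k \geq 9$), the dimension 7 case requires a more economical input: one needs a \emph{symplectic} $6$-manifold carrying a non-vanishing quadruple Massey product, and then a circle bundle (rather than a higher sphere bundle) over it. The total space of a Boothby-Wang $S^1$-fibration over a symplectic $6$-manifold $(X,\omega)$ with $[\omega]$ integral is $7$-dimensional and K-contact by Lerman's theorem. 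So the strategy reduces to: (i) build a simply connected symplectic $6$-manifold $X$ with a non-zero quadruple Massey product; (ii) check this Massey product survives in the model of the total space $E$ of the Boothby-Wang fibration; (iii) invoke Theorem \ref{prop:higher-massey} to conclude $E$ is not Sasakian.

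\textbf{Step (i): constructing the symplectic base.} First I would start with a non-formal simply connected space $Y$ (for instance a nilmanifold-type model, or a carefully chosen minimal model) carrying a non-zero quadruple Massey product, and realize it symplectically. The difficulty is that a non-formal simply connected \emph{symplectic} $6$-manifold with a prescribed higher Massey product is not given for free: one typically begins with a model-theoretic target, then realizes it by a manifold. This is precisely where symplectic surgery and Donaldson's hyperplane theorem enter. The idea is to take an auxiliary higher-dimensional symplectic manifold whose cohomology supports the desired Massey product, and then use Donaldson's theorem to pass to a symplectic submanifold (a symplectic divisor, obtained as the zero locus of an approximately holomorphic section of a large power $L^{\otimes k}$ of the prequantum line bundle) of the right dimension $6$. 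By the Lefschetz hyperplane theorem for Donaldson submanifolds, the low-degree cohomology and fundamental group of the divisor agree with those of the ambient manifold, so the Massey product and simple connectivity are inherited. Symplectic connected sums or blow-ups (symplectic surgery in the sense of \cite{G1, GS}) may be needed to install the non-formality in the first place.

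\textbf{Steps (ii)--(iii): propagating the Massey product and ruling out Sasakian.} Once $X$ is in hand, I would form the Boothby-Wang fibration $S^1\hookrightarrow E\to X$ with Euler class $[\omega]\in H^2(X,\ZZ)$, which is K-contact. To see $E$ is non-Sasakian, suppose for contradiction it were; then by Theorem \ref{prop:higher-massey} every higher Massey product on $E$ would vanish. So it suffices to exhibit a non-zero quadruple Massey product on $E$ itself. A model for $E$ is the elementary extension $(\,(\Omega^*(X),d)\otimes\bigwedge(y),\,D\,)$ with $Dy=\omega$; on cohomology this is the Gysin sequence. I would track the four classes realizing the quadruple Massey product on $X$ through the Gysin sequence, checking that they pull back to classes on $E$ whose pairwise products vanish (so the product is \emph{defined}) yet whose Massey product is non-zero. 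The main obstacle, and the step deserving the most care, is exactly this \emph{survival} argument: the $S^1$-bundle relation $Dy=\omega$ can create new primitives that kill a Massey product that was non-trivial downstairs, or can alter the indeterminacy subgroup so that $0$ becomes an element of the product. Controlling this requires choosing $[\omega]$ so that multiplication by $\omega$ behaves transparently on the relevant cohomology (e.g.\ so that the classes in the Massey product lie away from the image/kernel of $L_\omega$), which in turn constrains the symplectic realization in Step (i) and is the reason Donaldson's theorem, rather than a soft argument, is needed. Once survival is established, Theorem \ref{prop:higher-massey} delivers the contradiction and hence the theorem.
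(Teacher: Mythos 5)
Your overall architecture (a Boothby--Wang fibration over a suitable simply connected symplectic $6$-manifold, then an obstruction on the $7$-dimensional total space) matches the paper, but the obstruction you chose cannot work, and this is a genuine gap rather than a missing detail. Step (i) of your plan asks for a simply connected compact symplectic $6$-manifold carrying a non-zero quadruple Massey product. No such manifold exists: by Miller's theorem, every compact simply connected manifold of dimension at most $6$ is formal, so all of its Massey products (of any order) vanish. This is precisely why the paper stresses that in dimension $7$ rational homotopy theory alone is not sufficient: the higher-Massey-product obstruction of Theorem \ref{prop:higher-massey}, which settles dimensions $>7$ via Theorem \ref{thm:k-contact-ns}, has no purchase here because any candidate $6$-dimensional base is formal. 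Your fallback of producing the quadruple Massey product directly on the $7$-dimensional total space is not developed in the proposal and would face the same difficulty of finding a source of non-formality to propagate.

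The paper's actual route replaces Massey products by a Betti-number parity obstruction. By Theorem \ref{rel:betti-basic-numbers}, a compact $7$-dimensional Sasakian manifold has even $b_3$. Proposition \ref{prop:GC} produces a simply connected symplectic $6$-manifold $(M,\omega)$ with $\dim\ker\bigl(L_\omega:H^2(M)\to H^4(M)\bigr)$ odd, starting from Gompf's simply connected non-hard-Lefschetz example and, if the kernel has even dimension, performing one Cavalcanti symplectic blow-up along a symplectic surface $S$ with $\langle a,[S]\rangle\neq 0$. The Donaldson--Auroux asymptotically holomorphic technique is used only to produce that surface $S$ inside $M$, as the zero locus of a section of $E\otimes L^{\otimes k}$, not to cut a higher-dimensional symplectic manifold down to dimension $6$ as you suggest. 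The Gysin sequence of the Boothby--Wang fibration then gives $b_3(E)=b_3(M)+\dim\ker L_\omega$, which is odd since $b_3(M)$ is even by Poincar\'e duality; hence $E$ is K-contact, simply connected, and non-Sasakian. If you want to salvage your approach you must abandon the Massey-product obstruction in this dimension and use an invariant that survives formality of the base --- the parity of $b_3$ is exactly such an invariant.
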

Let us sketch the proof (we just shorten the proof of \cite{MT1}).

\begin{proposition} \label{prop:GC}
There exists a simply connected $6$-dimensional symplectic manifold $(M,\omega)$ such that
$\dim \ker (L_\omega: H^2(M)\rightarrow H^4(M))$ is odd.
\end{proposition}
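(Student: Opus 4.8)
The map in question is cup product with the symplectic class, $L_\omega(\beta)=\beta\cup[\omega]$, and its kernel is exactly the radical of the symmetric bilinear form $Q(\alpha,\beta)=\int_M \alpha\cup\beta\cup\omega$ on $H^2(M;\RR)$: by Poincar\'e duality $L_\omega\alpha=0$ holds if and only if $Q(\alpha,\beta)=0$ for all $\beta$, so $\ker L_\omega=\operatorname{rad}Q$ and $\dim\ker L_\omega=b_2-\operatorname{rk}Q$. Since $Q(\omega,\omega)=\int_M\omega^3\neq 0$, the class $\omega$ never lies in $\ker L_\omega$, so $Q$ is never identically zero; and if $M$ were K\"ahler, or merely satisfied hard Lefschetz in degree two, then $L_\omega\colon H^2\to H^4$ would be an isomorphism and the kernel would be trivial, hence of even dimension $0$. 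Thus the entire content of the statement is to produce a simply connected symplectic $6$-manifold for which hard Lefschetz \emph{fails} in degree two, and fails by an \emph{odd} amount, so that $b_2-\operatorname{rk}Q$ is odd. It is this parity, not the mere failure of hard Lefschetz, that is the point (and that later forces $b_3$ of the associated $S^1$-bundle to be odd).

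The plan is to build $(M,\omega)$ by symplectic surgery. First I would fix an explicit closed symplectic $6$-manifold $N$ that is \emph{not} simply connected but whose rational cohomology ring is completely computable --- for instance a symplectic nilmanifold or solvmanifold, where invariant forms furnish a finite-dimensional model of the de Rham algebra --- and in which a direct inspection of the cup-product table exhibits $L_\omega\colon H^2(N)\to H^4(N)$ with an odd-dimensional kernel. The non-simply connected model serves only as a reservoir of controlled cohomology; to reach simple connectivity I would invoke Donaldson's hyperplane theorem to realize suitable multiples of $[\omega]$ (and of auxiliary integral classes) by smooth symplectic submanifolds, and then apply Gompf's symplectic fiber sum along codimension-two symplectic submanifolds, gluing in simply connected pieces so as to kill $\pi_1$ by van Kampen while keeping the total space symplectic. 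Throughout, $H^2$, $H^4$ and the form $Q$ are to be tracked by a Mayer--Vietoris computation across the summing locus. (Alternatively, one may start directly from a simply connected symplectic non-Lefschetz $6$-manifold in the literature and verify the parity; the essential requirements are a computable ring and an odd Lefschetz defect.)

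The verification then splits into three checks: that the output $M$ is symplectic (automatic, since a Gompf sum of symplectic manifolds along a codimension-two symplectic submanifold is symplectic); that $\pi_1(M)=1$ (van Kampen, using that the glued-in caps are simply connected and that the meridians of the summing submanifold become nullhomotopic); and that $\dim\ker L_\omega$ is odd. The main obstacle is this last point. Killing the fundamental group necessarily modifies $H^2$, $H^4$ and the cup product, and hence the radical of $Q$; the delicate part is to arrange the classes dualized by Donaldson's theorem and the choice of summing submanifolds so that $\operatorname{rk}Q$ changes by an \emph{even} amount relative to $b_2$, thereby transporting the odd parity of $\dim\ker L_\omega$ from the building block $N$ to $M$. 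Equivalently, one must certify that no surgery in the sequence secretly restores hard Lefschetz or shifts the defect of $Q$ by an odd number. This coordinated parity bookkeeping, rather than any single topological input, is where the real work lies.
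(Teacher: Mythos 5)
There is a genuine gap: your argument correctly isolates the difficulty (controlling the \emph{parity} of $\dim\ker L_\omega$, not just the failure of hard Lefschetz), but then leaves exactly that point undone, conceding that the ``coordinated parity bookkeeping \ldots is where the real work lies.'' A Gompf fiber sum used to kill $\pi_1$ changes $H^2$, $H^4$ and the cup product in ways that are not a priori parity-controlled, so nothing in your plan guarantees that the odd defect of the nilmanifold model survives to the simply connected output. The missing ingredient is a surgery whose effect on $\dim\ker L_\omega$ is quantified exactly, and that is what the paper supplies: Cavalcanti's lemma states that if $a\in\ker L_\omega$ is nonzero and $S\hookrightarrow M$ is a symplectic surface with $\la a,[S]\ra\neq 0$, then the symplectic blow-up of $M$ along $S$ decreases $\dim\ker L_\omega$ by precisely $1$ while preserving $\pi_1$. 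With this parity-switching move in hand, the paper inverts your strategy: it starts from Gompf's \emph{already simply connected} $6$-dimensional symplectic manifold violating hard Lefschetz (so no $\pi_1$-killing surgery is ever needed), and if the kernel happens to be even-dimensional it performs one blow-up to make it odd.

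The remaining work in the paper is then to produce the surface $S$, and this is where Donaldson--Auroux enters --- not, as in your sketch, to dualize classes for a fiber sum. One picks $b\in H^4(M,\ZZ)$ with $a\cup b\neq 0$, realizes $2b$ as $c_2(E)$ of a rank $2$ complex bundle with $c_1(E)=0$, and takes the zero locus of an asymptotically holomorphic section of $E\otimes L^{\otimes k}$ for $k\gg 0$, where $c_1(L)=[\omega]$; this is a symplectic surface with $[S]=2b+2k[\omega]$, and since $a\cup[\omega]=0$ one gets $\la a,[S]\ra=2\la a,b\ra\neq 0$. I would suggest restructuring your argument around this blow-up mechanism; as written, the proposal is a plausible program rather than a proof, because the step you yourself flag as essential is not carried out.
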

\noindent {\it Sketch of proof of the Proposition}.
Gompf  constructs in \cite[Theorem 7.1]{G1}  an example of a simply connected $6$-dimensional symplectic manifold $(M,\omega)$
which does not satisfy the hard Lefschetz property, that is,
the Lefschetz map $L_\omega: H^2(M)\rightarrow H^4(M)$ is not an isomorphism. 
If $\dim \ker L_\omega$ is already odd then we have finished. 

So let us suppose that $\dim \ker L_\omega$ is even. Take  a 
cohomology class $a\in H^2(M)$ which belongs to the kernel of   $L_\omega$.
In \cite[Lemma 2.4]{C} Cavalcanti proves that given a symplectic manifold $(M,\omega)$ as above satisfying that there
exists a symplectic surface $S\hookrightarrow M$ with $\la a,[S]\ra \neq 0$, then there is another $6$-dimensional 
symplectic manifold $(M',\omega')$ (the symplectic blow-up of $M$ along $S$) satisfying
 $$
 \dim \ker (L_{\omega'}: H^2(M')\rightarrow H^2(M'))= \dim \ker (L_\omega: H^2(M)\rightarrow H^2(M))-1.
 $$ 
The symplectic blow-up construction  of $M$ along $S$ is known \cite{TO}. It is proved that 
the fundamental groups $\pi_1(M')\cong \pi_1(M)$ (here M' denotes the blown-up manifold).

Hence $M'$ is simply connected.
This means that the simply connected $6$-dimensional symplectic manifold $M'$ satisfies that

$\dim \ker (L_{\omega'} : H^2(M')\rightarrow H^4(M'))$ is odd, as required.

It remains to find $S\hookrightarrow M$ as required. The cohomology class $a$ is non-zero, so there
is some $b\in H^4(M,\ZZ)$ such that $a\cup b\neq 0$. It is easy to see that there is a
rank $2$ complex vector bundle $E\to M$ with $c_1(E)=0$, $c_2(E)=2b$. This corresponds
to the fact that the map $[M,B\SU(2)]\to H^4(M,\ZZ)$ given by the second Chern class exhausts
$2\,  H^4(M,\ZZ)$ (we omit the proof of this fact, refering to \cite{MT1}). 

Now take the rank $2$ bundle $E \to M$ just constructed. Assume that $[\omega]$ is a
an integral cohomology class (which can always be done by perturbing $\omega$ slightly
to make it rational and multiplying it by a large integer). Let $L\to M$ be the
line bundle with first Chern class $c_1(L)=[\omega]$. We now use
the asymptotically holomorphic techniques introduced by Donaldson in \cite{Donaldson}.
Specifically, the result of \cite{Auroux} guarantees the existence of a suitable
large $k\gg 0$ and a section of $E\ox L^{\ox k}$ whose zero locus is a symplectic
manifold (an asymptotically holomorphic manifold in fact). This zero locus $S\subset M$
is a symplectic surface, and the cohomology class defined by $S$ is $c_2(E\otimes L^{\ox k})=
c_2(E)+2k c_1(L)=2b+2k[\omega]$.
Therefore $\la a,[S]\ra =\la a, 2b+2k[\omega] \ra=2\la a,b\ra \neq 0$, as required.

\hfill$\square$

We will call the manifold produced in Proposition \ref{prop:GC} the {\it Gompf-Cavalcanti manifold},
because it is constructed by the surgery technique of Gompf \cite{G1} together with the
symplectic blow-up of Cavalcanti \cite{C}. Note however that this is not a unique one but a 
family of manifolds.

\noindent {\it Completion of proof}. We show the existence of simply connected compact K-contact non-Sasakian manifolds in 
dimension $7$ by proving that the Boothby-Wang 
fibration over the Gompf-Cavalcanti manifold is K-contact but non-Sasakian. 

Let $(M,\omega)$ be a Gompf-Cavalcanti manifold as given by Proposition \ref{prop:GC}.
We can assume that $[\omega]$ is an integral cohomology class. Let
 \begin{equation}\label{eqn:M}
  S^1\rightarrow E\rightarrow M
 \end{equation}
be the associated Boothby-Wang fibration.  Now we need to prove that $E$ cannot carry Sasakian structures.

There is an exact sequence 
 $$
 H_2(M)\to H_1(S^1)={\mathbb Z}\to H_1(E) \to 0
 $$ 
from the Serre spectral sequence. 
The map $H_2(M)\to {\mathbb Z}$ is cupping with $[\omega]\in H^2(M)$. Taking $[\omega]$ integral
cohomology class and primitive, we have that $H_2(M)\to {\mathbb Z}$ is surjective and
hence $H_1(E)=0$. Using the  long homotopy 
exact sequence and The Gysin exact sequence one obtains that $\pi_1(E)$ is abelian.
Therefore $E$ is simply connected and

 $$
 b^3(E)=b^3(M)+ \dim (\ker L_\omega :H^2(M)  \to H^4(M)).
 $$

As $M$ is a $6$-manifold, we have that $b^3(M)$ is even (by Poincar\'e duality, the intersection
pairing on $H^3(M)$ is an antisymmetric non-degenerate bilinear form, hence the dimension of
$H^3(M)$ is even). By construction,
$\dim (\ker L_\omega :H^2(M) \to H^4(M))$ is odd, so $b^3(E)$ is odd.
As the third Betti number of a $7$-dimensional Sasakian manifold has to be even (Theorem \ref{rel:betti-basic-numbers}), 
we have that $E$ cannot admit a Sasakian structure.
\vskip6pt
\hfill$\square$

\begin{remark} In the context of works \cite{BBMT,BFMT, MT1} one should also mention \cite{CNY, CNMY} devoted to the analogues of the hard Lefschetz property in the Sasakian framework.
\end{remark}

\subsection{Dimension 5}
This is the most difficult case. The solution to the problem of the existence of simply connected K-contact non-Sasakian manifolds in dimension 5 was a result of a series of works \cite{CMRV, M, M1, MRT} and required further  development of the theory of Seifert fibrations over K\"ahler or sympplectic cyclic 4-orbifolds with various types of singularities. This was described in Section 3.  Therefore, we just formulate the main result obtained by V. Mu\~noz.
\begin{theorem}[\cite{M1}] There exists a Smale-Barden manifold admitting K-contact but not admitting Sasakian structures.
\end{theorem}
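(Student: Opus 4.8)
The plan is to exploit the orbifold dichotomy set up in Section~\ref{sec:seifert}: by Proposition~\ref{prop:quasi-regular} together with Theorem~\ref{thm:sas-orb} and its K-contact analogue, a simply connected compact $5$-manifold carries a K-contact (resp.\ Sasakian) structure exactly when it is the total space of a Seifert bundle over some cyclic \emph{symplectic} (resp.\ \emph{K\"ahler}) $4$-orbifold. So it suffices to produce a Smale-Barden manifold $M$ that admits a Seifert presentation over a cyclic symplectic orbifold but over \emph{no} cyclic K\"ahler orbifold. The guiding idea is that the symplectic category is far more flexible than the complex one: symplectic isotropy surfaces of prescribed genus in a prescribed homology class can be manufactured at will, whereas the genus of a smooth complex curve $D$ on a projective surface is pinned down by the adjunction formula $2g(D)-2=D^2+K\cdot D$ and is thereby tightly constrained by the intersection theory of the base.

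First I would construct the K-contact side. Following Theorem~\ref{thm:munoz}, I would assemble orbifold data $(X,P,D_i,m_i)$ consisting of a cyclic symplectic $4$-orbifold $X$ with isolated singular set $P$ and embedded \emph{symplectic} isotropy surfaces $D_i$ of pairwise coprime multiplicities $m_i$, intersecting nicely; crucially, symplectic surgery and Donaldson-type arguments (\cite{G1,Auroux,GS}) let the $D_i$ be taken of any prescribed genera $g_i$ in suitable homology classes, a freedom unavailable in the complex category. Choosing compatible local invariants and a first Chern class as in the symplectic analogue of Proposition~\ref{prop:c1} produces a Seifert bundle $M\to X$ whose total space is K-contact by the symplectic counterpart of Theorem~\ref{thm:sas-orb} (see \cite{M}). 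Simple connectivity is then checked in two stages: $H_1(M,\ZZ)=0$ via conditions (1)--(3) of Theorem~\ref{thm:H1(M)}, and $\pi_1(M)=1$ via the orbifold fundamental-group sequence of Subsection~\ref{subsec:fund} once $\pi_1^{\orb}(X)$ has been shown abelian by a Nori-type argument (Proposition~\ref{prop:nori}). The same theorem records $H_2(M,\ZZ)=\ZZ^k\oplus\bigoplus_i\ZZ_{m_i}^{2g_i}$, and the genera $g_i$ are selected so that this torsion is symplectically realizable yet, as the next step shows, beyond the reach of any K\"ahler base.

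Next I would rule out Sasakian structures. Suppose $M$ carried one; by Proposition~\ref{prop:quasi-regular} it would carry a quasi-regular Sasakian structure, so by Theorem~\ref{thm:sas-orb} $M$ would be a Seifert bundle over a cyclic K\"ahler orbifold $X'$, i.e.\ a normal projective surface with cyclic quotient singularities. Since the torsion of $H_2(M,\ZZ)$ is a topological invariant, Theorem~\ref{thm:H1(M)} forces $X'$ to carry isotropy \emph{complex} curves $D_i'$ whose genera reproduce the summands $\ZZ_{m_i}^{2g_i}$, while $b_2(X')=k+1$ is fixed. The contradiction must then be extracted from the algebraic geometry of $X'$: the adjunction formula determines each $g(D_i')$ from $[D_i']$ and $K_{X'}$, and the Hodge index theorem, the positivity of effective curves, and the coprimality of the multiplicities together bound how many curves of the prescribed genera can coexist on a surface with the given $b_2$. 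Choosing the $g_i$ and the number of summands so as to overshoot this bound makes the Sasakian presentation impossible.

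The main obstacle is exactly this last step: one must defeat \emph{every} conceivable quasi-regular Sasakian presentation of $M$, that is, every cyclic K\"ahler orbifold base and every Seifert structure over it, not merely the symplectic one just built. This requires a numerical invariant of the torsion-plus-$b_2$ data that is insensitive to the choice of presentation yet provably realizable in the symplectic but not the K\"ahler category, together with the sharp genus-and-configuration bound for projective surfaces that it violates; isolating such an invariant and proving that bound is where the real content of \cite{M1} lies. Everything else---the symplectic orbifold construction, the simple-connectivity verification, and the homology computation---is by now routine within the framework of Section~\ref{sec:seifert}.
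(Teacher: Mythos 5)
The survey itself gives no proof of this theorem---it explicitly states ``we just formulate the main result obtained by V.~Mu\~noz'' and defers entirely to \cite{M1}---so there is no argument in the paper to match step by step. Your outline does reproduce the strategy actually followed in \cite{M1} and its companions \cite{CMRV, MRT}: build the K-contact side as a Seifert bundle over a cyclic symplectic $4$-orbifold using the flexibility of symplectic isotropy surfaces of prescribed genus (Theorem~\ref{thm:munoz}, Proposition~\ref{prop:c1}, Theorem~\ref{thm:H1(M)}, Proposition~\ref{prop:nori}), then rule out every quasi-regular Sasakian presentation by showing that no cyclic K\"ahler orbifold base can carry the complex-curve configuration that the topology of $M$ forces. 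That identification of the correct dichotomy is worth something.

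But as a proof the proposal has a genuine gap, one you flag yourself: the entire mathematical content lies in the step you defer. Two things are missing concretely. First, the invariant that does the work in \cite{M1} is not an abstract ``genus-and-configuration bound'' but a specific count: the symplectic orbifold is arranged to contain $b_2(X)$ pairwise disjoint isotropy surfaces of positive genus (with coprime multiplicities), while on the K\"ahler side one proves that a complex surface with cyclic quotient singularities and $b_1=0$ cannot contain that many disjoint complex curves of genus $\geq 1$---the Hodge index theorem allows at most one disjoint curve of positive self-intersection, and a separate analysis disposes of curves of non-positive self-intersection. Saying ``adjunction plus Hodge index should bound things'' is a direction, not an argument; without the precise bound the contradiction never materializes. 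Second, your reduction assumes that the torsion $\bigoplus_i\ZZ_{m_i}^{2g_i}$ of $H_2(M,\ZZ)$ forces \emph{any} hypothetical K\"ahler presentation to have isotropy curves with exactly the genera $g_i$ and exactly that many positive-genus components. Theorem~\ref{thm:H1(M)} only computes $H_2$ \emph{from} a given presentation; recovering the number and genera of isotropy surfaces of an arbitrary unknown presentation from the abstract isomorphism type of the torsion group requires an argument (the summands could a priori regroup), and this is handled carefully in \cite{M1,MRT} but not addressed in your sketch. Until both points are supplied the Sasakian exclusion does not close.
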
 
\vskip6pt
\noindent {\bf Acknowledgement. } This article is an extended verison of the talk given at the conference {\it Differential geometric structures and applications} held at the University of  Haifa in May 2023. The author is grateful to the organizers for the pleasant and creative atmosphere.
The author was supported by the National Science Center, Poland, grant no. 2018/31/B/ST1/00053.


\begin{thebibliography}{999}


\bibitem{Auroux} \textsc{D. Auroux},
{\it Asymptotically holomorphic families of symplectic submanifolds}, Geom. Funct. Anal. 7 (1997) 971-995.



\bibitem{BPV} \textsc{W. Barth, K. Hulek,  C. Peters, A. Van de Ven}, {\it Compact Complex Surfaces}, Springer, 2004.

\bibitem{BBMT} \textsc{G. Bazzoni, I. Biswas, M. Fern\'andez, V. Mu\~noz, A. Tralle}, {\it Homotopic properties of Kaehler orbifolds},   in: S. G. Chiossi et. al (eds.) {\it Special Metrics and Group Actions in Geometry}, Springer INdAM Series 23,  2017, pp. 23-57.

\bibitem{BFMT} \textsc{I. Biswas, M. Fern\'andez, V. Mu\~noz, A. Tralle},  
{\it On formality of Sasakian manifolds}, J. Topology 9(2016), 161-180


\bibitem{BG} \textsc{C. Boyer, K. Galicki}, {\it Sasakian Geometry}, Oxford, 2007.

\bibitem{BGK} \textsc{C. Boyer, K. Galicki, J. Koll\'ar}, {\it Einstein metrics on spheres}, Ann. Math. 162 (2005), 557-580.

\bibitem{BGM} \textsc{C. Boyer, K. Galicki, P. Matzeu}, {\it On $\eta$-Einstein Sasakian geometry}, Commun. Math. Phys. 262 (2006), 177-208

\bibitem{BGM1} \textsc{C.P. Boyer, K. Galicki, B. Mann},
{\it The geometry and topology of 3-Sasakian manifolds}, J. reine angew. Math. 455(1994), 183-220

\bibitem{BGN} \textsc{C. Boyer, K. Galicki, M. Nakamaye}, {\it On positive Sasakian geometry}, Geom. Dedicata 101 (2003), 93-102.

\bibitem{BN} \textsc{C. Boyer, M. Nakamaye}, {\it On Sasaki-Einstein manifolds in dimension five}, Geom. Dedicata 144 (2010), 141-156.

\bibitem{BW} \textsc{W. M. Boothby, H.C. Wang}, {\it On contact manifolds}, Annals Math. 68(1958), 721-734

\bibitem{CNY} \textsc{B. Cappelletti-Montano, A. de Nicola, I. Yudin}, {\it Hard Lefschetz theorem
for Sasakian manifolds}, J. Diff. Geom. 101(2015), 47-66

\bibitem{CNMY} \textsc{B. Cappelletti-Montano, A. de Nicola, J.C. Marrero, I. Yudin}, {\it Examples of
compact K-contact manifolds with no Sasakian metric}, Intenat. J. Geom. Methods Phys. 11(2014), art. no. 1460028.

\bibitem{C} \textsc{G. Cavalcanti},
{\it The Lefschetz property, formality and blowing-up in
symplectic geometry}, Trans. Amer. Math. Soc. 359(2007), 333-348.

\bibitem{CFL}
\textsc{L.A. Cordero, M. Fern\'andez and M. de Le\'on}, Examples of compact almost
contact manifolds admitting Sasakian and no cosymplectic structures, {\em Atti
Sem. Mat. Fis. Univ. Modena} {\bf 34} (1985), 43--54.


\bibitem{CFM}\textsc{ G. Cavalcanti, M. Fern\'andez, V. Mu\~noz},
{\it Symplectic resolutions, Lefschetz property and formality},
Advances Math. 218(2008), 576-599.

\bibitem{CMST} \textsc{A. Ca\~nas, V. Mu\~noz, M. Sch\"utt, A. Tralle}, {\it Quasi-regular Sasakian and K-contact structures on Smale-Barden manifolds}, Rev. Matem. Iberoam. 38(2022), 1029-1050

\bibitem{CMRV} \textsc{A. Ca\~nas, V. Mu\~noz, J. Rojo, A. Viruel}, {\it A K-contact simply connected 5-manifold with no semi-regular Sasakian structure}, Publ. Math. 65(2021), 615-651.


\bibitem{DGMS} 
\textsc{P. Deligne, P. Griffiths, J. Morgan and D. Sullivan},
{\it Real homotopy theory of K\"ahler manifolds,}
 Invent. Math.  29 (1975), 245--274.

\bibitem{Donaldson} \textsc{S. Donaldson},
{\it Symplectic submanifolds and almost-complex geometry}, J. Diff. Geom. 44 (1996) 666-705.
\bibitem{EK} \textsc{A. El Kacimi Alaoui}, {\it Operateurs transversalement
elliptiques sur un feuilletage riemannien et applications}, Compositio Math. 73(1990), 57-106

\bibitem{FOT} \textsc{Y. Felix, J. Oprea, D. Tanr\'e,} 
 {\it Algebraic Models in Geometry}, Oxford Univ. Press, 2008.

\bibitem{FT} \textsc{Y. Felix, J.-C. Thomas, S. Halperin},
{\it Rational homotopy theory}, Springer, Berlin, 2002

\bibitem{G} \textsc{R. G\'omez}, {\it A note on Smale manifolds and Lorentzian Sasaki-Einstein geometry}, Bull. Math. Soc. Sci. Roumainie 59 (2016), 151-158

\bibitem{G1}\textsc{ R. Gompf}, A new construction of symplectic manifolds,
Annals Math. 142(1995), 527-595

\bibitem{GS}\textsc{ R. Gompf, A. Stipsicz}, {\it $4$-Manifolds and Kirby Calculus}, AMS, 2004.



\bibitem{HT} \textsc{B. Hajduk,  A. Tralle}, {\it On simply connected compact
$K$-contact non-Sasakian manifolds}, 
 J. Fixed Point Theory Appl.  16 (2014), 229--241.


\bibitem{Halperin}
\textsc{S. Halperin}, {\it Lectures on minimal models\/},
M\'em. Soc. Math. France {\bf 230}, 1983.

\bibitem{Ham} \textsc{M. Hamilton}, {\it On symplectic $4$-manifolds and contact $5$-manifolds}, Ph. D. Thesis, University of Munich, 2008.

\bibitem{H} \textsc{R. Hartshorne}, {\it Algebraic Geometry}, Springer, 1977.



\bibitem{Hattori}
\textsc{A. Hattori}, {\it Spectral sequences in the de Rham cohomology of fibre bundles,} 
J. Fac. Sci. Univ. Tokyo Sect. IV, 8 (1960), 289--331.


\bibitem{Kob} \textsc{S. Kobayashi}, {\it Principal fibre bundles with the
1-dimensional toroidal group}, T\^ohoku Math. J. 8(1956), 29--45.

\bibitem {K}\textsc{ J. Koll\'ar}, {\it Einstein metrics on $5$-dimensional Seifert bundles}, J. Geom. Anal. 15 (2005), 445-476.

\bibitem{K1} \textsc{J. Koll\'ar}, {\it Circle actions on simply connected 5-manifolds}, Topology 45 (2006), 643-672.



\bibitem{L2} \textsc{E. Lerman}, {\it Contact fiber bundles}, J. Geom. Phys. 49(2004), 52-66


\bibitem{McD} \textsc{D. McDuff, D. Salamon}, {\it Introduction to Symplectic Topology}, Oxford Univ. Press, 1998
\bibitem{Miranda}
\textsc{R. Miranda}, \emph{The moduli of Weierstrass fibrations over $\PP^1$}, Math Ann.\ { 255} (1981), 379-394.


\bibitem{M} \textsc{V. Mu\~noz}, {\it Gompf connected sum for orbifolds and K-contact Smale-Barden manifolds}, Forum Mathematicum, 34(2022), 197-223

\bibitem{M1} \textsc{V. Mu\~noz}, {\it A Smale-Barden manifold admitting K-contact but not Sasakian structure}, Arxiv: 2011.05783

\bibitem{MRT} \textsc{V. Mu\~noz, J. A. Rojo, A. Tralle}, {\it Homology Smale-Barden manifolds with K-contact and Sasakian structures}, 
Internat. Math. Res. Notices IMRN 2020, (2020), 7397-7432.

\bibitem{MT}\textsc{ V. Mu\~noz, A. Tralle}, {\it On the classification of Smale-Barden manifolds with Sasakian structures},
Comm.\ Contemporary Math., 24(2022), art. no. 2150077

\bibitem{MT1} \textsc{V. Mu\~noz, A. Tralle}, {\it Simply connected K-contact and Sasakian manifolds in dimension 7}, Math. Z. 281(2015), 457-470.

\bibitem{MST1} \textsc{V. Mu\~noz, M. Sch\"utt, A. Tralle}, {\it Negative Sasakian structures on simply connected 5-manifolds}, Math. Res. Lett. 29(2022), 1827-1857.




\bibitem{N} \textsc{M. Nori}, {\it Zariski's conjecture and related problems}, Annales Sci. l'E.N.S. 4e s\'erie, 16 (1983), 305-344.



\bibitem{PV}\textsc{ D. S. Patra, V. Rovenski}, {\it On the rigidity of the Sasakian structure and characterization of cosymplectic manifolds}, ArXiv: 2203.04597
\bibitem{PW} \textsc{J. Park, J. Won}, {\it Simply connected Sasaki-Einstein rational homology spheres},
Duke Math. J.\ 170 (2021), 1085-1112.

\bibitem{R} \textsc{R. Rukimbira}, {\it Chern-Hamilton conjecture and K-contactness}, Houston J. Math. 21 (1995), 709-718.

\bibitem{SH} 
\textsc{S. Sasaki and Y. Hatakeyama}, {\it On differentiable manifolds with certain
structures which are closely related to almost contact structure II,}
T\^{o}hoku Math. J.13 (1961), 281--294.



\bibitem{T} \textsc{M. Teicher}, {\it Hirzebruch surfaces: degenerations and  related braid monodromy}, Contemp. Math. 241(1999), 305-326.

\bibitem{Tievsky} \textsc{A. Tievsky}, {\em Analogues of K\"ahler geometry on Sasakian manifolds},
Ph.D. Thesis, MIT, 2008.

\bibitem{TO} \textsc{A. Tralle, J. Oprea}, {\it Symplectic manifolds with
no K\"ahler structure}, Springer, Berlin, 1997.

\bibitem{WZ} \textsc{Wang Z. Z. and D. Zafran}, {\it A remark on the hard
Lefschetz theorem for K\"ahler orbifolds}, Proc. Amer. Math. Soc. 137(2009), 2497-2501








\end{thebibliography}
\end{document}